\newtheorem{thm}{Theorem}[section]
\newtheorem{cor}[thm]{Corollary}
\newtheorem{lem}[thm]{Lemma}
\newtheorem{prop}[thm]{Proposition}
\theoremstyle{definition}
\newtheorem{defn}[thm]{Definition}
\newtheorem{rem}[thm]{Remark}
\newtheorem{ex}[thm]{Example}
\newtheorem{exm}[thm]{Example}
 \numberwithin{equation}{section}
\newcommand\A{\mathcal{A}}
\newcommand\E{\mathcal{E}}
\newcommand\F{\mathbb{F}}
\newcommand\N{\mathbb{N}}
\newcommand\R{{\mathbb R}}
\newcommand\C{\mathbb{C}}
\newcommand\K{\mathcal{K}}
\newcommand\B{\mathcal{B}}
\newcommand\Lm{\mathcal{L}}
\newcommand\la{\lambda}
\newcommand{\e}{\varepsilon}
\newcommand{\tl}{\widetilde{\lambda}}
\def\ds{\displaystyle}
\begin{document}
\title{Measurability, Spectral Densities and Hypertraces\\ in Noncommutative Geometry}
\author{Fabio E.G. Cipriani, Jean-Luc Sauvageot}
\address{(F.E.G.C.) Politecnico di Milano, Dipartimento di Matematica,
piazza Leonardo da Vinci 32, 20133 Milano, Italy.} \email{fabio.cipriani@polimi.it}
\address{Institut de Math\'ematiques, de Jussieu -- Paris Rive Gauche, CNRS-UniversitÃ© Denis Diderot, F-75205 Paris Cedex 13, France}
\email{jean-luc.sauvageot@imj-prg.fr}
\footnote{This work has been supported by Laboratoire Ypatia des Sciences Math\'ematiques C.N.R.S. France - Laboratorio Ypatia delle Scienze Matematiche I.N.D.A.M. Italy (LYSM).}
\date{November 30, 2021}
\subjclass{58B34, 47L20}
\begin{abstract}
We introduce, in the dual Macaev ideal of compact operators of a Hilbert space, the spectral weight $\rho(L)$ of a positive, self-adjoint operator $L$ having discrete spectrum away from zero. We provide criteria for its measurability and unitarity of its Dixmier traces ($\rho(L)$ is then called spectral density) in terms of the growth of the spectral multiplicities of $L$ or in terms of the asymptotic continuity of the eigenvalue counting function $N_L$. Existence of meromorphic extensions and residues of the $\zeta$-function $\zeta_L$ of a spectral density are provided under summability conditions on spectral multiplicities. The hypertrace property of the states $\Omega_L(\cdot)={\rm Tr\,}_\omega (\cdot\rho(L))$ on the norm closure of the Lipschitz algebra $\A_L$ follows if the relative multiplicities of $L$ vanish faster than its spectral gaps or if $N_L$ is asymptotically regular.

\end{abstract}
\maketitle

\section{Introduction}


Trace theorems for unbounded Fredholm modules $(\A,h,D)$, alias K-cycles or spectral triples,
subject to various summability behaviours, date back to the dawning of Noncommutative Geometry \cite{Co1}, \cite{Co2}, \cite{Co3}. They were proved under finite summability in \cite{Co2} and hold true also under summability in the dual Macaev ideal in \cite{CGS}.
They were used to deduce hyperfinitness of weak closure of the $^*$-algebra $\A$ in certain representations and to rule out the existence of unbounded Fredholm modules or quasidiagonal approximate units in normed ideals, with specific summability conditions (see \cite{Co2}, \cite{V}). Also, an hypertrace constructed by
$(\A,h,D)$ provides a Hilbert bimodule, unitary representation of the universal graded differential algebra $\Omega^*(\A)$ (\cite{Co3} Chapter 6.1 Proposition 5).\\
Here we associate a {\it spectral weight} $\rho(|D|)$ in the dual Macaev ideal $\Lm^{(1,\infty)}(h)$, to any unbounded Fredholm module $(\A,h,D)$ and, more in general, to any filtration $\mathcal{F}$ of a Hilbert space $h$ (in the sense of \cite{V}). The spectral weight $\rho(|D|)$ depends, in particular, on the spectral multiplicities of $D$ but not on the location of its eigenvalues.\\
Under an assumption of non exponential growth of the filtration, we show measurability of $\rho(|D|)$ and under the asymptotic continuity of the eigenvalue counting function $N_{|D|}$,  we prove also the unitarity ${\rm Tr\,}_\omega(\rho(|D|))=1$ of the Dixmier traces. In this situation $\rho(|D|)$ is called the {\it spectral density} of $D$ and one may deal with the {\it volume states} $\Omega_{|D|}(a):={\rm Tr}_\omega (a\cdot \rho(|D|))$ on the norm closure C$^*$-algebra $A$ of $\A$, provided by any fixed Dixmier ultrafilter $\omega$.\\
In commutative terms, i.e. dealing with the standard spectral triple $(C^\infty(M), L^2(Cl(M)), D)$ of a compact, closed,  Riemannian manifold, taking into account  multiplicities only and not the whole spectrum itself, one reconstructs the Riemann probability measure of $M$ loosing information about the volume $V(M)$ and the dimension $d(M)$. On the other hand, this has the advantage to dispense with summability hypotheses and, for example, to recover the unique trace on the reduced C$^*$-algebra $C^*(\Gamma)$ of a finitely generated, countable discrete group $\Gamma$, no matter its growth is. Also, using the density $\rho(|D|)$, one is able to treat, on the same foot, situations like Euclidean domains of infinite volume whose Dirichlet-Laplacian has discrete spectrum or certain hypoelliptic $\Psi$DO on compact manifolds, where the asymptotics of the spectrum of $D$ is not à la Weyl.\\
Under summability conditions on the spectral multiplicities of $|D|$, the $\zeta$-function $\zeta_{|D|}$ of the spectral density $\rho(|D|)$ is shown to be meromorphic on an half plane containing $z=1$ and that its residue is there unitary.\\
Finally, we show that the volume states $\Omega_{|D|}$ are hypertraces on $A$ provided $N_{|D|}\sim\varphi$ for a locally Lipschitz function 
$\varphi$ such that the essential limit of $\varphi'/\varphi$ vanishes at infinity. This condition is satisfied when the sequence of {\it relative multiplicities} of $|D|$ vanish faster than the sequence of its {\it spectral gaps}.\vskip0.2truecm\noindent
The work is organized as follows. In Section 2 we introduce the spectral weight $\rho (L)$ of a positive, self-adjoint operator $L$ having discrete spectrum away from zero. Its measurability is proved in terms of the sub-exponential growth of its spectral multiplicities, as a consequence of the asymptotic continuity of the counting function $N_L$ or as a byproduct of the nuclearity of the semigroup $e^{-tL}$. In Section 3 we prove existence of analytic extensions and residues of the $\zeta$-function $\zeta_L$ of a density $\rho(L)$, in terms of summability of the multiplicities of $L$. In Section 4, the volume states $\Omega_L(\cdot)={\rm Tr}_\omega(\cdot \rho(L))$ are introduced and in Section 6 we show that they are hypertraces on the Lipschitz algebra $\A_L$, under asymptotic smoothness of the counting function $N_L$ or when the relative multiplicities vanish faster than the spectral gaps of $L$. Section 5 is dedicated to various examples concerning i) k-cycles on compact manifolds given by (hypo)elliptic $\Psi$DO $L$ ii) k-cycles on the C$^*$-algebra $\mathcal{P}(M)$ of scalar, $0$-order $\Psi$DO, which are associated to scalar, $1$-order $\Psi$DO $L$ iii) k-cycles associated to multiplication operators on the group C$^*$-algebra of countable discrete groups iv) Dirichlet-Laplacians of Euclidean domains of infinite volume v) Kigami-Laplacians on selfmilar fractals vi) the Toeplitz C$^*$-algebra generated by an isometry and the canonical multiplication operator $L$ on natural $\N$ and prime numbers $\mathbb{P}$ vii) unbounded Fredholm modules built using Hilbert space filtrations. In the final subsection 6.5, the structure of the volume states $\Omega_L$ on C$^*$-algebras extensions is briefly outlined.

\section{Measurable densities associated to operators with discrete spectrum}
In this section we define the spectral weight of a nonnegative, self-adjoint operator $(L,D(L))$ on a Hilbert space $h$, having discrete spectrum away from zero and investigate its measurability in the framework of Connes' NCG. We always keep in mind the situation where $L=|D|$ for a spectral triple $(\A,D,h)$.
\subsection{Eigenvalue counting function and multiplicities}
In this section, we consider a densely defined, nonnegative, unbounded, self-adjoint operator $(L,D(L)$ on a Hilbert space $h$ with spectrum $sp(L)$ and spectral measure E$^L$.\\
Letting $P_0:=E^L(\{0\})$ be the orthogonal projection onto the kernel of $L$, we fix the following notations of functional calculus: by convention, for a measurable function $f\,:\,\R_+\to \R_+$, the operator $f(L)$ will be $0$ on the subspace $P_0(h)={\rm ker\,}L$ and $f\big(L(I-P_0)\big)$ on the subspace $(I-P_0)h=({\rm ker\,}L)^\perp$. For example, with this convention, for $s>0$, $L^{-s}$ is the nonnegative, densely defined operator defined as $0$ on $P_0(h)$ and $\big(L(I-P_0)\big)^{-s}$ on $(I-P_0)h$.
\vskip0.2truecm\noindent
In this section we shall suppose that $L$ has {\it discrete spectrum off of its kernel} in the sense that $sp(L)\setminus\{0\}$ is discrete: this is equivalent to say that $L^{-1}$ is a compact operator in $\B(h)$.
\vskip0.2truecm\noindent
We shall adopt two alternative ways for describing the spectrum, out of its kernel:\\
{\it First way}: $sp(L)\backslash \{0\}=\{0<\lambda_1(L)\leq \cdots \leq \lambda_n(L)\leq \cdots\}$
\vskip0.1truecm\noindent
where the eigenvalues $\la_n(L)$ are numbered increasingly with repetition according to their multiplicity.
\vskip0.1truecm\noindent
{\it Second way}: $sp(L)\backslash \{0\}=\{0<\tl_1(L)<\cdots <\tl_k(L)<\cdots\}$\\
where the distinct eigenvalues $\tl_k(L)$ are numbered increasingly.
\vskip0.2truecm\noindent
Since $L$ is assumed unbounded, we have $\lim_{n\to\infty}\lambda_n(L)=\lim_{k\to\infty}\tl_k(L)=+\infty$.

The {\it multiplicity} of the eigenvalue $\tl_k(L)$ is denoted by $m_k:={\rm Tr}(E^L(\{\tl_k\}))$ while the {\it cumulated multiplicity} is defined as $M_k:={\rm Tr}(E^L((0,\tl_k])$ so that $M_k=N_L(\tl_k(L))=\sum_{j=1}^k m_j$. By convention, $M_0:=0$. We will refer to the ratio $m_k/M_k$ as the {\it relative multiplicity} of the eigenvalue $\tl_k(L)$. The two labellings correspond through the relation:
\[
\la_n(L)=\tl_k(L),\qquad M_{k-1}<n\leq M_k.
\]
\begin{rem}
We shall adopt the simplified notations $\lambda_1,\cdots ,\lambda_n,\cdots$ and $\tl_1,\cdots, \tl_k,\cdots$ whenever no confusion can arise.
\end{rem}
The {\it eigenvalue counting function} $N_L:\R_+\to\mathbb{N}$ is defined as
\[
N_L(x):={\rm Tr}(E^L((0,x]))=\sharp\{n\in \N^*\,|\,\la_n(L)\leq x\}
\]
where $\rm Tr$ is the normal, semifinite trace on $B(h)$.
\vskip0.1truecm
Let us summarize some basic properties of the counting function
\begin{lem}\label{Fk}
i) $N_L$ is a non decreasing function, right continuous with left limits. For $x\in \R_+$, let us denote $N^-_L(x)=\lim_{\delta\downarrow 0} N_L(x-\delta)$ the left limit function of $N_L$.
\vskip0.2truecm\noindent
ii) $N_L(x)=M_k$ for $\tl_k(L) \leq x < \tl_{k+1}(L)$.
\vskip0.2truecm\noindent
iii) $\ds\limsup_{x\to +\infty}\frac{N_L(x)}{N^-_L(x)}=\limsup_{k\to \infty} \frac{M_k}{M_{k-1}}$\,.
\end{lem}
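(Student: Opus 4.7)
The three parts are largely formal consequences of the discreteness of $sp(L)\setminus\{0\}$, so my plan is to set up a single ``staircase'' description of $N_L$ and read everything off it.

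For part (i), the plan is to invoke monotonicity and $\sigma$-additivity of the spectral measure $E^L$. Since $(0,x]\subseteq(0,y]$ whenever $x\le y$, the projections $E^L((0,x])$ are ordered, giving monotonicity of $N_L$. Right continuity will come from $\bigcap_{\delta>0}(0,x+\delta]=(0,x]$, upon noting that for any fixed $x$ there exists $\delta_0>0$ so small that $E^L((x,x+\delta_0])=0$ (because $sp(L)\setminus\{0\}$ is discrete), hence $N_L$ is actually locally constant to the right of $x$. Existence of left limits follows from monotonicity.

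For part (ii), I will use the discreteness of the spectrum to decompose the spectral projection as an orthogonal sum. For $\tilde\lambda_k\le x<\tilde\lambda_{k+1}$ the intersection $(0,x]\cap sp(L)=\{\tilde\lambda_1,\dots,\tilde\lambda_k\}$, so
\[
E^L((0,x])=\sum_{j=1}^{k}E^L(\{\tilde\lambda_j\}),
\]
and taking traces gives $N_L(x)=\sum_{j=1}^{k}m_j=M_k$.

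Part (iii) is the only non-formal point, but it is immediate once (ii) is in hand. From (ii), $N_L$ is a step function that is constant on $[\tilde\lambda_k,\tilde\lambda_{k+1})$ with jumps precisely at the points $\tilde\lambda_k$. Consequently, for $x\in(\tilde\lambda_k,\tilde\lambda_{k+1})$ one has $N_L^-(x)=N_L(x)=M_k$, so $N_L(x)/N_L^-(x)=1$, while at a jump point $x=\tilde\lambda_k$ one has $N_L^-(\tilde\lambda_k)=M_{k-1}$ and $N_L(\tilde\lambda_k)=M_k$, so the ratio equals $M_k/M_{k-1}\ge 1$. Thus the function $x\mapsto N_L(x)/N_L^-(x)$ takes the value $1$ off the discrete set $\{\tilde\lambda_k\}_{k\ge1}$ and the value $M_k/M_{k-1}$ at $\tilde\lambda_k$. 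Since the latter are the only values $\ge 1$ and since $\tilde\lambda_k\to\infty$, the $\limsup$ as $x\to+\infty$ of the ratio coincides with $\limsup_{k\to\infty}M_k/M_{k-1}$.

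No real obstacle is anticipated: the only subtlety is to justify that the $\limsup$ as $x\to+\infty$ is realised along the subsequence of jump points $\tilde\lambda_k$, which is ensured precisely because the ratio is identically $1$ elsewhere and the jump ratios are always $\ge 1$.
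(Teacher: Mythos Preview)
Your proposal is correct and follows essentially the same approach as the paper, which dismisses (i) and (ii) as obvious from the definition and, for (iii), makes precisely your observation that $N_L^-(x)=N_L(x)$ for $x\notin sp(L)$ while $N_L(\tilde\lambda_k)=M_k$ and $N_L^-(\tilde\lambda_k)=M_{k-1}$. Your write-up is simply more detailed, and your closing remark about why the $\limsup$ is realised along the jump points is a welcome clarification that the paper leaves implicit.
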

\begin{proof} Properties i) and ii) are obvious from the definition of $N_L$. For iii), it is enough to observe that, for $x\not\in sp(L)$, $N^-_L(x)=N_L(x)$ and that, for $x=\tl_k(L)$, $N_L(x)=M_k$ while $N^-_L(x)=M_{k-1}$.
\end{proof}

\subsection{Spectral weights}

\begin{defn}(Spectral weights)
The operator defined as
\[
\rho(L):=N_L(L)^{-1}
\]
will be called the {\it spectral weight of} $L$. As $sp(L)\setminus\{0\}$ is discrete and unbounded and $N_L$ is nondecreasing, it follows that $\rho(L)$ is nonnegative and compact.
\end{defn}

\begin{prop}\label{TrFLn}
i) The eigenvalues of the spectral weight are given by
\[
\mu_n(\rho(L))=N_L(\lambda_n(L))^{-1}=\frac{1}{M_k}\qquad {\it for}\qquad M_{k-1}<n\leq M_k\qquad {\it and}\qquad k\ge 1;
\]
ii) we also have the bounds
\begin{equation}\label{la}
\frac{M_{k-1}}{M_{k}}\cdot\frac{1}{n} < \mu_n(\rho(L))\leq \frac{1}{n}\qquad {\rm for}\quad M_{k-1}<n\leq M_k \qquad {\it and}\qquad k\ge 1.
\end{equation}

\end{prop}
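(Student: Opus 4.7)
The plan is to diagonalize $L$ on $(\ker L)^{\perp}$ using the spectral theorem and then read off the singular values of $\rho(L)=N_L(L)^{-1}$ by functional calculus. Because $sp(L)\setminus\{0\}$ is discrete, each distinct eigenvalue $\tl_k$ of $L$ corresponds to a finite-dimensional spectral subspace $E^L(\{\tl_k\})h$ of dimension $m_k$; together with the convention extending functional calculus by $0$ on $\ker L$, the operator $N_L(L)$ acts on that subspace as multiplication by $N_L(\tl_k)$, which equals $M_k$ by Lemma~\ref{Fk}(ii). Hence $\rho(L)$ is diagonal with eigenvalue $1/M_k$ of multiplicity $m_k=M_k-M_{k-1}$ on the $k$-th eigenspace, and $0$ on $\ker L$.

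To deduce (i) I would then list the nonzero eigenvalues of the compact operator $\rho(L)$ in decreasing order, as required to define the $\mu_n$. Since $(M_k)_{k\ge 1}$ is strictly increasing, the values $1/M_k$ are strictly decreasing in $k$, each appearing with multiplicity $m_k$. Starting the enumeration at $n=1$, the value $1/M_k$ occupies precisely the positions $n$ with $M_{k-1}<n\le M_k$, which yields $\mu_n(\rho(L))=1/M_k=N_L(\lambda_n(L))^{-1}$ (the second equality being the index translation recorded in Section 2.1).

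For the bounds in (ii), I would work directly inside the interval $M_{k-1}<n\le M_k$. The upper bound $\mu_n(\rho(L))=1/M_k\le 1/n$ is immediate from $n\le M_k$. For the lower bound, rearranging reduces the claim to $M_{k-1}/n<1$: for $k\ge 2$ this follows from $n>M_{k-1}\ge 1$, while for $k=1$ the convention $M_0=0$ reduces the inequality to $0<1/M_1$. There is no genuine obstacle here; the whole statement is a translation, via functional calculus, of the elementary relations between $N_L$, the $\tl_k$ and the $M_k$ recorded in Lemma~\ref{Fk}, followed by bookkeeping on indices.
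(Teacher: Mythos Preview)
Your proof is correct and follows essentially the same approach as the paper: both obtain (i) from Lemma~\ref{Fk} via functional calculus (reading off that $\rho(L)$ acts as $1/M_k$ on the $\tl_k$-eigenspace), and both derive the two bounds in (ii) directly from the inequalities $n\le M_k$ and $n>M_{k-1}$. Your version is simply more explicit, in particular in handling the boundary case $k=1$ with $M_0=0$.
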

{\it Proof.} For i), apply Lemma \ref{Fk}. For ii), notice that for $n$ as considered, we have $\la_n(L)=\tl_k(L)$ and $N_L(\la_n(L)) = N_L(\tl_k(L))=M_k$. From one side, $M_k\geq n$ and thus $N_L(\la_n(L))\geq n$. On the other side, $n>M_{k-1}$ and thus $\ds N_L(\la_n(L))^{-1}=M_k^{-1} > \frac{1}{n}\cdot\frac{M_{k-1}}{M_k}$\,.\hfill $\square$

\subsection{Weights by filtrations}
For any Borel measurable, strictly increasing function $f:\R_+\to \R_+$ such that $\lim_{x\to \infty} f(x)=+\infty$, we have $\rho(f(L))=\rho(L)$.
In fact $\rho(L)$ depends on $L$ only through the Hilbert space filtration of spectral subspaces of $h$
\[
\{E^L((0,\tl_k(L)])\}_{k=1}^{+\infty}.
\]
In \cite{V} Proposition 5.1, D.V. Voiculescu, motivated by the existence of quasicentral approximate units relative to normed ideals, provided a general construction of spectral triples $(\A, h, D)$ on a C$^*$-algebra $A$, represented in a  Hilbert space $h$, associated to given filtrations $h_0\subset h_1\subset \cdots\subset h$.
He consider the filtration of $A$ given by $V_k:=\{T\in A: T(h_j)\cup T^*(h_j)\subseteq h_{j+k},\,\, \forall\,\, j\in\N\}$ for $k\in\N$, assuming that $\A:=\bigcup_{k\in\N}V_k$ is dense in $A$. Denoting by $P_j$ the projection onto $h_j$, the Dirac operator is defined as $D:=\sum_{j\ge 1}(I-P_j)$ (so that $D=|D|$). The spectrum of $D$ is $\N$ with cumulated multiplicities $M_j:={\rm dim\,}(h_j)$ and the spectral weight is given by
\[
\rho(D)=\sum_{k \ge 1}^{+\infty} \frac{1}{M_k}\cdot (P_{k+1}-P_k).
\]

\medskip \subsection{Dixmier traces.}\label{DixmierTraces} In this work we shall consider the non normal traces associated with ultrafilters on $\N$, introduced by J. Dixmier \cite{Dix}, making use, in particular, of the approach proposed in \cite{CPS}.

We shall start with a state $\omega$ on $L^\infty(\R_+^*)$ having all the properties of \cite{CPS} Theorem 1.5.

First, $\omega$ is a limit process at $+\infty$ in the sense that
\begin{equation*} ess\, \liminf_{t\to +\infty} f(t)\leq \omega(f) \leq ess\,\limsup_{t\to +\infty} f(t)\quad f\in L^\infty(\R_+^*)\, \tag{1}
\end{equation*}
so that we can write as well $\omega(f):=\omega-\lim_{t\to +\infty} f(t)$\,.

Then we require this limit process satisfies the following invariance properties :
\begin{itemize}
\item[(2)] $\ds\omega-\lim_{t\to +\infty} f(st)=\omega-\lim_{t\to +\infty} f(t)$, $f\in L^\infty(\R_+^*)$, $s\in \R_+^*$
\item[(3)] $\ds\omega-\lim_{t\to +\infty} f(t^s)=\omega-\lim_{t\to +\infty} f(t)$,  $f\in L^\infty(\R_+^*)$, $s\in \R_+^*$
\item[(4)] $\ds \omega-\lim_{t\to \infty} {\ds\frac{1}{\rm{Log}(t)}\int_1^t f(s)\frac{ds}{s}}=\omega-\lim_{t\to +\infty}  f(t)$,  $f\in L^\infty(\R_+^*)$\,.
\end{itemize}

To such $\omega$ are associated an ultrafilter on $\N$, still denoted $\omega$, by \\
\centerline{$\ds\omega-\lim_{n\to \infty} f(n)=\omega-\lim_{t\to +\infty} f([t])\quad f\in \ell^\infty(\N) \text{ with }[t]=\text{integer part of }t\,.$
}

The associated Dixmier Trace is defined on  the dual Macaev ideal of compact operators
\[
\Lm^{(1,\infty)}(h):=\{T\in \mathcal{K}(h):\sup_{N\ge 2}\frac{1}{{\rm Log} N}\cdot\sum_{n=1}^N\mu_n(|T|)<+\infty\}
\]
as
\begin{equation}\label{Tromega}
Tr_\omega(T)=\omega-\lim_{N\to \infty} \frac{1}{{\rm Log} N}\sum_{n=1}^N \mu_n(T)\qquad T\in \Lm^{(1,\infty)}(h)_+\, .
\end{equation}
The operator $T$ is said to be {\it measurable} if its Dixmier trace $Tr_\omega\big(\rho(L)\big)$ is independent upon the Dixmier ultrafilter $\omega$. Conditions ensuring measurability can be given in terms of Cesaro means (see Proposition 6 Chapter 4.2.$\beta$ in \cite{Co2}).

\medskip To $\omega$ on $L^\infty(\R_+^*)$ as above is associated an alternative
 limit process  $\widetilde \omega$ on $\R$ defined as \\
\centerline{$ \widetilde \omega -\lim_{t\to +\infty} f(t)=\omega-\lim_{t\to +\infty} f\big(\rm{Log}(t)\big)\quad f\in L^\infty(\R)\,.$}

The asymptotic behaviour of $\ds \frac{1}{{\rm Log} N}\sum_{n=1}^N \mu_n(T)$ and the limit behaviour of $\ds(s-1)Tr(T^s)$ as $s\to 1^+$ are related by the following equality [CPS, Theorem 3.1]
\begin{equation} \label{CPS31}
Tr_\omega(T)=\widetilde \omega-\lim_{r\to +\infty} \frac{1}{r}Tr\big(T^{1+\frac{1}{r}}\big)\qquad T\in \Lm^{(1,\infty)}(h)_+\, .
\end{equation}
Furthermore with $T$ as above and any $A\in \B(h)$, we have (see \cite{CPS} Theorem 3.8)
\begin{equation} \label{CPS38}
Tr_\omega(A\,T)=\widetilde \omega-\lim_{r\to +\infty} \frac{1}{r}Tr\big(A\,T^{1+\frac{1}{r}}\big)\qquad T\in \Lm^{(1,\infty)}(h)_+\, .
\end{equation}

\medskip \subsection{Asymptotics for the spectral weights.}\-
\begin{prop}\label{asymptotics}
i) The spectral weight belongs to the dual Macaev ideal $\Lm^{(1,\infty)}(h)$ and
\[
\frac{1}{c}\le Tr_\omega\big(\rho(L)\big)\leq 1\qquad {\it for\,\, all\,\, Dixmier\,\,ultrafilter\,\,}\omega
\]
where
\[
c:=\limsup_{x\to+\infty} \frac{N_L(x)}{N^-_L(x)}=\limsup_{k\to\infty}\frac{M_k}{M_{k-1}}.
\]
ii) $\rho(L)^s$ is trace class for all $s>1$ and $\limsup_{s\downarrow 1} (s-1)Tr(\rho(L)^s)\leq 1$. \\
iii) If
\[
\limsup_{x\to+\infty} \frac{N_L(x)}{N^-_L(x)}=\lim_{k\to\infty} \frac{M_{k}}{M_{k-1}}=1
\]
then we have
\begin{itemize}
\item[iii.a)] $\mu_n(\rho(L))\sim 1/n$ as $n\to \infty$.
\vskip0.1truecm\noindent
\item[iii.b)] $Tr_\omega(\rho(L))=1$ for all Dixmier ultrafilter $\omega$ and the spectral weight $\rho(L)$ is measurable.
\vskip0.1truecm\noindent
 \item[iii.c)] $\lim_{s\downarrow 1} (s-1)Tr(\rho(L)^s)=1$.
\end{itemize}
\end{prop}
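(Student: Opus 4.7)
My plan is to push everything through the eigenvalue estimates recorded in Proposition~\ref{TrFLn}, namely $\frac{M_{k-1}}{M_k}\cdot\frac{1}{n}<\mu_n(\rho(L))\leq\frac{1}{n}$ for $M_{k-1}<n\leq M_k$, and then to invoke the defining formula \eqref{Tromega} for $Tr_\omega$ together with the Cesaro/averaging axioms $(1)$--$(4)$ of the Dixmier state $\omega$.

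For part (i), the upper estimate $\mu_n(\rho(L))\leq 1/n$ gives $\frac{1}{\mathrm{Log}\,N}\sum_{n=1}^N\mu_n(\rho(L))\leq \frac{1}{\mathrm{Log}\,N}\sum_{n=1}^N\frac{1}{n}\to 1$, which simultaneously shows $\rho(L)\in\Lm^{(1,\infty)}(h)$ and yields $Tr_\omega(\rho(L))\leq 1$ from \eqref{Tromega}. For the lower bound, if $c<\infty$, pick any $\e>0$; by the limsup characterisation of $c$ in Lemma~\ref{Fk}(iii), there exists $k_0$ such that $M_{k-1}/M_k\geq 1/(c+\e)$ for all $k\geq k_0$. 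Hence $\mu_n(\rho(L))>\frac{1}{(c+\e)n}$ once $n>M_{k_0-1}$, so $\sum_{n=1}^N\mu_n(\rho(L))\geq \frac{1}{c+\e}\sum_{n=1}^N\frac{1}{n}+O(1)$, giving $\liminf_N\frac{1}{\mathrm{Log}\,N}\sum_{n=1}^N\mu_n(\rho(L))\geq \frac{1}{c+\e}$. Property $(1)$ then forces $Tr_\omega(\rho(L))\geq 1/(c+\e)$; letting $\e\downarrow 0$ closes the argument (and the case $c=+\infty$ is vacuous).

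Part (ii) is immediate from $\mu_n(\rho(L))\leq 1/n$: for $s>1$, $Tr(\rho(L)^s)=\sum_n\mu_n(\rho(L))^s\leq \zeta(s)<\infty$, and since $(s-1)\zeta(s)\to 1$ as $s\downarrow 1$, the $\limsup$ bound follows.

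For part (iii), assume $c=1$. Then the pinching $\frac{M_{k-1}}{M_k}\cdot\frac{1}{n}<\mu_n(\rho(L))\leq\frac{1}{n}$ together with $M_{k-1}/M_k\to 1$ immediately yields $n\mu_n(\rho(L))\to 1$, i.e. (iii.a). For (iii.b), I would apply a weighted Cesaro lemma: if $a_n\to a$ then $\bigl(\sum_{n=1}^N a_n/n\bigr)/\bigl(\sum_{n=1}^N 1/n\bigr)\to a$; applying it to $a_n=n\mu_n(\rho(L))$ gives $\frac{1}{\mathrm{Log}\,N}\sum_{n=1}^N\mu_n(\rho(L))\to 1$, an ordinary limit, so property $(1)$ of $\omega$ forces $Tr_\omega(\rho(L))=1$ independently of $\omega$, proving measurability. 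For (iii.c), given any $\e>0$ choose $n_0$ with $(1-\e)/n\leq\mu_n(\rho(L))\leq(1+\e)/n$ for $n\geq n_0$; then for $s>1$ one sandwiches $Tr(\rho(L)^s)$ between $(1\pm\e)^s\sum_{n\geq n_0}n^{-s}+O(1)$, multiplies by $s-1$, lets $s\downarrow 1$ using $(s-1)\zeta(s)\to 1$, and then $\e\to 0$. Alternatively, (iii.c) can be deduced directly from (iii.b) via the equality \eqref{CPS31}, since the right-hand side there is expressed through $\widetilde\omega$-limits which, once $Tr_\omega(\rho(L))$ is independent of $\omega$, force $(s-1)Tr(\rho(L)^s)$ to have a genuine limit equal to $1$.

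The only slightly delicate point is the lower bound in (i): one has to convert the $\limsup$ definition of $c$ into a \emph{uniform} eventual lower bound for $M_{k-1}/M_k$ and then control the partial sum for \emph{all} $N$, not only $N=M_k$; the payoff is that the harmonic remainder absorbs the finitely many small terms, which is exactly why passing through property~$(1)$ of $\omega$ rather than trying a direct Dixmier-trace computation is the cleanest route.
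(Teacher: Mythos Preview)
Your proposal is correct and follows essentially the same route as the paper: both arguments rest entirely on the double inequality \eqref{la} of Proposition~\ref{TrFLn}, deducing (i) and (ii) from $\mu_n(\rho(L))\le 1/n$ together with the eventual lower bound $\mu_n(\rho(L))\ge 1/((c+\e)n)$, and reading off (iii.a)--(iii.c) as direct consequences of $n\mu_n(\rho(L))\to 1$. The paper's proof is in fact a one-line sketch of exactly these steps; your write-up simply makes explicit the Ces\`aro-type averaging and the $\zeta$-function sandwich that the paper leaves as ``straightforward consequences''.
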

\begin{proof}
i) follows from inequality $\mu_n(\rho(L))\leq 1/n$ of Proposition \ref{TrFLn} and from inequality $\liminf_{n\to \infty} \mu_n(\rho(L)) \geq 1/cn$ which follows by (\ref{la}). ii) follows again from inequality $\mu_n(\rho(L))\leq 1/n$ of Proposition \ref{TrFLn}. iii.a) comes from the double inequality (\ref{la}), while iii.b) and iii.c) are straightforward consequences of iii.a).
\end{proof}

\begin{defn}(Spectral densities)
The spectral weight $\rho(L)$ will be called {\it spectral density} provided it is measurable and ${\rm Tr}_\omega(\rho(L))=1$ for all Dixmier ultrafilter $\omega$.
\end{defn}

We may have measurability of a spectral weight even if it is not a density:

\begin{prop}\label{lim}
If $\ds \lim_{k\to\infty} \frac{M_{k+1}}{M_k}=c>1$, then the spectral weight $\rho(L)$ is measurable and
\[
Tr_\omega(\rho(L))= \lim_{s\downarrow 1}(s-1) Tr\big(\rho(L)^s\big)=
 \displaystyle\frac{c-1}{c\,\rm{Log}\, c}\qquad \text{for all Dixmier ultrafilter}\quad \omega\,.
\]
\end{prop}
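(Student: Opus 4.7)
I aim to show that both quantities in the claimed chain of equalities are genuine (ultrafilter-independent) limits equal to $\frac{c-1}{c\log c}$.

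\emph{Step 1 (Logarithmic asymptotics).} Taking logarithms of the hypothesis yields $\log(M_{k+1}/M_k)\to\log c$, so by Cesaro--Stolz
\[
\frac{\log M_k}{k} = \frac{1}{k}\sum_{j=1}^{k-1}\log\frac{M_{j+1}}{M_j} + \frac{\log M_1}{k} \longrightarrow \log c.
\]
Set $\beta_k := m_k/M_k = 1 - M_{k-1}/M_k$; by hypothesis $\beta_k\to(c-1)/c$.

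\emph{Step 2 (Cesaro average and measurability).} By Proposition \ref{TrFLn}(i), for $M_{k-1}<N\le M_k$,
\[
\sum_{n=1}^N\mu_n(\rho(L)) = \sum_{j=1}^{k-1}\beta_j + \frac{N-M_{k-1}}{M_k},
\]
the last summand belonging to $[0,1]$. Cesaro's theorem gives $k^{-1}\sum_{j=1}^{k-1}\beta_j\to(c-1)/c$, and by Step 1, $\log N\sim k\log c$. Dividing produces the classical limit
\[
\lim_{N\to\infty}\frac{1}{\log N}\sum_{n=1}^N\mu_n(\rho(L)) = \frac{c-1}{c\log c},
\]
which by definition of the Dixmier trace proves measurability of $\rho(L)$ and gives the value $Tr_\omega(\rho(L))=\frac{c-1}{c\log c}$ for every Dixmier ultrafilter $\omega$.

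\emph{Step 3 (Zeta residue).} Proposition \ref{asymptotics}(ii) ensures $\rho(L)^s$ is trace class for $s>1$, and
\[
Tr(\rho(L)^s) = \sum_{k\ge 1} m_k M_k^{-s} = \sum_{k\ge 1}\beta_k M_k^{1-s}.
\]
Fix $\delta\in(0,\log c)$ and choose $K$ so large that $|\beta_k-\tfrac{c-1}{c}|<\delta$ and $k(\log c-\delta)\le\log M_k\le k(\log c+\delta)$ for $k\ge K$. The head $\sum_{k<K}$ is a bounded finite sum, so $(s-1)$ times it vanishes as $s\downarrow 1$. For the tail one has the sandwich
\[
\Bigl(\tfrac{c-1}{c}-\delta\Bigr)\sum_{k\ge K}e^{-(s-1)(\log c+\delta)k} \le \sum_{k\ge K}\beta_k M_k^{1-s} \le \Bigl(\tfrac{c-1}{c}+\delta\Bigr)\sum_{k\ge K}e^{-(s-1)(\log c-\delta)k},
\]
and each geometric sum is asymptotic to $[(s-1)(\log c\mp\delta)]^{-1}$ as $s\downarrow 1$. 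Thus $(s-1)Tr(\rho(L)^s)$ lies, in the limit, between $\frac{(c-1)/c-\delta}{\log c+\delta}$ and $\frac{(c-1)/c+\delta}{\log c-\delta}$; sending $\delta\to 0$ yields the claimed residue.

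\emph{Main obstacle.} The subtlety is that the hypothesis $M_{k+1}/M_k\to c$ gives only the logarithmic precision $\log M_k = k\log c+o(k)$, not a sharp equivalent $M_k\sim Ac^k$. Consequently the zeta computation in Step 3 cannot be reduced to a single geometric series but must be sandwiched between two with slightly perturbed ratios; the delicate point is checking that the sandwich closes after multiplication by $s-1$ and passage to $\delta\to 0$. Consistency with the identity (\ref{CPS31}) is then automatic, since both one-parameter limits coincide with $\frac{c-1}{c\log c}$.
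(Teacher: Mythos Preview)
Your proof is correct and follows essentially the same sandwich strategy as the paper: bound $M_k$ between two geometric sequences with ratios $c\pm\varepsilon$, split the relevant sums into a bounded head and a controlled tail, and let $\varepsilon\to 0$. The only notable difference is cosmetic: where the paper works with the explicit inequalities $(c-\varepsilon)^{k-K}M_K\le M_k\le(c+\varepsilon)^{k-K}M_K$ throughout, you invoke Ces\`aro--Stolz once to get $\log M_k\sim k\log c$ and then Ces\`aro for $\frac{1}{k}\sum_{j<k}\beta_j$, which streamlines Step~2 considerably. Your explicit acknowledgment in the ``Main obstacle'' paragraph that only $\log M_k=k\log c+o(k)$ is available (not $M_k\sim Ac^k$) is exactly the right observation and is handled correctly in Step~3 by the perturbed geometric sandwich.
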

\begin{proof}
 Fix $0<\e<c-1$ and $K\in\mathbb{N}$ such that $(c-\e)M_k\leq M_{k+1} \leq (c+\e)M_k$ for $k\geq K$. This implies $(c-\e)^{k-K}M_K\leq M_k \leq (c+\e)^{k-K}M_K$ for $k>K$. For $N>M_K$, let $k(N)$ be the integer such that $M_{k(N)}\leq N \leq M_{k(N)+1}$. One has
 \[
 \rm{Log\,} M_{k(N)}\leq\rm{Log\,} N \leq \rm{Log\,} (M_{k(N)+1})\leq \rm{Log\,} M_{k(N)}+\rm{Log\,} (c+\e)
 \]
 so that $\rm{Log\,} N=\rm{Log\,} M_{k(N)}+O(1)$ as $N\to +\infty$. Gathering those two results, we get
\begin{equation} \label{LogN}
k(N)\,\rm{Log}(c-\e)+O(1)\leq \rm{Log} N \leq k(N)\,\rm{Log}(c+\e)+O(1),\qquad N\to +\infty\,.
\end{equation}

Moreover
\begin{equation}\begin{split}
\sum_{n=1}^N N_L(\lambda_n)^{-1}&=\sum_{k=1}^{K-1}\sum_{n=M_{k-1}}^{M_k}N_L(\lambda_n)^{-1}+ \sum_{k=K}^{k(N)}\sum_{n=M_{k-1}+1}^{M_k}N_L(\lambda_n)^{-1}+\sum_{n=M_{k(N)+1}}^N N_L(\lambda_n)^{-1} \\
&=\sum_{k=1}^{K-1}\sum_{n=M_{k-1}}^{M_k}N_L(\lambda_n)^{-1}+ \sum_{k=K}^{k(n)}\frac{M_k-M_{k-1}}{M_k}+ \frac{N-M_{k(N)}}{M_{k(N)+1}}
\end{split}\end{equation}
For $k\geq K$, we have $\ds \frac{c+\e-1}{c+\e} \leq \frac{M_k-M_{k-1}}{M_k} \leq \frac{c-\e-1}{c-\e}$, which provides
\begin{equation}
k(N) \frac{c+\e-1}{c+\e} +O(1) \leq \sum_{n=1}^N N_L(\lambda_n)^{-1} \leq k(N) \leq \frac{c-\e-1}{c-\e}+O(1)\,,\;N\to +\infty.
\end{equation}
With (\ref{LogN}), this inequality implies
\begin{equation*}\begin{split}
\frac{1}{k(N)\,\rm{Log}(c+\e)+O(1)}&\Big(k(N) \frac{c+\e-1}{c+\e} +O(1) \,\Big) \\
\leq &\frac{1}{\rm{Log} N} \sum_{n=1}^N N_L(\lambda_n)^{-1} \\ \leq
&\frac{1}{k(N)\,\rm{Log}(c-\e)+O(1)}\Big(k(N) \frac{c-\e-1}{c-\e} +O(1) \,\Big)
\end{split}\end{equation*}
which provides the first equality $\ds Tr_\omega(\rho(L))=\frac{c-1}{c}$.

\medskip The second equality $\ds  \lim_{s\downarrow 1} Tr\big(\rho(L)^s\big)=
 \displaystyle\frac{c-1}{c\,\rm{Log} c}$ is obtained through a similar computation. Fix $\e>0$ and $K\in \N$ such that $k\geq K\Rightarrow c-\e \leq M_{k+1}/M_k \leq c+\e$, hence $(c-\e)^{k-K}M_K\leq M_k \leq (c+\e)^{k-K}M_K$ for $k>K$. We have also, fot $k\geq K+1$, $\ds c-\e \leq \frac{M_k}{M_{k-1}}=\frac{m_k}{M_{k-1}}+1\leq c+\e$, i.e.
 $\ds c-1-\e \leq \frac{m_k}{M_{k-1}}\leq c+\e$ which implies
 $$\frac{c-1-\e}{c+\e}\leq \frac{m_k}{M_k}\leq \frac{c-1+\e}{c-\e}\,.$$
 We compute now for $s >1$
 \begin{equation*} \begin{split}
 Tr(\rho(L)^s)=\sum_k m_k M_k^{-s}=\sum_{k=1}^{K} m_k M_k^{-s}+\sum_{k=K+1}^\infty m_kM_k^{-s}= \sum_{k=1}^{K} m_k M_k^{-s}+\sum_{k=K+1}^\infty \frac{m_k}{M_k}M_k^{-s+1}
 \end{split} \end{equation*}
 We have $\ds (s-1)\sum_{k=1}^{K} m_k M_k^{-s}\to 0$ as $s\to 1$, while
 \begin{equation*}\begin{split}
\sum_{k=K+1}^\infty \frac{m_k}{M_k}M_k^{-s+1}&\leq \frac{c-1+\e}{c-\e}(c+\e)^{(-s+1)(k-K)}M_K^{-s+1} \\
&=\frac{c-1+\e}{c-\e}(c+\e)^{(-s+1)(-K)}M_K^{-s+1}\frac{1}{1-(c+\e)^{1-s}}
 \end{split}\end{equation*}
 with $1-(c+\e)^{1-s}\sim (s-1)\rm{Log}(c+\e)$ as $s\downarrow 1$.\\
 We have proved $\ds \limsup_{s\downarrow 1} (s-1)Tr(\rho(L)^s)\leq \frac{c-1+\e}{c-\e}\frac{1}{\rm{Log}(c+\e)}$.
A similar computation provides $\ds \liminf_{s\downarrow 1} (s-1)Tr(\rho(L)^s)\geq \frac{c-1-\e}{c+\e}\frac{1}{\rm{Log}(c-\e)}$ and the result.
\end{proof}
The hypothesis of the above result is verified in discrete free groups (see Example \ref{exfreegroup} below).
\medskip
Here is another criterium for $\rho(L)$ having a nonzero Dixmier Trace.
\begin{prop} If $M_k\sim f(k)$ ($k\to+\infty$) with $f\in C^1((0,+\infty))$, then
$$\limsup_{k\to \infty} \frac{M_{k+1}}{M_k}\leq e^C\;\text{ with } C:=\limsup_{x\to +\infty} \frac{f'(x)}{f(x)}\,.$$
Hence $Tr_\omega(\rho(L))\geq e^{-C}$ for all Dixmier ultrafilters $\omega$.
\end{prop}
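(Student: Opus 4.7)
My plan is to reduce the statement about $M_{k+1}/M_k$ to a statement about $f(k+1)/f(k)$ using the asymptotic $M_k \sim f(k)$, and then control the latter via the logarithmic derivative $f'/f$ by an application of the mean value / fundamental theorem of calculus. The deduction of the bound on $\mathrm{Tr}_\omega(\rho(L))$ is then an immediate appeal to Proposition~\ref{asymptotics}(i).

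Here are the steps in order. First, if $C = +\infty$ the statement $\limsup_k M_{k+1}/M_k \le e^C$ is vacuous, so fix any $C' > C$ (with $C' < \infty$); by definition of $\limsup$ there exists $X > 0$ such that $f'(x)/f(x) \le C'$ for all $x \ge X$. Second, since $f \in C^1((0,+\infty))$ and $f$ is eventually positive (as $M_k \to \infty$ and $M_k \sim f(k)$), the function $\log f$ is differentiable on $[X,\infty)$ and
\[
\log f(k+1) - \log f(k) \;=\; \int_k^{k+1} \frac{f'(x)}{f(x)}\,dx \;\le\; C'
\]
for all integers $k \ge X$, so $f(k+1)/f(k) \le e^{C'}$. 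Third, write
\[
\frac{M_{k+1}}{M_k} \;=\; \frac{M_{k+1}}{f(k+1)} \cdot \frac{f(k+1)}{f(k)} \cdot \frac{f(k)}{M_k},
\]
so passing to $\limsup$ and using $M_k/f(k)\to 1$ yields $\limsup_k M_{k+1}/M_k \le e^{C'}$. Since $C' > C$ was arbitrary, I conclude $\limsup_k M_{k+1}/M_k \le e^{C}$.

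Finally, by Proposition~\ref{asymptotics}(i) we have $\mathrm{Tr}_\omega(\rho(L)) \ge 1/c$ for every Dixmier ultrafilter $\omega$, where $c = \limsup_k M_k/M_{k-1}$. A shift of index leaves $\limsup$ invariant, so $c = \limsup_k M_{k+1}/M_k \le e^C$, which gives $\mathrm{Tr}_\omega(\rho(L)) \ge e^{-C}$.

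I do not anticipate a real obstacle here: the only mild subtleties are (a) making sure $f > 0$ eventually before taking $\log f$, which is automatic from $M_k \to +\infty$ and $M_k \sim f(k)$; (b) handling the case $C = +\infty$ separately (trivial); and (c) checking that the passage from the ratio $f(k+1)/f(k)$ to $M_{k+1}/M_k$ really only costs factors tending to $1$, which is just the definition of $\sim$. The heart of the argument is the one-line identity $\log f(k+1)-\log f(k) = \int_k^{k+1} (f'/f)\,dx$.
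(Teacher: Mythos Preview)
Your proof is correct and follows essentially the same route as the paper: reduce $M_{k+1}/M_k$ to $f(k+1)/f(k)$ via the asymptotic $M_k\sim f(k)$, bound the latter by integrating the logarithmic derivative $(\log f)'=f'/f$ over $[k,k+1]$, and then invoke Proposition~\ref{asymptotics}(i). Your treatment is in fact slightly more careful (explicitly disposing of the case $C=+\infty$ and noting why $f>0$ eventually), but the argument is the same.
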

\begin{proof}
Fix $\e>0$ and choose $K_\e\ge 1$ such that for all $k\ge K_\e$ we have $\displaystyle \frac{1-\e}{1+\e}\,\frac{f(k+1)}{f(k)} \leq \frac{M_{k+1}}{M_k}\leq \frac{1+\e}{1-\e}\frac{f(k+1)}{f(k)}$. It follows that $\displaystyle \limsup_{k\to \infty} \frac{M_{k+1}}{M_k}= \limsup_{k\to \infty} \frac{f(k+1)}{f(k)}$. Then, setting $\displaystyle C=\limsup_{x\to +\infty} \frac{f'(x)}{f(x)}$, we have $\displaystyle (\rm{Log} f)'(x)= \frac{f'(x)}{f(x)}\leq C+\e$ for $x$ large enough so that\\
$\ds \rm{Log}(f(k+1))-\rm{Log}(f(k))\leq C+\e$, i.e.$\displaystyle \frac{f(k+1)}{f(k)}\leq e^{C+\e}$ for $k$ large enough too.
\end{proof}

\subsection{Asymptotic continuity of the eigenvalue counting function and measurability.}
Here we link the measurability of the spectral weight to the asymptotic continuity of the eigenvalue counting function and to the asymptotic vanishing of the relative multiplicity.
\begin{prop}\label{continuity}
\vskip0.2truecm\noindent
i) If the counting function $N_L$ is aymptotically continuous in the sense that there exists a continuous function $\varphi :\R_+\to \R_+$ such that
\[
N_L(x)\sim \varphi(x)\,,\;x\to +\infty\, ,
\]
then
\[
\lim_{x\to +\infty} \frac{N_L(x)}{N^-_L(x)}= \lim_{k\to \infty } \frac{M_{k+1}}{M_k}=1\quad\text{or, equivalently,}\quad  \lim_{k\to \infty } \frac{m_{k}}{M_k}=0.
\]
ii) Conversely, if $\ds \lim_{x\to +\infty} \frac{N_L(x)}{N^-_L(x)}=1$ or $\ds \lim_{k\to \infty } \frac{M_{k+1}}{M_k}=1$ or $ \ds \lim_{k\to \infty } \frac{m_{k}}{M_k}=0$, then
 $N_L$ is asymptotically continuous.
\vskip0.1truecm\noindent
In both cases, $\rho(L)$ is a density and the properties  iii) of Proposition \ref{asymptotics} hold true. \\

\end{prop}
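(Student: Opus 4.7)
The plan is to show that all three limit conditions in (ii) are equivalent to $\lim_{k\to\infty} M_{k+1}/M_k = 1$, and that the hypothesis of (i) also implies this same condition; the final assertions then follow directly from Proposition \ref{asymptotics} iii).

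For the equivalences among (a) $\lim_{x\to\infty} N_L(x)/N_L^-(x) = 1$, (b) $\lim_k M_{k+1}/M_k = 1$, and (c) $\lim_k m_k/M_k = 0$: Lemma \ref{Fk} iii) states that $\limsup_{x\to\infty} N_L(x)/N_L^-(x) = \limsup_k M_k/M_{k-1}$. Since $N_L(x)/N_L^-(x) \ge 1$ for all $x$ and $M_k/M_{k-1} \ge 1$ for all $k$, the corresponding liminfs are automatically $\ge 1$, so each of these quantities tends to $1$ iff its limsup equals $1$; this gives (a)$\Leftrightarrow$(b). The identity $m_k/M_k = 1 - M_{k-1}/M_k$ makes (b)$\Leftrightarrow$(c) immediate.

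For the forward direction (i), I would evaluate the asymptotic $N_L\sim\varphi$ at and just below each spectral jump. Fix $\e > 0$ and $X$ such that $(1-\e)\varphi(x) \le N_L(x) \le (1+\e)\varphi(x)$ for all $x > X$. For each $k$ with $\tl_k > X$, substituting $x = \tl_k$ yields $(1-\e)\varphi(\tl_k) \le M_k \le (1+\e)\varphi(\tl_k)$. For $y \in (\tl_{k-1}, \tl_k) \cap (X,+\infty)$ one has $N_L(y) = M_{k-1}$, hence $(1-\e)\varphi(y) \le M_{k-1} \le (1+\e)\varphi(y)$; letting $y \uparrow \tl_k$ and invoking the continuity of $\varphi$ at $\tl_k$, the same bounds hold with $\varphi(\tl_k)$ in place of $\varphi(y)$. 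Dividing the two inequalities gives $M_k/M_{k-1} \le (1+\e)/(1-\e)$, and since $\e > 0$ was arbitrary, $\lim_k M_k/M_{k-1} = 1$.

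For the converse (ii), assuming (b), I would construct an explicit continuous $\varphi$ by piecewise linear interpolation: setting $\tl_0 := 0$, on each interval $[\tl_k, \tl_{k+1}]$ define $\varphi$ to be affine with $\varphi(\tl_k) = M_k$. Then $\varphi$ is continuous and nondecreasing on $\R_+$. For $x \in [\tl_k, \tl_{k+1})$ one has $N_L(x) = M_k$ while $\varphi(x) \in [M_k, M_{k+1}]$, so $M_k/M_{k+1} \le N_L(x)/\varphi(x) \le 1$. Under hypothesis (b) the lower bound converges to $1$ as $k \to \infty$, whence $N_L(x)/\varphi(x) \to 1$ as $x \to \infty$. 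In both cases the hypothesis of Proposition \ref{asymptotics} iii) is thus verified, delivering the conclusion that $\rho(L)$ is a density and that properties iii.a)--iii.c) hold. The only delicate step is the passage to the left limit in (i), which hinges precisely on the continuity of $\varphi$ at the spectral points $\tl_k$.
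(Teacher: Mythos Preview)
Your proof is correct and follows essentially the same approach as the paper: for (i) you use the continuity of $\varphi$ to pass to the left limit at the jump points and bound $M_k/M_{k-1}$ by $(1+\e)/(1-\e)$, and for (ii) you construct the same piecewise affine interpolant through the points $(\tl_k,M_k)$ and bound the ratio $N_L/\varphi$ using $M_k/M_{k+1}\to 1$. Your treatment is in fact slightly more explicit than the paper's in spelling out the equivalences (a)$\Leftrightarrow$(b)$\Leftrightarrow$(c) via Lemma~\ref{Fk} iii) and the identity $m_k/M_k=1-M_{k-1}/M_k$.
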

\begin{proof}
i) For $\e>0$ and $x\in \R_+$ large enough, we have
\[
(1-\e)\varphi(x)\leq N_L(x)\leq (1+\e)\varphi(x).
\]
As $\varphi$ is continuous, we have as well $(1-\e)\varphi(x)\leq N^-_L(x)$ so that $\ds \frac{N_L(x)}{N^-_L(x)} \leq \frac{1+\e}{1-\e}$.
This implies $\ds \limsup_{x\to +\infty} \frac{N_L(x)}{N^-_L(x)} \leq \frac{1+\e}{1-\e}$ for all $\e>0$, and finally $\ds\limsup_{x\to +\infty} \frac{N_L(x)}{N^-_L(x)}=1$. Lemma \ref{Fk} and Proposition \ref{TrFLn} provides the result. \\
 ii) Choose $\varphi$ continuous, piecewise affine such that $\varphi(\tl_k)=M_k$. This means
\[
\varphi(x)=M_{k-1}+t\big(M_k-M_{k-1})
\]
for $ x\in [\,\widetilde \lambda_{k-1}, \widetilde \lambda_k\,]$ and $x= \widetilde \lambda_{k-1}+t \big(\widetilde \lambda_k-\widetilde \lambda_{k-1}\big)$, $0\leq t\leq 1$. For such $x$ we have:
\[
\frac{\varphi(x)}{N_L(x)}=\frac{M_{k-1}}{M_k}+t\left(1-\frac{M_{k-1}}{M_k}\right)\to 1,\qquad x\to +\infty\,.
\]
\end{proof}
The condition $1=\limsup_{k}\frac{M_{k+1}}{M_k}(=\lim_{k}\frac{M_{k+1}}{M_k})$ ensuring measurability
concerns the sub-exponential growth of the spectral multiplicity of $L$. It is in general not weaker than the condition $\limsup_{k}\sqrt[k]{M_k}=1$ since $\liminf_{k}\frac{M_{k+1}}{M_k}\le \liminf_{k}\sqrt[k]{M_k}\le \limsup_{k}\sqrt[k]{M_k}\le \limsup_{k}\frac{M_{k+1}}{M_k}$.
\vskip0.2truecm\noindent
Combining this results with the Karamata-Tauberian Theorem (in Appendix), we get a criterion of asymptotic continuity of $N_L$ in terms of regularity of the partition function $Z_L$.
\begin{prop}\label{karamata}
Suppose the contraction semigroup $\{e^{-tL}:t\ge 0\}$ to be nuclear
\[
Z_L(\beta):={\rm Tr}(e^{-\beta L})<+\infty\qquad {\rm for\,\,all\,\,}\beta >0
\]
and assume the partition function $Z_L$ to be regularly varying. Then for some $c>0$ we have
\[
N_L(x)\sim c\cdot Z_L(1/x)\qquad x\to+\infty.
\]
In particular, under these assumptions, $N_L$ is asymptotically continuous and $\rho(L)$ is a density.
\end{prop}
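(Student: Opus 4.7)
The plan is to reduce the claim to the Karamata--Tauberian theorem applied to the Laplace--Stieltjes transform of the counting measure $dN_L$, and then to invoke Proposition \ref{continuity}.

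First, by the spectral theorem and monotone convergence,
\[
Z_L(\beta) \;=\; \sum_{k\ge 1} m_k\, e^{-\beta \tl_k(L)} \;=\; \int_{0^+}^{+\infty} e^{-\beta x}\, dN_L(x),\qquad \beta>0,
\]
so $Z_L$ coincides exactly with the Laplace--Stieltjes transform of the non-decreasing, right-continuous function $N_L$. Nuclearity of the heat semigroup guarantees finiteness and indeed real-analyticity of $Z_L$ on $(0,+\infty)$; in particular $x\mapsto Z_L(1/x)$ is continuous on $\R_+^*$.

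The hypothesis that $Z_L$ is regularly varying (at $0^+$) means that $Z_L(\beta)=\beta^{-\rho}\,\ell(1/\beta)$ for some $\rho\ge 0$ and some function $\ell$ slowly varying at $+\infty$. The Karamata--Tauberian theorem recalled in the Appendix, applied to the non-decreasing $N_L$ and its Laplace--Stieltjes transform $Z_L$, then yields
\[
N_L(x) \;\sim\; \frac{1}{\Gamma(\rho+1)}\, x^{\rho}\, \ell(x) \;=\; \frac{1}{\Gamma(\rho+1)}\, Z_L(1/x),\qquad x\to+\infty,
\]
which is the asserted equivalence with $c=1/\Gamma(\rho+1)>0$.

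Setting $\varphi(x):=c\, Z_L(1/x)$ produces a continuous function on $\R_+^*$ with $N_L(x)\sim\varphi(x)$, which is exactly the asymptotic continuity hypothesis of Proposition \ref{continuity} i). That proposition then gives $\lim_k M_{k+1}/M_k=1$, whence Proposition \ref{asymptotics} iii) yields that $\rho(L)$ is measurable with ${\rm Tr}_\omega(\rho(L))=1$, i.e.\ that $\rho(L)$ is a spectral density. The only delicate step is the application of Karamata--Tauberian: one must verify that the paper's notion of ``regularly varying'' matches regular variation at $0^+$ (the scale at which the Laplace transform probes the tail of $N_L$), and handle the $\rho=0$ case through the standard slow-variation version of Karamata. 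Everything else is a direct chain of references to results already established in Section~2.
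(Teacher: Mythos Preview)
Your proof is correct and follows exactly the paper's approach: apply the Karamata--Tauberian theorem (from the Appendix) to the spectral measure $\mu={\rm Tr}\circ E^L$ to obtain $N_L(x)\sim c\cdot Z_L(1/x)$, and then invoke Proposition~\ref{continuity}. You have simply spelled out what the paper's one-line proof leaves implicit, including the identification of $c=1/\Gamma(\rho+1)$ and the caveat about regular variation at $0^+$.
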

\begin{proof}
Apply Karamata-Tauberian Theorem to the measure $\mu:={\rm Tr\,}\circ E^L$ and then apply Proposition \ref{continuity}.
\end{proof}
The result may applied to  $\theta$-{\it summable spectral triples} $(\A,h,D)$, where $Z_{D^2}(\beta)={\rm Tr\,}(e^{-\beta D^2})<+\infty$ for all $\beta>0$, provided the partition function $Z_{D^2}$ is regularly varying, as a consequence of the identity $N_L(x)=N_{D^2}(x^2)$ for all $x>0$.
\begin{rem} (Physical interpretation of nuclearity and regularity)
In applications $L$ may represent the Hamiltonian of a quantum system. The nuclearity assumption on the semigroup $\{e^{-\beta L}:\beta >0\}$ is easily seen to be equivalent to the requirement that the mean value of the energy in the Gibbs equilibrium state is finite and non vanishing at any temperature
\[
\langle L\rangle_\beta=-\frac{{\dot Z}_L(\beta)}{Z_L(\beta)}=\frac{{\rm Tr}(L e^{-\beta L})}{{\rm Tr}(e^{-\beta L})}\qquad \beta>0.
\]
The hypothesis that $Z_L$ is regularly varying requires that for some $\gamma\in\mathbb{R}$
\[
\lim_{\beta\to 0^+}\frac{Z_L(s\beta)}{Z_L(\beta)}=s^\gamma\qquad s>0.
\]
If $\dot Z_L$ is regularly varying, say $\dot Z_L(s\beta)\sim s^{\gamma-1}\dot Z_L(\beta)$ for some $\gamma\in\R$ as $\beta\to 0^+$, by de l'Hospital theorem, also $Z_L$ is regularly varying $\lim_{\beta\to 0^+}\frac{Z_L(s\beta)}{Z_L(\beta)}=s\lim_{\beta\to 0^+}\frac{\dot Z_L(s\beta)}{\dot Z_L(\beta)}=s^\gamma$ and the mean energy $\langle L\rangle$ is regularly varying too with
\[
\langle L\rangle_{s\beta}=-\frac{{\dot Z}_L(s\beta)}{Z_L(s\beta)}\sim -\frac{s^{\gamma-1}\dot Z_L(\beta)}{s^\gamma Z_L(\beta)} = \frac{1}{s}\cdot \langle L\rangle_\beta,\qquad {\rm as\,\,}\beta\to 0^+,\quad {\rm for\,\,any\,\,fixed\,\,} s>0.
\]
\end{rem}

\section{Meromorphic extensions of zeta functions and residues}\-
The $\zeta$-function of $\rho(L)$ is defined as
\[
\zeta_L(s):=Tr(\rho(L)^s)=\sum_{n\ge 1} \mu_n(\rho(L))^s=\sum_{k\ge 1} m_k\cdot M_k^{-s}
\]
for all $s\in\C$ for which the series converges. Its domain and its analytic properties will be found by comparison with the Riemann $\zeta$-function
\[
\zeta_0(s)=\sum_{n\ge 1} n^{-s},
\]
which, initially defined on the half-plane $\{s\in\C:{\frak\,{\frak Re\,}}(s)>1\}$, is then extended analytically to $\C\setminus\{1\}$. Recall that $s=1$ is simple pole for $\zeta_0$ with unital residue.
\vskip0.2truecm\noindent
The following criteria for the asymptotic properties of the $\zeta$-function $\zeta_L(s)$ as $s\to 1$ are based on various growth rates of the spectral multiplicity $m_k$ as $k\to\infty$.


\subsection{Criteria for meromorphic extensions of the $\zeta$-functions $\zeta_L$}\-

\begin{lem}
For $\varepsilon\in [0,1)$ and $s\in\C$ such that ${\frak{Re}(s)\ge0}$, we have
\[
|1-(1-\varepsilon)^s|\le |s|\rm{Log}\big((1-\varepsilon)^{-1}\big)\, .
\]
\end{lem}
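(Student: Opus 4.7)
The plan is to reduce the inequality to a standard bound on $|1 - e^{-z}|$ in the closed right half plane. Set $\alpha := \rm{Log}\big((1-\varepsilon)^{-1}\big) = -\rm{Log}(1-\varepsilon)\geq 0$, so that $(1-\varepsilon)^s = e^{s\rm{Log}(1-\varepsilon)} = e^{-\alpha s}$. The claim becomes
\[
\bigl|1 - e^{-\alpha s}\bigr| \le |s|\,\alpha,
\]
and since $\Re(s)\ge 0$ and $\alpha\ge 0$, the complex number $\alpha s$ lies in the closed right half plane $\{z\in\C : \Re(z)\ge 0\}$.

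Next I would write $1 - e^{-\alpha s}$ as a contour integral along the straight segment from $0$ to $\alpha s$, parametrised by $t\mapsto t\alpha s$ for $t\in[0,1]$:
\[
1 - e^{-\alpha s} = \int_0^1 \alpha s\, e^{-t\alpha s}\, dt.
\]
Taking absolute values and pulling the constants outside the integral gives
\[
\bigl|1 - e^{-\alpha s}\bigr| \le |s|\,\alpha\int_0^1 \bigl|e^{-t\alpha s}\bigr|\, dt = |s|\,\alpha\int_0^1 e^{-t\alpha\Re(s)}\, dt.
\]

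Now the hypothesis $\Re(s)\ge 0$ combined with $\alpha\ge 0$ forces $e^{-t\alpha\Re(s)}\le 1$ for every $t\in[0,1]$, so the last integral is bounded by $1$, and we obtain the desired inequality $\bigl|1 - e^{-\alpha s}\bigr| \le |s|\,\alpha$. There is no real obstacle here; the only subtlety worth stressing is that the sign condition on $\Re(s)$ is precisely what makes the exponential factor along the integration segment of modulus at most one, and that $\varepsilon\in[0,1)$ is needed so that $\alpha = -\rm{Log}(1-\varepsilon)$ is well defined and nonnegative.
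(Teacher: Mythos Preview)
Your proof is correct and is essentially identical to the paper's own argument: the paper also sets $b=\rm{Log}\big((1-\varepsilon)^{-1}\big)$ (your $\alpha$), writes $1-(1-\varepsilon)^s=-bs\int_0^1 e^{-bst}\,dt$, and bounds the integrand in modulus by $1$ using $\Re(s)\ge 0$. The only difference is cosmetic phrasing.
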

\begin{proof}
Setting $b:=\rm{Log}(1-\varepsilon)^{-1}$, $x:={\frak{Re\,}(s)\ge0}$, $f(t):=(1-\varepsilon)^{ts}=e^{-bst}$ for $t\in [0,1]$, we have
\[
\begin{split}
|1-(1-\varepsilon)^s|&=|f(1)-f(0)|=\Bigl|\int_0^1 f'(t)\,dt\Bigr|=\Bigl|-bs\int_0^1 e^{-bst}\,dt\Bigr|\le b|s|\cdot\int_0^1 |e^{-bst}|\, dt\le b|s|.
\end{split}
\]
\end{proof}

\begin{prop}\label{residue0} i) If $N_L$ is asymptotically continuous then the $\zeta$-function $\zeta_L$ is well defined on the half-plane $\{s\in\C:{\frak\,{\frak Re\,}}(s)>1\}$ and it admits the limit
\begin{equation}
\lim_{s\in \R\,,\;s\downarrow 1}(s-1)\,Tr(\rho(L)^s)=1\,.
\end{equation}
ii) If $\ds \sum_k \frac{m_k^2}{M_k^2}<+\infty$, then $\zeta_L$ is analytic on $\{s\in\C:{\frak\,{\frak Re\,}}(s)>1\}$ and it admits the limit
\begin{equation}
\lim_{s\in \C,\,\,{\frak Re}(s)>1,\;s\to 1}(s-1)\,Tr(\rho(L)^s)=1\,.
\end{equation}
\vskip0.1truecm\noindent
iii) If \;$\ds \sum_k \frac{m_k^2}{M_k^{1+\alpha}}<+\infty$ for some $\alpha \in (0,1)$, then $\zeta_L$ extends to a meromorphic function on the half-plane $\{s\in\C:{\frak Re}(s)> \alpha\}$ with a simple pole at $s=1$ and unital residue.
\end{prop}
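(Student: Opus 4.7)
The plan is to treat all three items via a direct comparison of $\zeta_L(s)=\sum_k m_k M_k^{-s}$ with the Riemann zeta $\zeta_0(s)=\sum_{n\ge 1}n^{-s}$. Using that $\mu_n(\rho(L))=M_k^{-1}$ for $M_{k-1}<n\le M_k$, I would write
\[
\zeta_L(s)-\zeta_0(s)=\sum_{k\ge 1}\sum_{n=M_{k-1}+1}^{M_k}\bigl(M_k^{-s}-n^{-s}\bigr),\qquad\Re s>1,
\]
and then bound each inner term by parametrising $n=M_k(1-\varepsilon_{n,k})$ with $\varepsilon_{n,k}=(M_k-n)/M_k\in[0,m_k/M_k)$: the identity $M_k^{-s}-n^{-s}=-n^{-s}\bigl(1-(1-\varepsilon_{n,k})^s\bigr)$ combined with the preceding lemma gives $|M_k^{-s}-n^{-s}|\le|s|\,n^{-\Re s}\log\bigl((1-\varepsilon_{n,k})^{-1}\bigr)$ whenever $\Re s\ge 0$.

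Part (i) is then essentially immediate: the inequality $\mu_n(\rho(L))\le 1/n$ from Proposition \ref{TrFLn} shows $\zeta_L(s)\le\zeta_0(\Re s)<\infty$ on $\{\Re s>1\}$, while Proposition \ref{continuity} turns asymptotic continuity of $N_L$ into $\lim_k M_{k+1}/M_k=1$, so the stated real limit is exactly Proposition \ref{asymptotics}~iii.c). For (ii) and (iii), I would first observe that $\sum_k m_k^2/M_k^{1+\alpha}<\infty$ (with $\alpha=1$ in (ii)) forces $m_k/M_k\to 0$: since $\alpha<1$, the vanishing of the general term yields $m_k=o(M_k^{(1+\alpha)/2})=o(M_k)$. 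Hence for all $k$ past some $k_0$ one has $\varepsilon_{n,k}\le 1/2$, so that $\log\bigl((1-\varepsilon_{n,k})^{-1}\bigr)\le 2\varepsilon_{n,k}$ and $n\ge M_k/2$; summing the termwise bound over $n$ then gives the block estimate
\[
\sum_{n=M_{k-1}+1}^{M_k}|M_k^{-s}-n^{-s}|\le 2^{1+\Re s}|s|\,\frac{m_k^2}{M_k^{1+\Re s}},\qquad\Re s>0,\ k\ge k_0.
\]
Together with the finitely many entire contributions from $k<k_0$, this shows that $\zeta_L-\zeta_0$ is dominated on compact subsets of $\{\Re s>\alpha\}$ by the convergent series $\sum_k m_k^2/M_k^{1+\alpha}$ times a continuous factor in $s$, and hence extends to a holomorphic function on $\{\Re s>\alpha\}$.

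From this, (ii) follows because holomorphy and boundedness of $\zeta_L-\zeta_0$ near $s=1$, combined with $\zeta_0(s)=(s-1)^{-1}+O(1)$, yields $\lim_{s\to 1}(s-1)\zeta_L(s)=1$ along any complex approach with $\Re s>1$; and (iii) follows by writing $\zeta_L=\zeta_0+(\zeta_L-\zeta_0)$ on $\{\Re s>\alpha\}$ and invoking the classical meromorphic continuation of $\zeta_0$ to $\C\setminus\{1\}$ with its unit residue at $s=1$. The main technical point I anticipate is justifying local uniform convergence of the double series (rather than only pointwise absolute convergence), since the exponent $1+\Re s$ in the dominating series varies with $s$; the remedy is to restrict to a compact $K\subset\{\Re s>\alpha\}$, replace $\Re s$ by $\inf_{s\in K}\Re s$ in the exponent, and absorb $|s|2^{\Re s}$ into a constant on $K$.
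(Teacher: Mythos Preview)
Your proof is correct and follows essentially the same route as the paper: both compare $\zeta_L$ with the Riemann zeta $\zeta_0$ term by term, invoke the preceding lemma $|1-(1-\varepsilon)^s|\le |s|\log((1-\varepsilon)^{-1})$, and sum the resulting block estimates to show that $\zeta_L-\zeta_0$ is bounded (for (ii)) or locally uniformly convergent on $\{\Re s>\alpha\}$ (for (iii)); your parameter $\varepsilon_{n,k}=(M_k-n)/M_k$ is exactly the paper's $n\delta_n$. The only cosmetic difference is that the paper bounds $n^{-\Re s}\le M_{k-1}^{-\Re s}$ and $\log(M_k/M_{k-1})\le m_k/M_{k-1}$, landing on $\sum_k m_k^2/M_{k-1}^{1+\Re s}$, whereas you use $n\ge M_k/2$ and $\log((1-\varepsilon)^{-1})\le 2\varepsilon$ to obtain $\sum_k m_k^2/M_k^{1+\Re s}$; since $M_{k-1}\sim M_k$ these are equivalent.
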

\begin{proof} i) By Proposition \ref{asymptotics} iii.a), the asymptotic continuity of $N_L$ implies that $\mu_n(\rho(L))\sim 1/n$ as $n\to\infty$ so that $\zeta_L$ is well defined on $\{s\in\C:{\frak\,{\frak Re\,}}(s)>1\}$. The limit behaviour is just the content of Proposition \ref{asymptotics} iii.c).

ii) Notice first that the assumption implies $\ds \lim_{k\to \infty} \frac{m_k}{M_k}=0$. Hence $\ds \lim_{k\to \infty} \frac{M_{k-1}}{M_k}=1$, $N_L$ is asymptotically continuous by Proposition \ref{continuity} ii) and $\mu_n(\rho(L))\sim 1/n$ as $n\to\infty$ again by Proposition \ref{asymptotics} iii.a). Let us write $\delta_n :=1/n-\mu_n(\rho(L))$. According to (\ref{la}),  whenever $k\ge 1$ and $M_{k-1} < n \leq M_k$ we have
\[
0\leq \delta_n < \frac{1}{n}\left(1-\frac{M_{k-1}}{M_k}\right)= \frac{1}{n}\,\frac{m_k}{M_k}\;\text{ and } \; 0\leq n\delta_n< \frac{m_k}{M_k}<1.
\]
Let us estimate the difference
\begin{equation}\label{sum}
\zeta_0(s)-\zeta_L(s)=\sum_{n\ge 1} \big(n^{-s}-(n^{-1}-\delta_n)^s\big)=\sum_{n\ge 1} n^{-s} \big(1-(1-n\delta_n)^s\big)
\end{equation}
on the closed half-plane $\{s\in\C:{\frak Re}(s)>1\}$. Since $1-n\delta_n\ge M_{k-1}/M_k>0$ for $k\ge 2$, $M_0=0$, $M_1=m_1\ge 1$, by the above lemma, for $s\in\C$ with ${\frak Re\,}(s)\ge 1$, we have
\[
\begin{split}
|\zeta_0(s)-\zeta_L(s)|&\le\sum_{n\ge 1} |n^{-s}|\cdot \big|1-(1-n\delta_n)^s\big|\\
&=\sum_{n\ge 1} n^{-{\frak Re\,}(s)}\cdot \big|1-(1-n\delta_n)^s\big|\\
&\le|s|\cdot\sum_{n\ge 1} n^{-{\frak Re\,}(s)}\cdot \rm{Log} (1-n\delta_n)^{-1}\\
&\le|s|\cdot\sum_{k\ge 1}\sum_{n>M_{k-1}}^{M_k} n^{-{\frak Re\,}(s)}\cdot  \rm{Log} (1-n\delta_n)^{-1}\\
&\le|s|\cdot\sum_{k\ge 2}\sum_{n>M_{k-1}}^{M_k} n^{-{\frak Re\,}(s)}\cdot \rm{Log} M_k/M_{k-1} + |s|\cdot\sum_{n>M_{0}}^{M_1} n^{-{\frak Re\,}(s)}\cdot \rm{Log} (1-n\delta_n)^{-1}\\
&=|s|\cdot\sum_{k\ge 2}\sum_{n>M_{k-1}}^{M_k} n^{-{\frak Re\,}(s)}\cdot  \rm{Log} (1+m_k/M_{k-1}) + |s|\cdot\sum_{n\ge 1}^{m_1} n^{-{\frak Re\,}(s)}\cdot \rm{Log} (1-n\delta_n)^{-1}\\
&\le|s|\cdot\sum_{k\ge 2}\sum_{n>M_{k-1}}^{M_k} M_{k-1}^{-{\frak Re\,}(s)}\cdot m_k/M_{k-1} + |s|\cdot\sum_{n\ge 1}^{m_1} \rm{Log} (1-n\delta_n)^{-1}\\
&=|s|\cdot\Bigl(\sum_{k\ge 2} m^2_k/M_{k-1}^{1+{\frak Re\,}(s)} + \sum_{n\ge 1}^{m_1} \rm{Log} (1-n\delta_n)^{-1}\Bigr)\\
&\le|s|\cdot\Bigl(\sum_{k\ge 2} m^2_k/M_{k-1}^{2} + \sum_{n\ge 1}^{m_1} \rm{Log} (1-n\delta_n)^{-1}\Bigr)<+\infty.\\
\end{split}
\]
where, under the current hypothesis, the series converge as $k\to \infty$ (since $M_k\sim M_{k-1}$). Then $\lim_{s\in \C\,,\;Re(s)>1\,,\;s\to 1}(s-1)[\zeta_0(s)-\zeta_L(s)]=0$ and the thesis follows since $\zeta_0$ has residue one at the simple pole $s=1$.
\vskip0.2truecm
iii) Fix $r>0$ and set $D_{r,\alpha}:=\{s\in\C: {\frak Re\,}(s)>\alpha,\,\, |s|<r\}$. Reasoning as above we have
\[
\begin{split}
\sum_{n\ge 1}\sup_{D_{r,\alpha}} \big|n^{-s}-(n^{-1}-\delta_n)^s\big|&\le \sup_{D_{r,\alpha}} |s|\cdot\Bigl(\sum_{k\ge 2} m^2_k/M_{k-1}^{1+{\frak Re\,}(s)} + \sum_{n\ge 1}^{m_1} \rm{Log} (1-n\delta_n)^{-1}\Bigr)\\
&\le r\cdot\Bigl(\sum_{k\ge 2} m^2_k/M_{k-1}^{1+\alpha} + \sum_{n\ge 1}^{m_1} \rm{Log} (1-n\delta_n)^{-1}\Bigr)\\
\end{split}
\]
so that the series of analytic functions $\sum_n \big(n^{-s}-(n^{-1}-\delta_n)^s\big)$ converges uniformly on $D_{r,\alpha}$ and then $\zeta_0(s)-\zeta_L(s)$ extends analytically on that domain. The thesis follows again by the above mentioned properties of $\zeta_0$.
\hfill $\square$\\

\begin{prop} \label{mMalpha} If \;$\ds m_k=O(M_k^\alpha)$, $k\to\infty$, for some $\alpha \in (0,1)$, then $\zeta_L$ extends to a meromorphic function on the half-plane $\{s\in\C:{\frak Re}(s)> \alpha\}$ with a simple pole at $s=1$ and unital residue.
\end{prop}
\begin{proof}
The assumption $\ds m_k=O(M_k^\alpha)$ implies that $m_k/M_k=(m_k/M_k^\alpha)M_k^{\alpha-1}\to 0$ as $k\to\infty$, so that, by Proposition \ref{continuity} ii), $N_L$ is asymptotically continuous. Applying Proposition \ref{asymptotics} iii.a) we have $\mu_n(L)\sim1/n$ as $n\to\infty$ and then, for any fixed $\beta\in (\alpha,1)$, the following series converge
\begin{equation*}
\sum_{k\ge 1} \frac{m_k^2}{M_k^{1+\beta}} \leq C \sum_{k\ge 1} \frac{m_k }{M_k^ {1+\beta-\alpha}}
=C\sum_{k\ge 1} \sum_{n=M_{k-1}+1}^{M_k}\;\mu_n(L)^{-(1+\beta-\alpha)}=C\sum_{n\ge 1}\mu_n(L)^{-(1+\beta-\alpha)}<+\infty\,.
\end{equation*}
Apply Proposition \ref{residue0} to conclude.
\end{proof}
The hypothesis of the previous result can be restated in terms of a bound on the error term in the asymptotically continuous behaviour of the counting function.
\begin{prop}\label{mMphi} For $\alpha\in (0,1)$, the two statements are equivalent
\vskip0.2truecm\noindent
(i) $m_k=O(M_k^\alpha)$, $\quad k\to \infty$
\vskip0.2truecm\noindent
(ii) There exists a continuous function $\varphi$ such that
$$N_L(x)=\varphi(x)+O(\varphi(x)^\alpha)\,,\qquad x\to +\infty\,.$$
\end{prop}
\begin{proof}
Suppose $m_k=O(M_k^\alpha)$. Notice first that $m_k=o(M_k)$ so that $M_k\sim M_{k+1}$ and $N_L$ is asymptotically continuous. Let $\varphi$ be the continuous piecewise affine function defined as
$$\varphi(x)=M_k+t(M_{k+1}-M_k)=M_k+t\,m_{k+1}\,,\;x\in [\tl_k,\tl_{k+1}]\,,\;x=\tl_k+t(\tl_{k+1}-\tl_k)\,\quad t\in [0,1].$$
One has $0\leq \varphi(x)-N_L(x)\leq m_{k+1}=O(M_{k+1}^\alpha)=O(\varphi(x)^\alpha)$, $x\to +\infty$, which provides $N_L(x)=\varphi(x)+O(\varphi(x)^\alpha)$.
\medskip
Conversely, if $\varphi$ is continuous and such that $N_L(x)=\varphi(x)+O(\varphi(x)^\alpha)$ as $x\to +\infty$, let us define the function $r(x)$ by the formula
$$N_L(x)=\varphi(x)(1+r(x))\,.$$
On one side, one has $r(x)=O(\varphi(x)^{\alpha-1}$ as $x\to +\infty$. On the other side, the function $r(\cdot)$ is right continuous with left limits $r^-(x):=\lim_{\delta \downarrow 0} r(x-\delta)$, in such a way that
$$N_L^-(x)=\varphi(x)(1+r^-(x))\,.$$

\smallskip Fixing $\delta>0$ small enough we have $N_L(\tl_{k+1}-\delta)=M_k$ while $N_L(\tl_{k+1})=M_{k+1}$, i.e.\\ $M_k=\varphi(\tl_{k+1}-\delta)\big(1+r(\tl_{k+1}-\delta)\big)$  while $M_{k+1}=\varphi(\tl_{k+1})\big(1+r(\tl_{k+1})\big)$. Taking the quotient and making $\delta$ tend to $0$, we get
$$\frac{M_{k+1}}{M_k}=\frac{1+r(\tl_{k+1})}{1+r^-(\tl_{k+1})}=\frac{1+O(\varphi(x)^{\alpha-1})}{1+O(\varphi(x)^{\alpha-1})}=1+O(\varphi(x)^{\alpha-1})=1+O(M_{k}^{\alpha-1})\,
$$
which means  $\ds \frac{m_{k+1}}{M_k}=O(M_{k}^{\alpha-1})$ and $m_{k+1}=O(M_k^\alpha)=O(M_{k+1}^\alpha)$ (since $M_{k+1}\sim M_k$)\,.
\end{proof}
Summarizing, we have the following criterion of meromorphic extension.
\begin{thm}\label{merocor}
Suppose that there exists $\alpha\in (0,1)$ and a continuous function $\varphi$ such that
$$N_L(x)=\varphi(x)+O(\varphi(x)^\alpha)\,,\quad x\to +\infty\,.$$
Then the $\zeta$-function $\zeta_L(s)=Tr\big(\rho(L)^s\big)$ extends as a meromorphic function on the half plane $\{s\in\C:{\frak Re}(s)> \alpha\}$ with a simple pole at $s=1$ and unital residue.
\end{thm}
\begin{proof}
Apply Propositions \ref{mMphi} and \ref{mMalpha}\,.
\end{proof}

\subsection{Examples of densities and their $\zeta$-functions}\-

\begin{ex}(Compact smooth manifolds I)
Let $M$ be a compact, n-dimensional, orientable, smooth manifold without boundary with cotangent bundle $\pi_*:T^*M\to M$. Let $m^*$ be the symplectic volume measure on $T^*M$ and fix a smooth volume measure $m$ on $M$. Disintegrating $m^*$ with respect to $m$ by $\pi^*$, one gets a family of measures $m^*_x$ on $T^*_xM$ for $m$-a.e. $x\in M$ such that $m^*=\int_M m^*_x\cdot m(dx)$.
\par\noindent
Let $L$ be the Friedrichs extension of an $m$-symmetric, positive, elliptic, smooth pseudo differential operator of order $k\ge 1$ with classical symbol, defined on $C^\infty(M)\subset L^2(M,m)$. Let $p$ the principal symbol of $L$, understood as a real, homogeneous polynomial of degree $k$ on $T^*M$ or as a function on the cosphere bundle $S^*M$.  The spectrum of $L$ is discrete and the Weyl asymptotic formula for the eigenvalue counting function of $L$ reads
\begin{equation}\label{Weyl}
N_L(x)\sim c\cdot x^{n/k}\qquad x\to+\infty, \qquad  c:=\frac{1}{(2\pi)^n}\cdot \int_Mm^*_x \{p(x,\cdot)<1\}\cdot m(dx).
\end{equation}
It follows by Proposition \ref{continuity} that $N_L$ is asymptotically continuous and that $\rho(L)$ is a density. The H\"ormander estimates for the remainder term [Hor1] reads
\[
N_L(x)=c\cdot x^{n/k}+ O(x^{(n-1)/k})\qquad x\to+\infty
\]
and, by Theorem \ref{merocor}, it follows that the  $\zeta$-function $\zeta_L$ of the density $\rho(L)$ is meromorphic on $\{s\in\C:{\frak Re}(s)> 1-1/n\}$ and it has a simple pole in $s=1$ with unital residue. The order of the remainder term cannot be improved in general, but in case of a product of 2-spheres $M=S^2\times S^2$ and for the Laplace-Beltrami operator $L$, M. Taylor [Tay] proved the estimate
\[
N_L(x)=c\cdot {\rm Vol}(S^2\times S^2)\cdot x^2+O(x^{4/3})\qquad x\to+\infty,\qquad c:=(4\pi)^{-2}/\Gamma(3)
\]
which ensures that $\zeta_L$ is meromorphic on $\{s\in\C:\frak{Re}(s)>2/3\}$. As another example, one can consider the Laplace-Beltrami operator $L$ on the flat torus $\mathbb{T}^n=\mathbb{R}^n/\mathbb{Z}^n$ where the reminder term is $O(x^{(n-1)/2-\gamma})$ for some $\gamma>0$. The case $n=2$ corresponds to the classical Gauss problem: in fact $N_L(x)$ coincides with the number of points in $\mathbb{Z}^2$ falling within the circle of radius $x>0$. It is known [deH page 6] that $N_L(x)=\pi x+O(x^{\alpha})$ as $x\to+\infty$ with $\alpha\in (1/4,12/37)$. Still in [Tay] it is shown that on $S^2$, the sub-elliptic operator $L=X_1^2+X_2^2$ given by the sum of squares of two vector fields $X_1$, $X_2$, generating rotations around orthogonal axes, has a counting function with a non-Weyl asymptotic behaviour
\[
N_L(x)=\frac{1}{2}x\cdot{\rm Log\,}(x)+O(x)\qquad x\to+\infty.
\]
Again by Proposition \ref{continuity}, $N_L$ is asymptotically continuous and that $\rho(L)$ is a density. The asymptotic behaviour of the counting function $N_L$ of hypoelliptic $\Psi$DO is studied in [MS].
\end{ex}
\end{proof}

One can have a meromorphic extension even if the counting function is not asymptotically continuous:
\begin{exm}\label{exfreegroup}
Let $\F_p$ be the free group with $p$ generators ($p\geq 2$), $\ell$ the length function on $\F_p$ and $L$ the multiplication operator by $\ell$ on the Hilbert space $\ell^2(\F_p)$ (see \cite{Haa}).

The spectrum of $\ell$ is $\N$: $\tl_k=k$ for $k\geq 1$ with multiplicities $m_k=2p(2p-1)^{k-1}$ and $\ds M_k=\frac{2p}{2p-2}\big((2p-1)^k-1\big)$. Hence $m_k/M_k\to (2p-2)/(2p-1)>0$ and, by Proposition \ref{continuity}, $N_L$ is not asymptotically continuous. However, since $\lim_{k\to\infty}M_{k+1}/M_k=2p-1>1$, Proposition \ref{lim} implies that the spectral weight $\rho(L)$ is measurable and that $\ds Tr_\omega(\rho(L))=\lim_{s\downarrow 1} (s-1)Tr(\rho(L)^s)= \frac{2p-2}{(2p-1)\rm{Log} (2p-1)}$. We show that this limit is indeed a residue\,:

\begin{prop}\label{GL} With the notations of the above example, we have that the zeta function $\zeta_L(s)=Tr(\rho(L)^s)$ extends as a meromorphic function on the half plane $\{s\in\C:\frak{Re}(s)>0\}$ with a simple pole at $s=1$ and residue
\[
{\rm Res}_{s=1}(\zeta_L)=\lim_{s\in \C\,,\;Re(s)>0\,,\; s\to 1} (s-1)Tr(\rho(L)^s)= \frac{2p-2}{(2p-1)\rm{Log} (2p-1)}.
\]
\end{prop}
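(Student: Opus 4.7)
The plan is to derive a closed-form series representation of $\zeta_L(s)$ that makes the meromorphic continuation manifest. Setting $q:=2p-1$, we have $m_k=(q+1)q^{k-1}$ and $M_k=\tfrac{q+1}{q-1}(q^k-1)$, so
\[
\zeta_L(s)=\frac{q+1}{q}\Bigl(\frac{q-1}{q+1}\Bigr)^{s}\sum_{k\ge 1}\frac{q^k}{(q^k-1)^s},
\]
which is already convergent for $\mathrm{Re}(s)>1$ by Proposition \ref{lim}. I would then expand $(q^k-1)^{-s}=q^{-ks}(1-q^{-k})^{-s}$ via the binomial series $(1-q^{-k})^{-s}=\sum_{j\ge 0}\binom{s+j-1}{j}q^{-kj}$ and swap the two summations. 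On $\{\mathrm{Re}(s)>1\}$ this is permissible because the bound $|\binom{s+j-1}{j}|\le C_s\,j^{\mathrm{Re}(s)-1}$ (from $\binom{s+j-1}{j}\sim j^{s-1}/\Gamma(s)$) combined with $|q^{-kj}|=q^{-kj}$ makes the double series absolutely convergent. Summing the inner geometric series in $k$ then yields
\[
\zeta_L(s)=\frac{q+1}{q}\Bigl(\frac{q-1}{q+1}\Bigr)^{s}\sum_{j\ge 0}\binom{s+j-1}{j}\,\frac{q^{1-s-j}}{1-q^{1-s-j}}.
\]

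The next step is to isolate the pole at $s=1$ and continue the formula analytically. The $j=0$ summand $\tfrac{q^{1-s}}{1-q^{1-s}}$ has simple poles exactly at $s=1+\tfrac{2\pi i n}{\log q}$, $n\in\Z$; at $s=1$ the expansion $q^{1-s}=1-(s-1)\log q+O((s-1)^2)$ produces a simple pole with local residue $\tfrac{1}{\log q}$. For each $j\ge 1$ the corresponding factor $\tfrac{q^{1-s-j}}{1-q^{1-s-j}}$ is holomorphic throughout $\{\mathrm{Re}(s)>0\}$, since its poles lie on the vertical line $\mathrm{Re}(s)=1-j\le 0$. The decisive verification is uniform convergence of the tail $\sum_{j\ge 1}$ on any compact $K\subset\{\mathrm{Re}(s)>0\}$: with $\sigma:=\min_{s\in K}\mathrm{Re}(s)>0$ one has $|q^{1-s-j}|\le q^{1-\sigma-j}\le\tfrac12$ for $j$ large, whence $\bigl|\tfrac{q^{1-s-j}}{1-q^{1-s-j}}\bigr|\le 2\,q^{1-\sigma-j}$, while $|\binom{s+j-1}{j}|$ grows at most polynomially in $j$ uniformly on $K$. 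The $j\ge 1$ tail therefore defines a holomorphic function on the whole half-plane $\{\mathrm{Re}(s)>0\}$.

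Assembling these pieces, $\zeta_L$ extends meromorphically to $\{\mathrm{Re}(s)>0\}$ with a simple pole at $s=1$ (the only pole on the real axis), and the residue is obtained by evaluating the prefactor there:
\[
\mathrm{Res}_{s=1}\zeta_L=\frac{q+1}{q}\cdot\frac{q-1}{q+1}\cdot\frac{1}{\log q}=\frac{q-1}{q\,\log q}=\frac{2p-2}{(2p-1)\log(2p-1)},
\]
in agreement with the limit already computed in Proposition \ref{lim}. The principal technical obstacle I anticipate is the rigorous justification of the summation interchange on $\{\mathrm{Re}(s)>1\}$ together with the uniform-on-compacta convergence of the $j\ge 1$ tail throughout $\{\mathrm{Re}(s)>0\}$; both reduce to the polynomial growth estimate for $|\binom{s+j-1}{j}|$, which is a routine application of Stirling's asymptotic for the Gamma function.
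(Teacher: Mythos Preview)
Your argument is correct and follows essentially the same route as the paper's proof. Both start from
\[
\zeta_L(s)=\frac{q+1}{q}\Bigl(\frac{q-1}{q+1}\Bigr)^{s}\sum_{k\ge 1}q^{k(1-s)}(1-q^{-k})^{-s},\qquad q=2p-1,
\]
and both isolate the geometric series $\sum_k q^{k(1-s)}=\dfrac{q^{1-s}}{1-q^{1-s}}$ as the sole carrier of the pole at $s=1$, with the remainder shown to be holomorphic on $\{\mathrm{Re}(s)>0\}$. The paper does this in one step, writing $(1-q^{-k})^{-s}=1+\text{(remainder)}$ and bounding the remainder by $C\,q^{-k}$ uniformly on compacta via the elementary Lipschitz estimate $|1-(1-\varepsilon)^s|\le |s|\log(1-\varepsilon)^{-1}$; you instead expand $(1-q^{-k})^{-s}$ fully by the binomial series and control the $j\ge 1$ tail using polynomial growth of $\binom{s+j-1}{j}$. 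Your version requires the extra Stirling input but yields a more explicit formula; the paper's version is slightly more economical. Either way the residue computation is identical.

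One point worth noting: you correctly observe that the $j=0$ term also has simple poles at $s=1+\tfrac{2\pi i n}{\log q}$, $n\in\Z\setminus\{0\}$, all lying on the line $\mathrm{Re}(s)=1$. The paper's proof states that $Z_1$ has ``one pole at $s=1$'', which overlooks these; your phrasing ``the only pole on the real axis'' is the accurate one. This does not affect the residue claim, but it means the meromorphic extension has infinitely many poles in the half-plane, not just the one at $s=1$.
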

{\it Proof.} Let us compute for ${\frak Re}(s)>1$
\begin{equation*} \begin{split}
Tr(\rho(L)^s)&=\sum_{k\ge 1} m_k M_k^{-s} \\
&=(2p)^{-s+1}\frac{(2p-2)^s}{2p-1}\sum_{k\ge1} (2p-1)^{k}\big((2p-1)^k-1\big)^{-s}\\
&=(2p)^{-s+1}\frac{(2p-2)^s}{2p-1}\sum_{k\ge1} (2p-1)^{k-ks}\big(1-(2p-1)^{-k}\big)^{-s}\\
&=\varphi(s) \big(Z_1(s)-Z_2(s)\big)
\end{split}\end{equation*}
with \\
 $\bullet$ $\ds \varphi(s)=(2p)^{-s+1}\frac{(2p-2)^s}{2p-1}$\,: $\varphi$ extends as an analytic function on the whole complex plane\,;\ its value at $s=1$ is $(2p-2)/(2p-1)$\,;\\
 $\bullet$ $\ds Z_1(s)=\sum_k (2p-1)^{k-ks}=\frac{(2p-1)^{1-s}}{1-(2p-1)^{1-s}}$\,: $Z_1$ extends as a meromorphic function on the whole complex plane with one pole at $s=1$ which is simple, with residue $\ds \frac{1}{\rm{Log}(2p-1)}$\,; \\
 $\bullet$ $\ds Z_2(s)=\sum_k (2p-1)^{k-ks}\big(1-(1-(2p-1)^{-k}\big)^s\big)$\,: $Z_2$ appears as a sum of analytic functions, each of them being bounded that way : fix $S>0$, then there exists a constant $C$ such that
 $$\big|1-(1-(2p-1)^{-k}\big)^s\big|\leq C\,(2p-1)^{-k}\,,\;k\geq 1\,,\; |s|\leq S $$
 hence
 $$ \Big|(2p-1)^{k-ks}\big(1-(1-(2p-1)^{-k}\big)^s\big)\Big| \leq C (2p-1)^{-ks}\,,\; k\geq 1\,,\;|s|\leq S\,.$$
 The series $\sum_k (2p-1)^{k-ks}\big(1-(1-(2p-1)^{-k}\big)^s\big)$ converges locally uniformly on the upper half plane ${\frak Re}(s)>0$ and its sum defines an analytic function on the half plane $\frak{Re}(s)>0$.

 Gathering those intermediate results, the Proposition is proved. \hfill $\square$

\end{exm}

\section{Volume forms associated to spectral weights}

\subsection{Volume forms.}

Fix a Dixmier ultrafilter $\omega$ on $\N$ as obtained in subsection \ref{DixmierTraces}.
\begin{defn} The {\it volume form} on $\B(h)$ associated to $L$ and $\omega$ will be the linear form $\Omega_L$
$$\B(h)\ni T \to \Omega_L(A)=Tr_\omega(T\,\rho(L))\,.$$
The restriction of $\Omega_L$ to a sub-$C^*$-algebra $A\subset \B(h)$ will be the called {\it volume form} on $A$.
\end{defn}
Volume forms satisfy some obvious properties:
\begin{prop}
\vskip0.1truecm\noindent
i) $\Omega_L$ is a positive, hence uniformly continuous linear form on $\B(h)$ with norm less than $Tr_\omega(\rho(L))\leq 1$.
\vskip0.2truecm\noindent
ii) $\Omega_L$ vanishes on the $C^*$-algebra $\K(h)$ of compact operators, hence defines a positive linear form on the Calkin algebra $\B(h)/\K(h)$\,.
\vskip0.2truecm\noindent
iii) $\ds \Omega_L(T)=\widetilde \omega- \lim_{r\to \infty} \frac{1}{r}\,Tr\big(T\,\rho(L)^{1+\frac{1}{r}}\big)$ (recall the definition of $\widetilde\omega$ above \ref{CPS31}).
\end{prop}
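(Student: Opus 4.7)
The three assertions are essentially immediate consequences of the structural properties of Dixmier traces recalled in subsection \ref{DixmierTraces} together with Proposition \ref{asymptotics}. My plan is to treat them in order, with most work going into (ii); parts (i) and (iii) are virtually bookkeeping.

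For part (i), I would first establish positivity by exploiting the tracial and positive nature of $Tr_\omega$ on $\Lm^{(1,\infty)}$. Given $T \ge 0$, cyclicity of the Dixmier trace (valid since $\rho(L)\in\Lm^{(1,\infty)}$ and $T$ is bounded) gives
\[
\Omega_L(T) = Tr_\omega\bigl(T\,\rho(L)\bigr) = Tr_\omega\bigl(\rho(L)^{1/2}\,T\,\rho(L)^{1/2}\bigr),
\]
and the argument of $Tr_\omega$ is a nonnegative element of $\Lm^{(1,\infty)}$, so its Dixmier trace is nonnegative. A positive linear form on a unital C$^*$-algebra is automatically bounded and attains its norm at the identity, so $\|\Omega_L\| = \Omega_L(I) = Tr_\omega(\rho(L)) \le 1$ by Proposition \ref{asymptotics} i).

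For part (ii), the starting point is the elementary fact that Dixmier traces vanish on trace class operators, since then $\sum_{n=1}^N \mu_n(\cdot)$ stays bounded while $\rm{Log}\, N \to\infty$. If $F$ is finite rank, $F\rho(L)$ is trace class, whence $\Omega_L(F)=0$. For a general compact $K$, I would approximate $K$ in operator norm by finite rank operators $F_n$ and apply the continuity bound
\[
|\Omega_L(K) - \Omega_L(F_n)| \;=\; |Tr_\omega((K-F_n)\rho(L))| \;\le\; 2\,\|K-F_n\|_{\rm op}\cdot Tr_\omega(\rho(L)),
\]
which follows from cyclicity together with the operator inequalities
\[
-\|A\|\,\rho(L) \;\le\; \rho(L)^{1/2}\,\mathrm{Re}(A)\,\rho(L)^{1/2} \;\le\; \|A\|\,\rho(L)
\]
(applied to $A=K-F_n$ and its imaginary counterpart). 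Letting $n\to\infty$ gives $\Omega_L(K)=0$. Hence $\Omega_L$ annihilates the closed two-sided ideal $\K(h)$ and so descends to a positive linear form on the Calkin algebra $\B(h)/\K(h)$.

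Part (iii) is a direct invocation of formula (\ref{CPS38}): apply it with the positive element $\rho(L)\in\Lm^{(1,\infty)}_+$ in the role of $T$ there, and with $T\in\B(h)$ playing the role of $A$. The only step requiring minor justification is the continuity estimate used in (ii), which is not made explicit in the excerpt but is an immediate consequence of cyclicity and positivity of $Tr_\omega$; I anticipate this as the only mild obstacle, and it does not involve any new analytic input.
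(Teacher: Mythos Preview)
Your proposal is correct and aligns with the paper's treatment: the paper states this proposition without proof, calling these ``obvious properties,'' and the underlying reasoning (positivity $\Rightarrow$ norm continuity, vanishing on finite rank operators, density of finite rank in $\K(h)$) appears verbatim in the proof of the next proposition (Proposition \ref{volumeform}). One minor simplification: once (i) is established, the continuity estimate in (ii) follows immediately from $|\Omega_L(A)|\le \|\Omega_L\|\,\|A\|$, so the factor $2$ and the sandwiching operator inequalities are unnecessary.
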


\begin{prop}\label{volumeform}
For any measurable function $\varphi :\R_+\to \R_+$ such that
\[
N_L(x)\sim \varphi(x)\qquad x\to +\infty,
\]
i)  the operator $\varphi(L)^{-1}$ belongs to the ideal $\Lm^{(1,\infty)}(h)$  and
\[
Tr_\omega\big(\rho(L)\big)=Tr_\omega\big(\varphi(L)^{-1}\big)\qquad {\it for\,\, all\,\, ultrafilter}\,\, \omega;
\]
ii) for any $T\in \B(h)$, the compact operators $T\rho(L)$, $T\varphi(L)^{-1}$ belong to the ideal $\Lm^{(1,\infty)}(h)$ and the volume form can be represented as
\[
\Omega_L(T):=Tr_\omega\big(\,T\,\rho(L)\big)=Tr_\omega\big(\,T\,\varphi(L)^{-1}\big).
\]
\end{prop}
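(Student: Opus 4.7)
The plan is to realize $\varphi(L)^{-1}$ as a compact perturbation of $\rho(L)$ and then show that the perturbation contributes nothing to the Dixmier trace, neither alone nor when paired with a bounded operator.

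Both $\rho(L)=N_L(L)^{-1}$ and $\varphi(L)^{-1}$ are bounded Borel functions of the same operator $L$ (after modifying $\varphi$ on a bounded set if necessary to ensure $\varphi(\tl_k)>0$ for all $k$; this neither affects $N_L\sim\varphi$ nor the Dixmier traces, corresponding only to a finite-rank change of $\varphi(L)^{-1}$). They commute, and one has
$$\varphi(L)^{-1}=(I+K)\,\rho(L),\qquad K:=(N_L/\varphi)(L)-I.$$
On the $\tl_k$-eigenspace of $L$, the operator $K$ acts by the scalar $M_k/\varphi(\tl_k)-1=N_L(\tl_k)/\varphi(\tl_k)-1$, which tends to $0$ by $N_L\sim\varphi$; hence $K$ is compact. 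Since $\rho(L)\in\Lm^{(1,\infty)}(h)$ by Proposition \ref{asymptotics} i), the ideal property of $\Lm^{(1,\infty)}(h)$ gives $\varphi(L)^{-1}=\rho(L)+K\rho(L)\in\Lm^{(1,\infty)}(h)$ and, for every $T\in\B(h)$, also $T\rho(L),\,T\varphi(L)^{-1}\in\Lm^{(1,\infty)}(h)$. This takes care of the membership statements in both i) and ii).

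For the Dixmier trace equalities, by linearity it suffices to establish that $Tr_\omega(TK\rho(L))=0$ for every $T\in\B(h)$; specializing to $T=I$ yields i), and the general case yields ii). The operator $TK$ is compact, so approximate it by $TK=F_N+R_N$ with $F_N$ of finite rank and $\|R_N\|\to 0$. Then $F_N\rho(L)$ is finite rank, hence trace class, so $Tr_\omega(F_N\rho(L))=0$ by singularity of the Dixmier trace. For the remainder I would invoke identity (\ref{CPS38}) with the bounded operator $R_N$ and the positive element $\rho(L)\in\Lm^{(1,\infty)}(h)$:
$$Tr_\omega\bigl(R_N\rho(L)\bigr)=\widetilde\omega-\lim_{r\to+\infty}\frac{1}{r}\,Tr\bigl(R_N\,\rho(L)^{1+1/r}\bigr),$$
together with the trace-class estimate $|(1/r)\,Tr(R_N\rho(L)^{1+1/r})|\le\|R_N\|\,(1/r)\,Tr(\rho(L)^{1+1/r})$ and Proposition \ref{asymptotics} ii), which gives $\limsup_{r\to+\infty}(1/r)\,Tr(\rho(L)^{1+1/r})\le 1$. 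This yields $|Tr_\omega(R_N\rho(L))|\le\|R_N\|$, and letting $N\to+\infty$ forces $Tr_\omega(TK\rho(L))=0$, hence the two identities at once.

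The only delicate point is the uniform bound on $Tr_\omega(R_N\rho(L))$ for $R_N$ neither positive nor self-adjoint. Going through (\ref{CPS38}) sidesteps any manipulation of singular values of a non-positive operator and produces the clean estimate $|Tr_\omega(R\rho(L))|\le\|R\|$ for arbitrary bounded $R$, which is the single fact driving the finite-rank approximation to a conclusion.
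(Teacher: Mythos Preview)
Your proof is correct and follows essentially the same approach as the paper: write $\varphi(L)^{-1}=\rho(L)+K\rho(L)$ with $K=g(L)$ compact (the paper uses $g=N_L/\varphi-1$, exactly your $K$), and then argue that the compact perturbation contributes nothing to the Dixmier trace. The only difference is in how the bound $|Tr_\omega(R\,\rho(L))|\le\|R\|$ is obtained: the paper simply observes that $T\mapsto Tr_\omega(T\rho(L))$ is a positive linear form on $\B(h)$, hence automatically norm-continuous with norm $Tr_\omega(\rho(L))\le 1$, which makes the detour through (\ref{CPS38}) and Proposition \ref{asymptotics} ii) unnecessary; for part i) the paper alternatively notes that $K\rho(L)$ lies in the closure $\Lm_0^{(1,\infty)}(h)$ of the finite-rank operators in $\Lm^{(1,\infty)}(h)$, where every Dixmier trace vanishes.
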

\begin{proof}
i) One has $N_L(x)^{-1}\sim \varphi(x)^{-1}$, which we write $\varphi(x)^{-1}=N_L(x)^{-1}+g(x)N_L(x)^{-1}$ for some function $g$ with $\lim_{x\to \infty} g(x)=0$. We then have $\varphi(L)^{-1}=\rho(L)+g(L)\rho(L)$ with $g(L)$ compact. Hence $g(L)\rho(L)$ belong to the closure $\Lm_0^{(1,\infty)}(h)$ in $\Lm^{(1,\infty)}(h)$ of the ideal of finite rank operators, on which the Dixmier trace vanishes and the result follows.\\
ii) Notice that $\B(h)\ni T \to Tr_\omega\big(T\rho(L)\big)$ is a positive linear form, hence is a norm continuous functional on $\B(h)$. As it is obviously $0$ whenever $T$ has finite rank, it vanishes on the $C^*$-algebra $\K(h)$ of compact operators. Hence, we have $Tr_\omega\big(g(L)\rho(L)\big)=0$ and, for any $T\in \B(h)$ and $Tr_\omega\big(\,T\,g(L)\rho(L)\big)=0$.
\end{proof}

%
%
%
Hereafter are the first examples of such volume states and linear forms.
\subsection{Compact smooth manifolds II}
i) In the framework and notations of Example 3.2.1, consider the action $g\mapsto M_g$ of the commutative C$^*$-algebra $C(M)$ by pointwise multiplication on $L^2(M,m)$.  By the Weyl asymptotic formula $N_L(x)\sim c\cdot x^{n/k}=:\varphi(x)$ as $x\to+\infty$, $N_L$ is asymptotically continuous, $\rho(L)$ is a density and the volume forms $\Omega_L$ are states. Since $\varphi(L)^{-1}=c^{-1}\cdot L^{-n/k}$, by Proposition \ref{volumeform} one has
$\Omega_L(T)=c^{-1}\cdot Tr_\omega\big(\,T\,L^{-n/k}\big)$. The restriction of these states to $C(M)$ are represented by probability measures $\nu_\omega$: $\Omega_L(M_g)=\int_M g\cdot d\nu_\omega$.
\vskip0.2truecm\noindent
Let $\pi^*:S^* M\to M$ be the cosphere bundle whose fibers are the rays of $T^*M$ and consider a {\it scalar valued}, elliptic, $m$-symmetric $1$-order $\Psi DO$ on $M$ with classical symbol, denoting by $D$ its self-adjoint extension to $L^2(M,m)$.

\vskip0.2truecm\noindent
ii)  Since the operators $D$ and $M_g$, $g\in C^\infty(M)$, are scalar valued $\Psi DO$, their symbols commute. Also, since they are of order $1$ and $0$ respectively,  by the rules of pseudo differential calculus, the commutators $[D, M_g]$ are $0$-order $\Psi DO$, thus bounded. Hence $(C^\infty(M),D,L^2(M,m))$ is a $(d,\infty)$-summable spectral triple on $C(M)$, in the sense of A. Connes \cite{Co3}. The spectral weight $\rho(|D|)$ is a density and the volume forms $\Omega_{|D|}$ are states on $C(M)$ represented by probability measures $\nu_\omega$ on $M$: $\Omega_{|D|}(M_g)=\int_M g\cdot d\nu_\omega$.
\vskip0.2truecm\noindent
iii) Let $\A$ be the $^*$-algebra of {\it scalar valued}, $0$-order $\Psi DO$ on $M$, acting boundedly on $L^2(M,m)$. Let $\mathcal{P}(M)$ be the C$^*$-algebra of bounded operators on $L^2(M,m)$ generated by $\A$ (see \cite{HR}). Again, since $D$ and operators $T$ in $\A$ are scalar valued their symbols commute and since they are of order $1$ and $0$ respectively, the commutators $[T,D]$ have $0$-order and are thus bounded. Hence $(\A,D,L^2(M,m))$ is a $(d,\infty)$-summable spectral triple on $\mathcal{P(M)}$. Since
\[
  \begin{tikzcd}
    0 \arrow{r} & \mathcal{K}(L^2(M,m)) \arrow{r} & \mathcal{P}(M) \arrow{r}{\sigma} & C(S^*(M)) \arrow{r} & 0,
  \end{tikzcd}
\]
is a C$^*$-algebra extension (see \cite{D}, \cite{HR}) and the volume states $\Omega_{|D|}$ restricted to $\mathcal{P}(M)$ vanish on the ideal $\K(L^2(M,m))$, it follows that they
factorizes through suitable probability measures $\nu^*_\omega$ on the cosphere bundle $S^*(M)$. As the representation $g\mapsto M_g$ is injective, we can identify $C(M)$ with its image in $\B(L^2(M,m))$. Since $C(M)\cap \mathcal{K}(L^2(M,m))=\{0\}$, $C(M)\subset \mathcal{P}(M)$ and the restriction of the principal symbol map $\sigma$ is given by $\sigma(M_g)=g\circ\pi^*$ for any $g\in C^\infty(M)$, the measure
$\nu_\omega$ is the image of $\nu_\omega^*$ under $\pi^*:S^* M\to M$.
\vskip0.2truecm\noindent

\subsection{Multiplication operators on discrete groups}\label{mult} Let $G$ be a countable discrete group with unit $e$ and left regular representation $\lambda$ in the Hilbert space $l^2(G)$. If $\{\delta_g\}_{g\in G}$ is the canonical orthonormal base of $l^2(G)$, we have $\lambda(g)\delta_h=\delta_{gh}$, $g,h\in G$.

Let $\ell$ be a proper function from $G$ in $\R_+$ and $L$ the operator of multiplication by $\ell$ on $l^2(G)$\,: $L\delta_g=\ell(g)\delta_g$. The (discrete) spectrum of $L$ coincides with the image of $\ell$: $sp(L)=\{\ell(g):g\in G \}$. For any $e\neq g\in G$, $s\in \C$, $\frak{Re}(s)>1$, we have
\[
Tr(\lambda(g)\rho(L)^s) = \sum_h (\delta_h |\,N_L(\ell(g))^{-s}\delta_{gh})_{l^2(G)}=0.
\]

The generic element $a\in C^*_{red}(G)$ has a Fourier expansion $a=\sum_g a_g\lambda(g)$ and is a uniform limit of elements of which the Fourier expansion has finite support ($a_g=0$ except for a finite number of $g$'s). For $a$ with finite support, we have $Tr(a\rho(L)^s)=a_e Tr(\rho(L)^s)=\tau(a)Tr(\rho(L)^s)$, where $\tau$ is the canonical trace on $C^*_{red}(G)$: $\tau(a)=a_e$. By uniform continuity, this formula extends to any $a\in C^*_{red}(G)$.  Finally, applying formula (\ref{la}) of subsection \ref{DixmierTraces}, we get
$$\Omega_L(a)=Tr_\omega(a\rho(L))=\tau(a)Tr_\omega(\rho(L))\,, \; a\in C^*_{red}(G)\,.$$
Normalizing $\Omega_L$ by $Tr_\omega(\rho(L))$ we get the canonical trace $\tau$ for any ultrafilter $\omega$. The multiplicity $m(\lambda)$ of an eigenvalue $\lambda\ge 0$ is the cardinality of the level set $\{g\in G:\ell(g)=\lambda\}$ while its cumulated multiplicity $M(\lambda)$ is the cardinality of the sub-level set $\{g\in G:\ell(g)\le\lambda\}$. In case $m(\lambda)=o(M(\lambda))$ as $\lambda\to+\infty$, the eigenvalue counting function is asymptotically continuous, $\rho(L)$ is a density and the volume form $\Omega_L$ coincides with the trace state $\tau$ for any ultrafilter $\omega$. This is the case of the word length function $\ell$ of a system of generators for a discrete group $G$ with sub-exponential growth \cite{deH}.

\begin{ex} We apply the previous argument to the case where $G=\F_p$ is the free group with $p$ generators and exponential growth and $L$ is the multiplication operator by the length function $\ell$. Extending the argument of subsection \ref{mult}, we get easily $Tr(a\rho(L)^s)=a_e Tr(\rho(L)^s)=\tau(a)Tr(\rho(L)^s)$ for $a\in C^*_{red}(G)$ and $s\in \C$, $Re(s)>1$. Proposition \ref{GL} allows to reach the volume form as a residue :
$$ \Omega_L(a)=\frac{2p-2}{(2p-1)\rm{Log}(2p-1)}\,\tau(a)=\lim_{s\in \C\,,\;\frak{Re}(s)>1\,,\;s\to 1}(s-1)\, Tr(a\,\rho(L)^s)\,,\;a\in C^*_{red}(G)\,.  $$
\end{ex}

\section{Further examples}

\subsection{The Toeplitz C$^*$-algebra I}\label{toeplitz1}
Let $A\subset\B(l^2(\mathbb{N}))$ be the Toeplitz C$^*$-algebra generated by the shift operator $S$ on $l^2(\mathbb{N})$:
\[
S e_n=e_{n+1}\qquad n\in\N
\]
and let $L$ be the multiplication operator on $l^2(\mathbb{N})$ given by
\[
(Lu)(n):=nu(n)\qquad n\in \mathbb{N},\quad u\in l^2(\mathbb{N})\, .
\]
Its spectrum $sp(L)=\N$ is discrete with all multiplicities equal to one and counting function $N_L(x)=[x]+1$ for $x\ge 0$. Hence $N_L(L)=L+1$ and $\rho(L)=(L+1)^{-1}$. Since $N_L(x)\sim \varphi(x):=x$ as $x\to+\infty$, $N_L$ is asymptotically continuous and measurability holds true. In this case $\zeta_L$ coincides with the Riemann $\zeta$-function $\zeta_0$ and since the remainder function $N_L(x)-\varphi(x)=1-(x-[x])$ is bounded, applying Theorem \ref{merocor} for all $\alpha\in (0,1)$, we obtain the known fact that $\zeta_0$ is meromorphic in the open right half plane with a simple pole at $s=1$ and unital residue. The volume forms
\[
\Omega_L(a)=Tr_\omega (a\varphi(L)^{-1})=Tr_\omega (aL^{-1})\qquad a\in A
\]
are states on $A$ vanishing on the ideal $\K(l^2(\mathbb{N}))$ of compact operators. Since $A$ is an extension in the sense of [D]
\[
  \begin{tikzcd}
    0 \arrow{r} & \mathcal{K}(l^2(\N)) \arrow{r} & A \arrow{r}{\sigma} & C(\mathbb{T}) \arrow{r} & 0,
  \end{tikzcd}
\]
where $\mathbb{T}$ is the unit circle and $\sigma(S)(z)=z$ for $z\in\mathbb{T}$, it follows that the states $\Omega_L$ are determined by probability measures $m_\omega$ on $\mathbb{T}$, $\Omega_L(a)=\int_\mathbb{T}\sigma(a)dm_\omega$ for all $a\in A$, and that they are thus traces on $A$ since $C(\mathbb{T})$ is commutative. Since $Tr(S^kL^{-s})=\sum_{n\ge 0}(e_n|S^kn^{-s}e_n)=\sum_{n\ge 0}n^{-s}(e_n|e_{n+k})=\delta_{k,0}\cdot \zeta_0(s)$ for all $s>1$ and $k\ge 0$, by formula \ref{CPS38} we have $\Omega_L(S^k)=\delta_{k,0}$. Hence all measures $m_\omega$ coincides with the Haar probability measure $m_H$ for any ultrafilter $\omega$ and $aL^{-1}$ is measurable for any $a\in A$.


\subsection{The density of Euclidean domains having infinite volume}
In Euclidean domains with infinite volume $\Omega$, the Weyl's asymptotic cannot hold true for the Laplacian $L$ with Dirichlet boundary conditions on $\partial\Omega$, even if the spectrum is discrete. B. Simon determined in \cite{S} (Theorem 1.5) the asymptotic behavior of $N_L$ for certain planar domains of infinite volume. For example, when
$\Omega:=\{(x,y)\in \R^2:|xy|\le 1\}$ one has
\[
{\hat\mu_L}(t)=Z_L(t)\sim \frac{1}{\pi}\cdot t^{-1}\rm{Log}(t^{-1})\qquad t\to 0^+
\]
from which, by Proposition \ref{karamata}, one derives the asymptotic behaviour
\[
N_L(x)\sim \frac{1}{\pi}\cdot x\cdot{\rm Log} (x)\qquad x\to+\infty.
\]
Hence $N_L$ is asymptotically continuous and, by Proposition \ref{continuity}, $\rho(L)$ is a density. The volume states read
\[
\Omega_L(T)=\pi\cdot Tr_\omega(TL^{-1}\rm{Log}^{-1}(L+I))\qquad T\in\B(L^2(\Omega,dx))
\]
and they determine probability measures $\nu_\omega$ on $\Omega$ by $\int_\Omega f\cdot d\nu_\omega:=\Omega_L(M_f)$ for $f\in C_0(\Omega)$.

\subsection{Kigami Laplacians on P.C.F. fractals} Let $K$ be a post critically finite, self-similar fractal set and $(\E,\mathcal{F})$ be the Dirichlet form associated to a fixed regular harmonic structure (with energy weights $0<r_1,\cdots,r_m<1$) in the J. Kigami's sense \cite{KL}. This quadratic form is closable with respect to any Bernoulli measure $m$ on $K$ (with weights $0<\mu_1,\cdots,\mu_m<1$ such that $\sum_{i=1}^m\mu_i=1$)  and we denote by $L$ the densely defined, nonnegative, self-adjoint operator on $L^2(K,m)$ associated to its closure. Set $\gamma_i:=\sqrt{r_i\mu_i}$ and define the {\it spectral dimension} $d_S$ as the unique positive number such that
\[
\sum_{i=1}^m \gamma_i^{d_S}=1.
\]
In the {\it non-arithmetic} case, where $\sum_{i=1}^m\mathbb{Z}\rm{Log}\gamma_i$ is a dense additive subgroup of $\R$, the asymptotic of the counting function $N_L$ follow a power law similar to the H. Weyl 's one for compact Riemannian manifolds (see \cite{KL} Theorem 2.4)
\[
N_L(x)\sim c\cdot x^{d_S/2},\qquad x\to+\infty
\]
where $c:=\Bigl[-\Bigl(\sum_{i=1}^m\gamma_i^{d_S}\rm{Log}\gamma_i\Bigr)^{-1}\cdot\int_\R e^{-d_St}R(e^{2t})dt\Bigr]$ and $R(x):=N_L(x)-\sum_{i=1}^mN_L(r_i\mu_i x)$. Consequently, the spectral weight $\rho(L)$ is measurable and it is in fact a density. Evaluating the volume forms $\Omega_L$ on the multiplication operators $M_g\in \mathcal{B}(L^2(K,m))$ by continuous functions $g\in C(K)$, one gets positive states on $C(K)$ represented by probability measures $\nu_\omega$ on $K$
\[
\Omega_L(M_g)=c^{-1}\cdot Tr_\omega(M_g\cdot L^{-d_S/2})=\int_K g\cdot d\nu_\omega.
\]
\section{Volume traces from densities}

We denote by $\A_L$ the so called {\it Lipschitz algebra} of $L$, i.e.
\[
\A_L:=\{a\in \B(h)\,,\; [a,L] \text{ is bounded }\}.
\]

\subsection{Statement of the results.}

The purpose of the whole section is to prove the following theorem, together with two main corollaries, providing conditions which ensure the existence of {\it hypertraces or amenable traces} (see \cite{B}, \cite{Co2}). In case of a finitely-summable spectral triple (\cite{Co1} page 68), the result, known as Connes' Trace Theorem is proved in \cite{Co2} Theorem 8 and Remark 10 b). In case of a $(d,\infty)$-summable spectral triple, a proof of the the result, stated in \cite{Co3} Chapter IV.2 Proposition 15, is provided in \cite{CGS}.
\begin{thm}\label{residue} (Trace Theorem) Suppose that the counting function satisfies
\[
N_L(x)\sim \varphi(x)\qquad x\to+\infty
\]
for some continuous, increasing function $\varphi:\R_+\to\R_+$ such that
\begin{equation}
\varphi'\in L^\infty_{\rm loc}(\R_+),\qquad \label{subexp} {\rm ess-}\limsup_{x\to +\infty} \varphi'(x)/\varphi(x)=0.
\end{equation}
Then the following limit properties hold true
\vskip0.2truecm\noindent
(1.a) For $s>1$, $\varphi(L)^{-s}$ is trace-class and $ \lim_{s\downarrow 1} (s-1)\, Tr\Big(\,\varphi(L)^{-s}\,\Big)=1$\,.

(1.b) For $a \in \A_L$, one has
\begin{equation}\label{Trconvergence}
\lim_{s\downarrow 1} \big(s-1)\, Tr\Big(\,\big|\,\big[ a\,,\,\varphi(L)^{-s}\big]\,\big|\,\Big) =0
\end{equation}

(1.c) For $a\in \A_L$ and $b\in \B(h)$, one has
\begin{equation}\label{stoone}
\lim_{s\to 1} \big(s-1)\, Tr\Big( (ab-ba)\varphi(L)^{-s}\Big) =0\,.
\end{equation}

\medskip (2) Hypertrace properties.
\vskip0.2truecm
For $s>1$ define the linear functionals
\[
\omega_s(b):=(s-1)Tr(b\,\varphi(L)^{-s})\qquad b\in\B(h).
\]
As $s\downarrow 1$, the limit point set of $\{\omega_s\in\B(h)^*:s>1\}$ is not empty and any such limit linear form $\tau$ is a state on $\B(h)$ with the following properties :
\vskip0.2truecm\noindent
 (2.a) $\tau$ vanishes on the algebra $\K(h)$ of compact operators\,;
 \vskip0.2truecm\noindent
 (2.b) $\tau$ is a hypertrace on the uniforme closure $A$ of the Lipschitz algebra $\A_L$\,:
 \begin{equation}
 \tau(ba)=\tau(ab)\,,\; a\in A\,,\, b\in \B(h)\,;
 \end{equation}
\vskip0.2truecm\noindent
 (2.c) The restriction of $\tau$ to $A$ is a tracial state.

 \medskip 3. Any volume form $\Omega_L(a)=Tr_\omega(a\,\rho(L))$ on $\B(h)$ is an hypertrace and a tracial state on $A$ (with $ \omega$ as in subsection \ref{DixmierTraces}).
 \end{thm}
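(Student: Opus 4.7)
My plan is to treat part (1) as the analytic core, deduce part (2) by a weak-$*$ compactness argument applied to the positive functionals $\omega_s(b):=(s-1)Tr(b\,\varphi(L)^{-s})$, $s>1$, and finally obtain part (3) by recognising $\Omega_L$ as a particular limit point of the family $\{\omega_s\}_{s>1}$.

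For (1.a), the hypothesis $N_L\sim\varphi$ forces $N_L$ to be asymptotically continuous, so by Proposition \ref{continuity} and Proposition \ref{asymptotics}.iii the operator $\varphi(L)^{-1}$ has eigenvalues $\sim 1/n$; trace-classness for $s>1$ and the limit $(s-1)Tr(\varphi(L)^{-s})\to 1$ then follow by comparison with $\zeta_0(s)=\sum_n n^{-s}$, essentially as in the proof of Proposition \ref{residue0}.i, using $\varphi(\tl_k)\sim M_k$. Part (1.c) is immediate from (1.b) via cyclicity of the trace:
\[
Tr((ab-ba)\varphi(L)^{-s})=Tr(b\varphi(L)^{-s}a)-Tr(ba\varphi(L)^{-s})=-Tr(b\,[a,\varphi(L)^{-s}]),
\]
which is bounded in modulus by $\|b\|\,(s-1)\,\|[a,\varphi(L)^{-s}]\|_1\to 0$.

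The technical heart is (1.b). I would combine the Laplace representation
\[
\varphi(L)^{-s}=\frac{1}{\Gamma(s)}\int_0^\infty t^{s-1}e^{-t\varphi(L)}\,dt
\]
with Duhamel's formula $[a,e^{-t\varphi(L)}]=-\int_0^t e^{-u\varphi(L)}[a,\varphi(L)]e^{-(t-u)\varphi(L)}\,du$. The assumption $\varphi'\in L^\infty_{\rm loc}$ together with ${\rm ess-}\limsup_{x\to\infty}\varphi'(x)/\varphi(x)=0$ is precisely what allows one to dominate $[a,\varphi(L)]$ in terms of $[a,L]$: a mean-value computation in the spectral representation of $L$ bounds the matrix element $\langle\psi_j|[a,\varphi(L)]|\psi_k\rangle$ by a value of $\varphi'$ times $\langle\psi_j|[a,L]|\psi_k\rangle$, so off-diagonally $[a,\varphi(L)]$ is controlled by $\varphi'(L)^{1/2}[a,L]\varphi'(L)^{1/2}$ up to lower-order terms. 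Splitting the spectrum at a cutoff $R$ where $\varphi'/\varphi<\e$ on $(R,\infty)$, taking trace-norms, and integrating in $t$ should yield an estimate of the shape
\[
(s-1)\,\|[a,\varphi(L)^{-s}]\|_1\le C_R\,(s-1)+\e\cdot C\,\|[a,L]\|,
\]
with $C_R$ coming from the finite-rank cutoff contribution. Letting first $s\downarrow 1$ and then $\e\downarrow 0$ gives the conclusion.

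For part (2), (1.a) gives $\omega_s(I)\to 1$, so $\{\omega_s\}$ is a norm-bounded net of positive linear functionals on $\B(h)$, and Banach--Alaoglu delivers weak-$*$ cluster points $\tau$, each positive with $\tau(I)=1$, hence a state. Vanishing on $\K(h)$ follows because for trace-class $b$ we have $|\omega_s(b)|\le(s-1)\|b\|_1\|\varphi(L)^{-s}\|\to 0$, extended to all compact operators by norm continuity, giving (2.a). Property (2.b) is immediate from (1.c) for $a\in\A_L$ and $b\in\B(h)$, and extends to $a\in A=\overline{\A_L}$ by norm-continuity of $\tau$; specialising $b\in A$ gives (2.c). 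Finally, for part (3), identity (\ref{CPS38}) applied to the representation $\Omega_L(a)=Tr_\omega(a\,\varphi(L)^{-1})$ furnished by Proposition \ref{volumeform} yields
\[
\Omega_L(a)=\widetilde\omega-\lim_{r\to\infty}\frac{1}{r}Tr\bigl(a\,\varphi(L)^{-1-1/r}\bigr)=\widetilde\omega-\lim_{s\downarrow 1}\omega_s(a),
\]
exhibiting $\Omega_L$ as a particular weak-$*$ limit state from (2), hence a hypertrace and a tracial state on $A$. The main obstacle is (1.b): the effective translation of the pointwise condition ${\rm ess-}\limsup\varphi'/\varphi=0$ into a quantitative trace-norm bound requires the Duhamel-plus-spectral-cutoff bookkeeping outlined above.
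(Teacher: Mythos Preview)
Your treatment of (1.a), (1.c), (2), and (3) is correct and matches the paper: (1.a) via $\mu_n(\varphi(L)^{-1})\sim 1/n$ and comparison with $\zeta_0$; (1.c) from (1.b) by cyclicity; (2) via weak-$*$ compactness plus the obvious vanishing on compacts; (3) via the Carey--Phillips--Sukochev identity (\ref{CPS38}) applied through Proposition \ref{volumeform}.

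The gap is in (1.b). Your Duhamel route is a detour that does not simplify the problem: computing $\int_0^\infty t^{s-1}\int_0^t e^{-u\varphi(\tl_k)}e^{-(t-u)\varphi(\tl_\ell)}\,du\,dt$ just reproduces the matrix coefficient $\Gamma(s)\,(\varphi(\tl_\ell)^{-s}-\varphi(\tl_k)^{-s})/(\varphi(\tl_k)-\varphi(\tl_\ell))$, so you are back to the bare spectral expansion of $[a,\varphi(L)^{-s}]$. More seriously, the heuristic ``$[a,\varphi(L)]$ is controlled by $\varphi'(L)^{1/2}[a,L]\varphi'(L)^{1/2}$'' is unjustified: the mean-value point $\xi\in(\tl_\ell,\tl_k)$ for which $(\varphi(\tl_k)-\varphi(\tl_\ell))/(\tl_k-\tl_\ell)=\varphi'(\xi)$ has no a priori relation to $\sqrt{\varphi'(\tl_k)\varphi'(\tl_\ell)}$ (no monotonicity of $\varphi'$ is assumed), and $[a,\varphi(L)]$ is typically unbounded (already for $\varphi(x)=x^2$). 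The hypothesis $\varphi'/\varphi\to 0$ does \emph{not} control $[a,\varphi(L)]$; it controls $[a,\varphi(L)^{-s}]$, and only because the factor $\varphi(L)^{-s}$ supplies the denominator $\varphi$. Your sketch never explains how that $1/\varphi$ emerges from the Duhamel integrand before the $t$-integration.

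The paper avoids Duhamel entirely. Using the discrete chain rule
\[
\pi_k\,[a,\varphi(L)^{-s}]\,\pi_\ell=\frac{\varphi(\tl_\ell)^{-s}-\varphi(\tl_k)^{-s}}{\tl_k-\tl_\ell}\,\pi_k\,[L,a]\,\pi_\ell,
\]
one splits $[a,\varphi(L)^{-s}]=X^+(s)+X^-(s)$ into the pieces with $k>\ell$ and $k<\ell$, and \emph{factors} $X^+(s)=Y_1(s)\,\varphi(L)^{-s/2}$ with
\[
Y_1(s)=\sum_{k>\ell\ge 1}\frac{\varphi(\tl_k)^s-\varphi(\tl_\ell)^s}{\varphi(\tl_k)^s(\tl_k-\tl_\ell)}\,\pi_k\,[L,a]\,\pi_\ell\,\varphi(L)^{-s/2}.
\]
The key elementary inequality is
\[
\frac{\varphi(\tl_k)^s-\varphi(\tl_\ell)^s}{\varphi(\tl_k)^s(\tl_k-\tl_\ell)}
=\frac{1}{\varphi(\tl_k)^s}\int_{\tl_\ell}^{\tl_k}\frac{s\,\varphi(x)^{s-1}\varphi'(x)}{\tl_k-\tl_\ell}\,dx
\le s\cdot{\rm ess\text{-}sup}_{x\ge\tl_\ell}\frac{\varphi'(x)}{\varphi(x)},
\]
using $\varphi(x)\le\varphi(\tl_k)$ on the interval; this is where $\varphi'/\varphi$ enters cleanly, and it is exactly what your sketch is missing. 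A Schur-multiplier bound then gives
\[
\|Y_1(s)\|_2\le s\Bigl({\rm ess\text{-}sup}_{x\ge\tl_N}\tfrac{\varphi'}{\varphi}\Bigr)\,\|[L,a]\|\,Tr(\varphi(L)^{-s})^{1/2}
\]
for any cutoff $N$ (with a bounded finite-rank remainder), and Cauchy--Schwarz yields $Tr|X^+(s)|\le\|Y_1(s)\|_2\,\|\varphi(L)^{-s/2}\|_2=o((s-1)^{-1/2})\cdot O((s-1)^{-1/2})=o((s-1)^{-1})$. The lower-triangular piece $X^-(s)=\varphi(L)^{-s/2}Z_1(s)$ is handled symmetrically. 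Your cutoff idea appears here too, but only to peel off a harmless finite-rank term; the substance is the factorisation and the quotient inequality above.
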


 The first corollary is just a variation on the conclusions, with the same assumptions :
 \begin{cor} \label{residueF} Suppose that the counting function satisfies
\[
N_L(x)\sim \varphi(x)\qquad x\to+\infty
\]
for some continuous, increasing function $\varphi:\R_+\to\R_+$ such that
\begin{equation}
\varphi'\in L^\infty_{\rm loc}(\R_+),\qquad \label{subexp} {\rm ess-}\limsup_{x\to +\infty} \varphi'(x)/\varphi(x)=0.
\end{equation}
Then the following limit properties hold true
\vskip0.2truecm\noindent
(1.a) For $s>1$, $\rho(L)^{s}$ is trace-class and $ \lim_{s\downarrow 1} (s-1)\, Tr\Big(\,\rho(L)^{s}\,\Big)=1$\,.

(1.b) For $a \in \A_L$, one has
\begin{equation}\label{Trconvergence}
\lim_{s\downarrow 1} \big(s-1)\, Tr\Big(\,\big|\,\big[ a\,,\,\rho(L)^{s}\big]\,\big|\,\Big) =0
\end{equation}

(1.c) For $a\in \A_L$ and $b\in \B(h)$, one has
\begin{equation}\label{stoone}
\lim_{s\to 1} \big(s-1)\, Tr\Big( (ab-ba)\rho(L)^{s}\Big) =0\,.
\end{equation}

\medskip

For $s>1$ define the linear functionals
\[
\omega_s(b):=(s-1)Tr(b\,\rho(L)^{s})\qquad b\in\B(h).
\]
As $s\downarrow 1$, the limit point set of $\{\omega_s\in\B(h)^*:s>1\}$ is not empty and any such limit linear form $\tau$ is a state on $\B(h)$ with the following properties :
\vskip0.2truecm\noindent
 (2.a) $\tau$ vanishes on the algebra $\K(h)$ of compact operators\,;
 \vskip0.2truecm\noindent
 (2.b) $\tau$ is a hypertrace on the uniforme closure $A$ of the Lipschitz algebra $\A_L$\,:
 \begin{equation}
 \tau(ba)=\tau(ab)\,,\; a\in A\,,\, b\in \B(h)\,;
 \end{equation}
\vskip0.2truecm\noindent
 (2.c) The restriction of $\tau$ to $A$ is a tracial state.

 \medskip 3. Any volume form $\Omega_L(a)=Tr_\omega(a\,\rho(L))$ on $\B(h)$ is an hypertrace and a tracial state on $A$ (with $ \omega$ as in subsection \ref{DixmierTraces}).
 \end{cor}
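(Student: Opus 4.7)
The Corollary reproduces Theorem \ref{residue} verbatim with $\varphi(L)^{-s}$ replaced everywhere by $\rho(L)^s = N_L(L)^{-s}$, so my plan is to reduce every assertion of the Corollary to its counterpart in Theorem \ref{residue} by proving a single trace-norm estimate on the difference, namely
\[
\lim_{s\downarrow 1}\,(s-1)\,\bigl\|\rho(L)^s-\varphi(L)^{-s}\bigr\|_1 = 0. \qquad (\star)
\]
Given $(\star)$, all numbered items fall out by routine manipulations and a direct appeal to the corresponding assertions of Theorem \ref{residue}.

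To establish $(\star)$, I would set $v(x):=\varphi(x)/N_L(x)$ on $sp(L)\setminus\{0\}$, so that $\rho(L)^s=v(L)^s\,\varphi(L)^{-s}$. By Proposition \ref{volumeform} (using $N_L\sim\varphi$ with $\varphi$ continuous), $v$ is bounded above and bounded away from $0$ on $sp(L)\setminus\{0\}$, and $v(\lambda_n)\to 1$ as $n\to\infty$. Uniform continuity of $x\mapsto x^s$ on each compact subset of $(0,\infty)$, uniformly for $s$ in a small neighbourhood of $1$, then yields a constant $C$ with $|v(\lambda_n)^s-1|\le C|v(\lambda_n)-1|$ for all such $s$ and all $n$. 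Since $\rho(L)^s-\varphi(L)^{-s}$ is diagonal in an eigenbasis of $L$ with eigenvalues $\varphi(\lambda_n)^{-s}\big(v(\lambda_n)^s-1\big)$, for fixed $\varepsilon>0$ I would pick $N_\varepsilon$ so that $|v(\lambda_n)^s-1|<\varepsilon$ for $n>N_\varepsilon$ and $s$ close to $1$, and split
\[
\bigl\|\rho(L)^s-\varphi(L)^{-s}\bigr\|_1 \;\le\; \sum_{n\le N_\varepsilon}\varphi(\lambda_n)^{-s}|v(\lambda_n)^s-1|\;+\;\varepsilon\, Tr(\varphi(L)^{-s}).
\]
The finite sum is bounded uniformly for $s$ near $1$, hence vanishes after multiplication by $(s-1)$; the second term contributes at most $\varepsilon\cdot\lim_{s\downarrow 1}(s-1)Tr(\varphi(L)^{-s})=\varepsilon$ by Theorem \ref{residue}\,(1.a). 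Letting $\varepsilon\downarrow 0$ yields $(\star)$.

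With $(\star)$ in hand, (1.a) is immediate from $|(s-1)Tr(\rho(L)^s)-(s-1)Tr(\varphi(L)^{-s})|\le (s-1)\|\rho(L)^s-\varphi(L)^{-s}\|_1$ combined with Theorem \ref{residue}\,(1.a). For (1.b), I would decompose $[a,\rho(L)^s]=[a,\varphi(L)^{-s}]+[a,\rho(L)^s-\varphi(L)^{-s}]$ and bound the second commutator in trace norm by $2\|a\|\,\|\rho(L)^s-\varphi(L)^{-s}\|_1$; $(\star)$ and Theorem \ref{residue}\,(1.b) then yield the conclusion. Item (1.c) is an immediate consequence of (1.b) via the trace-cyclicity identity $Tr([a,b]X)=-Tr(b[a,X])$, which gives $(s-1)\,|Tr([a,b]\rho(L)^s)|\le\|b\|\,(s-1)\,\|[a,\rho(L)^s]\|_1\to 0$. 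Part (2) follows the template of Theorem \ref{residue}\,(2): the positive functionals $\omega_s$ have norm $\|\omega_s\|=\omega_s(I)=(s-1)Tr(\rho(L)^s)\to 1$, so Banach--Alaoglu supplies weak-$*$ limit points which are states on $\B(h)$; vanishing on $\K(h)$ comes from finite-rank approximation, for which $(s-1)Tr(K\rho(L)^s)\to 0$ trivially; and (2.b)--(2.c) follow from (1.c) together with density of $\A_L$ in $A$. Finally (3) is obtained by applying (\ref{CPS38}) to write $\Omega_L(b)=\widetilde\omega\text{-}\lim_r\omega_{1+1/r}(b)$, after which the commutator identity of (1.c) passes through the $\widetilde\omega$-limit. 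The only nontrivial step I expect is the uniform-in-$s$ estimate on $|v(\lambda_n)^s-1|$ underlying $(\star)$; everything else is either a standard trace-ideal manipulation or a direct appeal to Theorem \ref{residue}.
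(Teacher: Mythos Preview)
Your proof is correct and follows essentially the same route as the paper's. The paper's argument is three lines: write $\varphi^{-1}=N_L^{-1}(1+g)$ with $g$ vanishing at infinity, so that $\varphi(L)^{-1}=\rho(L)(I+T)$ with $T=g(L)$ compact and commuting with $L$, and then ``apply Lemma \ref{compact} repeatedly'' to transfer each item of Theorem \ref{residue} from $\varphi(L)^{-s}$ to $\rho(L)^s$. Your single trace-norm estimate $(\star)$ is exactly the uniform, item-independent version of that transfer step; it packages what the paper leaves as ``repeated application'' into one clean inequality, but the underlying mechanism---$N_L\sim\varphi$ forces $\rho(L)^s$ and $\varphi(L)^{-s}$ to differ by something negligible at the scale $(s-1)^{-1}$---is identical.
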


The second corollary provides a sufficient condition for the assumptions above to hold true. It requires that the relative multiplicities vanish faster than the spectral gaps of $L$:

 \begin{cor}\label{sufficientF}
Suppose that the following conditions on the spectrum of $L$ are satisfied :
\[
\lim_{k\to \infty} m_{k}/M_k=0\quad \text{ and }\quad\frac{m_k}{M_k}=
o(\tl_{k+1}(L)-\tl_k(L))\,\,{\it as}\,\,x\to+\infty\,.
\]
Then the assumptions of Corollary \ref{residueF} are satisfied, so that all of its conclusions hold true. In particular, they hold true if
$N_L$ is asymptotically continuous and the spectral gaps are uniformly bounded away from zero
\[
\liminf_{k\to \infty} (\tl_{k+1}(L)-\tl_k(L))>0.
\]
\end{cor}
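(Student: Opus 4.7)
The idea is to produce a concrete $\varphi$ fulfilling all hypotheses of Corollary \ref{residueF} directly from the two spectral conditions, and then invoke that corollary. The natural candidate is the continuous, piecewise-affine interpolation $\varphi:\R_+\to\R_+$ with $\varphi(0)=0$ and $\varphi(\tl_k)=M_k$ for every $k\ge 1$, affine on each interval $[\tl_{k-1},\tl_k]$ (setting $\tl_0:=0$). Then $\varphi$ is continuous and strictly increasing, and $\varphi'$ is the piecewise-constant function equal to $m_k/(\tl_k-\tl_{k-1})$ on $(\tl_{k-1},\tl_k)$, so $\varphi'\in L^\infty_{\rm loc}(\R_+)$.

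The asymptotic equivalence $N_L(x)\sim\varphi(x)$ at infinity follows from the first hypothesis $m_k/M_k\to 0$ (equivalently $M_{k-1}/M_k\to 1$) by the explicit quotient computation already carried out at the end of the proof of Proposition \ref{continuity} ii).

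For the crucial derivative estimate, on $(\tl_{k-1},\tl_k)$ we use $\varphi(x)\ge M_{k-1}$ to get
\[
\frac{\varphi'(x)}{\varphi(x)}\le\frac{m_k}{M_{k-1}(\tl_k-\tl_{k-1})}=\frac{M_k}{M_{k-1}}\cdot\frac{m_k/M_k}{\tl_k-\tl_{k-1}}.
\]
The factor $M_k/M_{k-1}$ tends to $1$, while the second factor tends to $0$ by the second hypothesis (read with a one-step index shift as the assertion that each relative multiplicity is $o$ of the adjacent spectral gap). Hence ${\rm ess}\text{-}\limsup_{x\to+\infty}\varphi'(x)/\varphi(x)=0$, so $\varphi$ meets all hypotheses of Corollary \ref{residueF}, and all of its conclusions follow. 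The ``in particular'' claim is then immediate: asymptotic continuity of $N_L$ forces $m_k/M_k\to 0$ by Proposition \ref{continuity} i), and if $\delta:=\liminf_k(\tl_{k+1}-\tl_k)>0$, then $(m_k/M_k)/(\tl_{k+1}-\tl_k)\le\delta^{-1}(m_k/M_k)\to 0$ yields the second condition.

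The principal (minor) difficulty lies in the derivative estimate, namely aligning the forward gap $\tl_{k+1}-\tl_k$ in the hypothesis with the backward gap $\tl_k-\tl_{k-1}$ that arises naturally when differentiating the piecewise-affine $\varphi$. This is handled either by a one-step reindexing, or equivalently by interpolating on the intervals $[\tl_k,\tl_{k+1}]$ so that the slope becomes $m_{k+1}/(\tl_{k+1}-\tl_k)$ and $\varphi\ge M_k$; then exploiting $M_k\sim M_{k+1}$ lets one apply the hypothesis in its original form without modification.
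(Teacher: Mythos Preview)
Your proposal is correct and follows essentially the same approach as the paper: the paper also builds the continuous piecewise-affine interpolant $\varphi$ with $\varphi(\tl_k)=M_k$, invokes Proposition \ref{continuity} for $N_L\sim\varphi$, and bounds $\varphi'/\varphi$ on each interval by $\bigl(\tfrac{M_{k+1}}{M_k}-1\bigr)/(\tl_{k+1}-\tl_k)$ to conclude. Your treatment is in fact slightly more careful than the paper's in explicitly addressing the forward/backward gap index alignment.
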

Passing from the operator $L$ to a monotone functional calculus $f(L)$ of it, multiplicities remain unchanged but gaps $f(\tl_{k+1}(L))-f(\tl_k(L))$ may vary. However, conditions involving gaps in the Corollary above are still satisfied, for example, if $f\in C^1(\R_+)$ and $\inf f'>0$.

\begin{ex}({\bf The Toeplitz C$^*$-algebra II})
Let $\mathbb{P}=\{2,3,5,\cdots\}$ be the set of prime numbers and let $S':l^2(\mathbb{P})\to l^2(\mathbb{P})$ be the shift operator defined as $(S'u)(2)=0$ and
$(S'u)(p)=u(p')$ where $p'\in\mathbb{P}$ denotes the greatest prime strictly less than $p\in\mathbb{P}$ for $p\ge 3$. $S'$ is an isometry $S'^*S'=I$ which generates the Toeplitz C$^*$-algebra $A'\subset\B(l^2(\mathbb{P}))$.\\
Let $J$ be the operator on $l^2(\mathbb{P})$ defined by
\[
(Ju)(p):=pu(p)\qquad p\in \mathbb{P},\quad u\in l^2(\mathbb{P})\, .
\]
We have ${\rm Sp}(J)=\mathbb{P}$, all multiplicities equal to one and, by the Prime Number Theorem, $N_J(x)\sim \varphi(x):=x/{\rm Log} (x)$ for $x\to +\infty$ so that $N_J$ is asymptotically continuous and $\rho(J)$ is a density. Since $\varphi'(x)=({\rm Log}( x))^{-1}-({\rm Log} (x))^{-2}> 0$ for $x>3$, $\varphi$ is strictly increasing and since $\varphi'(x)/\varphi(x)=x^{-1}-(x{\rm Log} (x))^{-1}\to 0$ as $x\to+\infty$, by Theorem \ref{residue} we have that the volume state on $\B(l^2(\mathbb{P}))$
\[
\Omega_J(a)=Tr_\omega(a\varphi(J)^{-1})=Tr_\omega(a\rm{Log} J/J)\qquad a\in A'
\]
is an hypertrace on the Toeplitz C$^*$-algebra, vanishing on the ideal $\K(l^2(\mathbb{P}))$. Since for $s>1$\\
$Tr(S'^k\varphi(J)^{-s})=\sum_{p\in\mathbb{P}}(\delta_p |S'^k\varphi(J)^{-s}\delta_p)=\sum_{p\in\mathbb{P}}(p/{\rm Log} (p))^{-s}(\delta_p|S'^k\delta_p)=\delta_{k,0}\cdot \sum_{p\in\mathbb{P}}(p/{\rm Log} (p))^{-s}\\
\sim\delta_{k,0}\cdot(s-1)^{-1}$ as $s\to 1^+$, by formula \ref{CPS38} we have $\Omega_J(S'^k)=\delta_{k,0}$ for any Dixmier ultrafilter $\omega$. Analogously to the situation of section \ref{toeplitz1}, one has $A'/\K(\mathbb{P})\simeq C(\mathbb{T})$ and the induced measure on the circle $\mathbb{T}$ is again the Haar probability measure.
\vskip0.2truecm\noindent
To compare the situation the described in \ref{toeplitz1} to the present one, let us notice first that, since
\[
[L,S]e_n=LSe_n-SLe_n=Le_{n+1}-nSe_n=(n+1)e_{n+1}-ne_{n+1}=e_{n+1}=Se_n\qquad n\in\N,
\]
we have $[L,S]=S$ so that the $^*$-algebra $\mathcal{A}_L$ generated by $S$ contains all commutators $[L,a]$ for any $a\in \mathcal{A}_L$. On the other hand, the commutator $[J,S']$ is unbounded since
\[
([J,S']u)(p)=p(S'u)(p)-S'(Ju)(p)=pu(p')-(Ju)(p')=(p-p')u(p')\qquad 3\le p\in\mathbb{P}
\]
and it is known that the {\it prime gap} $g(p'):=p-p'$ can be arbitrarly large. Moreover, $[\rm{Log} J,S']$ is compact. In fact for $3\le p\in\mathbb{P}$
\[
\begin{split}
([\rm{Log} J,S']u)(p)&=(\rm{Log} p)(S'u)(p)-S((\rm{Log} J)u)(p)=pu(p')-((\rm{Log} J) u)(p')=(\rm{Log} (p/p'))u(p')\\
&=(\rm{Log} (p/p'))(S'u)(p)
\end{split}
\]
and it is known that $\lim_{p\to +\infty} p/p'=\lim_{p'\to +\infty} (1-g(p')/p')=1$. However, A.E. Ingham \cite{Ing} showed that there exists $\alpha\in (0,3/8)$ such that $p-p'\le p'^{1-\alpha}$ for sufficiently large $p$. Hence, for this fixed value of $\alpha$ and for sufficiently large $p$ we have
\[
0\le p^\alpha-p'^\alpha=p'^\alpha\Bigl[\Bigl(1+\frac{p-p'}{p'}\Bigr)^\alpha-1\Bigr]\le p'^\alpha\alpha\frac{p-p'}{p'}=\alpha\frac{p-p'}{p'^{1-\alpha}}\le \alpha
\]
so that, for some constant $C\ge \alpha $, we have $0\le p^\alpha-p'^\alpha\le C$ for all $p\in\mathbb{P}$.
Then $([J^\alpha,S']u)(2)=J^\alpha(S'u)(2)-S'(J^\alpha u)(2)=0$ and for $3\le p\in\mathbb{P}$
\[
\begin{split}
([J^\alpha,S']u)(p)&=p^\alpha(S'u)(p)-S'(Ju)(p)=p^\alpha u(p')-(Ju)(p')=(p^\alpha -p'^\alpha)a(p')\\
&= (p^\alpha -p'^\alpha)\cdot (S'u)(p).
\end{split}
\]
It follows that $\|[J^\alpha,S']\|\le C$ and that all commutators $[J^\alpha,a]$ are bounded for any $a$ in the $^*$-subalgebra $\A_{J^\alpha}'\subset A'$ generated by $S'$. Notice that $\Omega_J=\Omega_{J^\alpha}$ since $J^\alpha$ is an increasing, unbounded function of $J$. In conclusion, even if the C$^*$-algebras $A$ and $A'$ are isomorphic and their hypertraces correspond $\Omega_L\simeq \Omega_J$, these structures differ from a metric point of view since their {\it Lipschitz} algebras are not isomorphic $\A_L\nsim \A_{J^\alpha}'$.
\end{ex}

\begin{ex}
The Dirac operator $D$ of a spectral triple $(\A,h,D)$ defined on a C$^*$-algebra $A$, represented in a Hilbert space $h$, and associated to a filtration of $h$ as in subsection 2.3, has spectrum $\N$. All spectral gaps are equal to $1$ so that the second condition in Corollary 6.3 is satisfied. As soon as the growth of the filtration satisfies $\lim_{k\to +\infty}M_{k+1}/M_k=1$, the spectral weight $\rho(D)$ is then a density and the volume states $\Omega_D$ are hypertraces for any Dixmier ultrafilter $\omega$.
\end{ex}

\subsection{Relationship with subexponential growth.}

Here, we assume that the assumptions of Theorem \ref{residue} hold true.  As first consequence, $L$ has subexponential spectral growth rate.
\begin{lem} For any $\beta>0$, the partition function is finite $Z_L(\beta):=Tr(e^{-\beta L})<+\infty$.
\end{lem}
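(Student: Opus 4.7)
The plan is to convert the hypothesis on $\varphi'/\varphi$ into subexponential growth of $\varphi$, transfer that growth to $N_L$ via the asymptotic equivalence $N_L\sim \varphi$, and then express the partition function as a Stieltjes integral against $dN_L$ that converges by a direct comparison.

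First, I would fix $\beta>0$ and $0<\e<\beta$. By the hypothesis $\text{ess-}\limsup_{x\to+\infty}\varphi'(x)/\varphi(x)=0$ and the local boundedness of $\varphi'$, there exists $x_0\ge 1$ such that $\varphi'(x)/\varphi(x)\le \e$ for a.e.\ $x\ge x_0$. Since $\varphi$ is continuous and $\varphi'\in L^\infty_{\mathrm{loc}}$, the function $\log\varphi$ is locally absolutely continuous on $(0,+\infty)$, so integrating the inequality $(\log\varphi)'(x)\le\e$ from $x_0$ to $x$ yields
\[
\varphi(x)\le \varphi(x_0)\,e^{\e(x-x_0)}\qquad x\ge x_0,
\]
i.e.\ $\varphi$ has subexponential growth. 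Combined with $N_L(x)\sim \varphi(x)$, there is a constant $C>0$ (depending on $\e$ and $\beta$) such that
\[
N_L(x)\le C\,e^{\e x}\qquad x\ge 0.
\]

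Next I would write the partition function as a Lebesgue--Stieltjes integral against the (nondecreasing, right-continuous, piecewise constant) counting function:
\[
Z_L(\beta)=\mathrm{Tr}(e^{-\beta L})=\sum_{k\ge 1} m_k\,e^{-\beta\tl_k}=\int_0^{+\infty} e^{-\beta x}\,dN_L(x).
\]
Integration by parts (valid for $N_L$ of polynomial, let alone subexponential, growth) gives
\[
Z_L(\beta)=\beta\int_0^{+\infty} e^{-\beta x} N_L(x)\,dx,
\]
and the bound $N_L(x)\le C e^{\e x}$ together with the choice $\e<\beta$ makes the right-hand integral finite:
\[
Z_L(\beta)\le \beta C\int_0^{+\infty} e^{-(\beta-\e)x}\,dx=\frac{\beta C}{\beta-\e}<+\infty,
\]
which is the desired conclusion.

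No real obstacle is expected here: the only delicate point is justifying $\varphi(x)\le \varphi(x_0)e^{\e(x-x_0)}$ from the essential bound on $\varphi'/\varphi$, and this is immediate from the local absolute continuity of $\log\varphi$ supplied by $\varphi'\in L^\infty_{\mathrm{loc}}(\R_+)$ and positivity of $\varphi$.
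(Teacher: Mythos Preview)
Your argument is correct and essentially self-contained. The paper reaches the same conclusion by a slightly different route: instead of integrating $(\log\varphi)'\le\varepsilon$, it observes that the a.e.\ derivative of $x\mapsto e^{-\beta x}\varphi(x)$, namely $(\varphi'(x)-\beta\varphi(x))e^{-\beta x}$, is eventually negative, so $e^{-\beta x}\varphi(x)$ is eventually nonincreasing and hence $e^{-2\beta x}\varphi(x)\to 0$; from this it deduces $\limsup_{n}\sqrt[n]{\varphi(n)}\le 1$ and then invokes an external result (\cite{CS}, Lemma~3.13) to conclude $Z_L(\beta)<+\infty$. Your approach has the advantage of avoiding that external reference by computing the Stieltjes integral $\int_0^\infty e^{-\beta x}\,dN_L(x)$ directly via integration by parts, which is arguably more transparent. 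The paper's monotonicity trick, on the other hand, sidesteps the absolute-continuity justification for $\log\varphi$ that you (correctly) flag as the one delicate point.
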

\begin{proof}
Condition (\ref{subexp}) on $\varphi$ implies that, for any fixed $\beta>0$, the nonnegative function $x\to e^{-\beta x}\varphi(x)$ has a derivative $(\varphi'(x)-\beta\varphi(x))e^{-\beta x}$ which is eventually negative so that it admits a limit at $+\infty$. Hence, for all fixed
$\beta>0$, $\lim_{x\to +\infty} e^{-2\beta x}\varphi(x)=0$. In particular, $\lim_{n\to \infty} e^{-2\beta n}\varphi(n)\to 0$, so that $\limsup_{n\to \infty} \sqrt[n]{\varphi(n)}\leq e^{2\beta}$. Since this holds for any $\beta>0$, we get $\limsup_{n\to \infty}\sqrt[n]{\varphi(n)}\leq 1$ (and indeed $\lim_{n\to \infty}\sqrt[n]{\varphi(n)}=1$). Appling \cite{CS} Lemma 3.13 we get the result.
\end{proof}

\subsection{Preparatory results.} In this section we assume the hypotheses of Theorem \ref{residue}.

\smallskip
\begin{lem}\label{res0} For $s>1$ \,, $\varphi(L)^{-s}$ is  trace class, with
$\lim_{s\to 1+} (s-1) Tr(\varphi(L)^{-s})=1$.
\end{lem}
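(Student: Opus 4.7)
The plan is to reduce Lemma \ref{res0} to the corresponding residue identity for $\rho(L)$ in Proposition \ref{asymptotics}(iii.c), which applies in our setting because the continuous majorant $\varphi$ with $N_L\sim\varphi$ forces asymptotic continuity of $N_L$ and hence $\lim_k M_{k+1}/M_k=1$ by Proposition \ref{continuity}. The comparison is carried out at the level of the eigenvalue sequences off $\ker L$.

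Since $\varphi$ is increasing, the eigenvalues of $\varphi(L)^{-s}$ are $\varphi(\tl_k)^{-s}$ with multiplicities $m_k$, while those of $\rho(L)^s$ are $M_k^{-s}$ with the same multiplicities. Evaluating $N_L(x)\sim\varphi(x)$ at $x=\tl_k$ gives $M_k\sim\varphi(\tl_k)$, so the sequence $r_k:=M_k/\varphi(\tl_k)$ converges to $1$ and is in particular bounded by some $C\ge 1$. For $s>1$ this yields
\[
Tr\bigl(\varphi(L)^{-s}\bigr)=\sum_k m_k\,r_k^s\,M_k^{-s}\le C^s\,Tr\bigl(\rho(L)^s\bigr)<+\infty,
\]
the finiteness on the right coming from Proposition \ref{asymptotics}(ii); this settles the trace-class assertion.

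For the residue at $s=1$ I would write
\[
Tr\bigl(\varphi(L)^{-s}\bigr)-Tr\bigl(\rho(L)^s\bigr)=\sum_k m_k\,M_k^{-s}\bigl(r_k^s-1\bigr)
\]
and control the tail of the sum uniformly in $s$. Given $\delta>0$, pick $K$ with $|r_k-1|<\delta$ for $k\ge K$; the mean value theorem then furnishes a constant $C_\delta$, depending only on $\delta$ and on a fixed neighbourhood of $s=1$, such that $|r_k^s-1|\le C_\delta\,\delta$ for all $k\ge K$ and $s$ near $1$. Hence the tail of the series, after multiplication by $(s-1)$, is dominated by $C_\delta\,\delta\,(s-1)\,Tr\bigl(\rho(L)^s\bigr)$, whereas the finite head contributes $o(1)$ as $s\downarrow 1$. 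By Proposition \ref{asymptotics}(iii.c) the tail bound has $\limsup_{s\downarrow 1}$ at most $2C_\delta\,\delta$; letting $\delta\downarrow 0$ shows $(s-1)\bigl[Tr(\varphi(L)^{-s})-Tr(\rho(L)^s)\bigr]\to 0$, and combining with the identity $\lim_{s\downarrow 1}(s-1)Tr(\rho(L)^s)=1$ from Proposition \ref{asymptotics}(iii.c) yields the stated limit.

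The only delicate point is the uniform control of $|r_k^s-1|$ in $s$; since $r_k$ eventually lies in a fixed compact subinterval of $(0,+\infty)$ and $s$ ranges in a fixed compact subset of $(1,2)$, this is elementary and the rest is bookkeeping. Note that the stronger sub-exponential condition $\varphi'/\varphi\to 0$ from Theorem \ref{residue} is not invoked for this first lemma; it will enter only for the subsequent commutator estimates.
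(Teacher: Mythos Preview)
Your proof is correct and follows essentially the same route as the paper's: both use Proposition~\ref{continuity} to get $M_k/M_{k-1}\to 1$, then exploit $N_L(\tl_k)=M_k\sim\varphi(\tl_k)$ to transfer the residue result for $\rho(L)$ (Proposition~\ref{asymptotics}(iii)) to $\varphi(L)^{-1}$. The paper phrases this more tersely as the eigenvalue asymptotic $\lambda_n(\varphi(L)^{-1})\sim 1/n$ and says ``the result follows easily'', whereas you spell out the trace comparison via the ratios $r_k=M_k/\varphi(\tl_k)$; the substance is the same.
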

\begin{proof}
The assumptions made imply that $\varphi$ is a continuous function, which in turns implies
$\lim_k M_k/M_{k-1}=1$ (by Proposition \ref{continuity}). Then, by Proposition \ref{asymptotics}, we get
$N_L(\lambda_n(L))\sim n$ as $n\to+\infty$ (eigenvalues numbered with repetition according to the multiplicity) and thus
$\la_n(\varphi(L)^{-1})\sim 1/n$. The result follows easily.
\end{proof}

The assumptions on $\varphi$ lead to the following technical result.
\begin{lem}\label{quotients} For $s>1$ and $N\in \N^*$, one has
\begin{equation} \sup\nolimits_{k>\ell\geq N} \frac{\varphi(\tl_k(L))^s-\varphi(\tl_\ell(L))^s}{\varphi(\tl_k(L))^s(\tl_k(L)-\tl_\ell(L))} \leq  \text{ess-}\sup\nolimits_{x\geq \tl_N(L)} s\, \frac{\varphi'(x)}{\varphi(x)}\,.
\end{equation}
\end{lem}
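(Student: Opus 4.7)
The plan is to express the numerator as an integral of the derivative of $\varphi^s$ and then bound the integrand by pulling out $\varphi(\tl_k(L))^s$ as an upper bound coming from monotonicity of $\varphi$.

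First I would note that the assumptions $\varphi'\in L^\infty_{\rm loc}(\R_+)$ and $\varphi$ increasing continuous guarantee that $\varphi$ is locally absolutely continuous on $\R_+$, hence so is $\varphi^s$ for any $s>1$ (on the range where $\varphi>0$, which occurs eventually since $\varphi\sim N_L\to+\infty$; one may discard a bounded initial segment without loss). Fix $k>\ell\geq N$ and write
\[
\varphi(\tl_k(L))^s-\varphi(\tl_\ell(L))^s=\int_{\tl_\ell(L)}^{\tl_k(L)} s\,\varphi(x)^{s-1}\varphi'(x)\,dx=\int_{\tl_\ell(L)}^{\tl_k(L)} s\,\frac{\varphi'(x)}{\varphi(x)}\,\varphi(x)^s\,dx.
\]

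Next, since $\varphi$ is increasing, for almost every $x\in[\tl_\ell(L),\tl_k(L)]$ one has $\varphi(x)^s\leq \varphi(\tl_k(L))^s$. Inserting this bound and pulling the constant outside,
\[
\varphi(\tl_k(L))^s-\varphi(\tl_\ell(L))^s \leq \varphi(\tl_k(L))^s\int_{\tl_\ell(L)}^{\tl_k(L)} s\,\frac{\varphi'(x)}{\varphi(x)}\,dx.
\]
The integrand is bounded above almost everywhere by $\text{ess-}\sup_{x\geq\tl_\ell(L)} s\,\varphi'(x)/\varphi(x)$, and since $\tl_\ell(L)\geq\tl_N(L)$, this essential supremum is in turn dominated by $\text{ess-}\sup_{x\geq\tl_N(L)}s\,\varphi'(x)/\varphi(x)$. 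Therefore
\[
\varphi(\tl_k(L))^s-\varphi(\tl_\ell(L))^s \leq \varphi(\tl_k(L))^s(\tl_k(L)-\tl_\ell(L))\cdot\text{ess-}\sup_{x\geq\tl_N(L)}s\,\frac{\varphi'(x)}{\varphi(x)}.
\]
Dividing both sides by the positive quantity $\varphi(\tl_k(L))^s(\tl_k(L)-\tl_\ell(L))$ yields the claimed inequality after taking the supremum over $k>\ell\geq N$.

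There is really no serious obstacle here; the one point requiring care is the justification of the fundamental theorem of calculus for $\varphi^s$, which is immediate from local absolute continuity of $\varphi$ together with the chain rule for absolutely continuous functions composed with the smooth map $u\mapsto u^s$ on $(0,\infty)$. The monotonicity of $\varphi$ is the only property needed to replace $\varphi(x)^s$ by its value at the right endpoint, and the passage from essential supremum over $[\tl_\ell(L),\infty)$ to essential supremum over $[\tl_N(L),\infty)$ is a monotone restriction.
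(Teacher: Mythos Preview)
Your proof is correct and follows essentially the same approach as the paper: express the numerator via the fundamental theorem of calculus as $\int_{\tl_\ell}^{\tl_k} s\,\varphi(x)^{s-1}\varphi'(x)\,dx$, use monotonicity of $\varphi$ to replace $\varphi(x)^s$ by $\varphi(\tl_k)^s$, and bound the remaining factor $s\,\varphi'(x)/\varphi(x)$ by its essential supremum over $[\tl_N(L),\infty)$. Your additional remark on local absolute continuity of $\varphi^s$ makes explicit a point the paper leaves implicit.
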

\begin{proof}
To short notation set $\la_k:=\la_k(L)$, etc.. Keeping in mind the fact that $\varphi$ is increasing, we write for $k>\ell\geq N$
\begin{equation*}\begin{split} \frac{\varphi(\tl_k)^s-\varphi(\tl_\ell)^s}{\varphi(\tl_k)^s} &= \int_{\tl_\ell}^{\tl_k} \frac{s\varphi(x)^{s-1}\varphi'(x)}{\varphi(\tl_k)^s}dx \\
&\leq  \int_{\tl_\ell}^{\tl_k} \frac{s\varphi(x)^{s-1}\varphi'(x)}{\varphi(x)^s}dx \\
&\leq (\tl_k-\tl_\ell)\, \text{ess-}\,\sup\nolimits_{\tl_\ell\leq x \leq \tl_k} s\,\varphi'(x)/\varphi(x) \\
&\leq (\tl_k-\tl_\ell)\, \text{ess-}\,\sup\nolimits_{\tl_N\leq x } s\,\varphi'(x)/\varphi(x) \,.
\end{split}\end{equation*}
\end{proof}

\smallskip
Let $L^2(h)$ be the space of Hilbert-Schmidt operators on $h$ with norm $||\Phi||_{2}=Tr(\Phi^*\Phi)^{1/2}$ and corresponding scalar product $\langle\Psi,\Phi\rangle_2=Tr(\Psi^*\Phi)$. For $\ell\ge 1$, let us denote by $\pi_\ell$ the orthogonal projection onto the eigenspace corresponding to the eigenvalue $\lambda_\ell (L)$.
\begin{lem}\label{stime}
Let $\{\alpha_{k,\ell}\}_{k,\ell \geq 1}\subset\C$ be a bounded set and $T$ a bounded operator on $h$. Then
\[
\sum_{k,\ell} \alpha_{k,l} \,\pi_k T \pi_\ell\, \varphi(L)^{-s/2},\qquad s>1,
\]
is a Hilbert-Schmidt operator and the following estimate holds true :
\begin{equation}\label{HS}
\left\| \sum_{k,\ell} \alpha_{k,l} \pi_k T \pi_\ell \,\varphi(L)^{-s/2}\right\|_2 \leq \Bigl(\sup_{k,\ell} |\alpha_{k,\ell}|\Bigr)\cdot ||T|| \cdot Tr\big(\varphi(L)^{-s}\big)^{1/2}\,.
\end{equation}
\end{lem}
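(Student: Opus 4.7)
The plan is to exploit the fact that, in the enumeration with multiplicity, the projections $\pi_\ell$ are the rank-one spectral projections of $L$ onto an orthonormal eigenbasis $\{e_\ell\}$, so that $\varphi(L)^{-s/2}$ commutes with every $\pi_\ell$ and acts on its range by the scalar $\varphi(\lambda_\ell(L))^{-s/2}$. In particular one has the orthogonality $\pi_k\pi_{k'} = \delta_{k,k'}\pi_k$. Lemma \ref{res0} guarantees that for $s>1$ the series $\sum_\ell \varphi(\lambda_\ell(L))^{-s}$ converges to $Tr(\varphi(L)^{-s})<+\infty$.

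To make sense of the (a priori formal) double sum, I would first work with the finite truncations $S_N := \sum_{k,\ell\le N} \alpha_{k,\ell}\,\pi_k T \pi_\ell\,\varphi(L)^{-s/2}$ and compute $\|S_N\|_2^2 = Tr(S_N^*S_N)$ directly. Expanding, using $\pi_k\pi_{k'}=\delta_{k,k'}\pi_k$ to collapse the quadruple sum, taking the cyclic trace together with $\pi_\ell \pi_{\ell'} = \delta_{\ell,\ell'}\pi_\ell$, and noting that $Tr(\pi_\ell T^*\pi_k T) = |\langle e_k, Te_\ell\rangle|^2$, one arrives at the diagonal identity
\[
Tr(S_N^*S_N) = \sum_{k,\ell\le N} |\alpha_{k,\ell}|^2\,\varphi(\lambda_\ell(L))^{-s}\,|\langle e_k, Te_\ell\rangle|^2\,.
\]

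Next I would bound this sum by replacing $|\alpha_{k,\ell}|$ with $\sup_{k,\ell}|\alpha_{k,\ell}|$, applying Parseval's identity $\sum_k |\langle e_k,Te_\ell\rangle|^2 = \|Te_\ell\|^2 \le \|T\|^2$, and finally recognising $\sum_\ell \varphi(\lambda_\ell(L))^{-s} = Tr(\varphi(L)^{-s})$. This yields
\[
\|S_N\|_2 \le \Bigl(\sup_{k,\ell}|\alpha_{k,\ell}|\Bigr)\cdot \|T\|\cdot Tr(\varphi(L)^{-s})^{1/2},
\]
a bound independent of $N$.

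The final step is to upgrade this uniform bound to convergence of the full series: since the truncations $S_N$ form a Cauchy net in $L^2(h)$ (apply the same computation to the differences $S_{N'}-S_N$, whose diagonal sums are tails of a convergent series), the limit defines a Hilbert--Schmidt operator to which the estimate passes, giving (\ref{HS}). The only genuine subtlety is this convergence bookkeeping; the algebraic identity itself is completely forced by the spectral structure of $L$, so no further obstacle is expected.
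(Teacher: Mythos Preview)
Your direct computation works, but one point needs correcting: in the paper's convention the $\pi_k$ are the projections onto the full eigenspaces of the \emph{distinct} eigenvalues $\widetilde\lambda_k(L)$ (this is how they are used in the surrounding propositions, where the $\widetilde\lambda_k$ appear explicitly), so they need not have rank one. Your formula $Tr(\pi_\ell T^*\pi_k T)=|\langle e_k,Te_\ell\rangle|^2$ should therefore read $\|\pi_k T\pi_\ell\|_2^2$, and the Parseval step becomes $\sum_k\|\pi_kT\pi_\ell\|_2^2=\|T\pi_\ell\|_2^2\le\|T\|^2\,Tr(\pi_\ell)=\|T\|^2\,m_\ell$, after which $\sum_\ell m_\ell\,\varphi(\widetilde\lambda_\ell)^{-s}=Tr(\varphi(L)^{-s})$ closes the argument exactly as you wrote. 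So the strategy is sound once this is adjusted.

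The paper's proof takes a different, more conceptual route: it avoids matrix-element computations entirely by working on the Hilbert space $L^2(h)$ of Hilbert--Schmidt operators. The maps $p_{k,\ell}:\Phi\mapsto\pi_k\Phi\pi_\ell$ are mutually orthogonal projections on $L^2(h)$ summing to the identity, so $\sum_{k,\ell}\alpha_{k,\ell}\,p_{k,\ell}$ is a bounded diagonal operator on $L^2(h)$ of norm $\sup_{k,\ell}|\alpha_{k,\ell}|$. Since $\pi_\ell$ commutes with $\varphi(L)^{-s/2}$, the full sum is just this operator applied to the single Hilbert--Schmidt element $T\varphi(L)^{-s/2}$, whose $L^2$-norm is at most $\|T\|\,Tr(\varphi(L)^{-s})^{1/2}$; the estimate drops out in one line with no truncation or Cauchy argument needed. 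The bimodule viewpoint makes the rank of the $\pi_k$ irrelevant from the outset, while your approach has the virtue of being completely explicit.
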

\begin{proof} As the right and left actions of $\B(h)$ on $L^2(h)$ commute, for each $k,\ell\geq 1$ we define an orthogonal projection $p_{k,\ell}$ in $\B(L^2(h))$ by
\[
p_{k,\ell}\Phi = \pi_k\Phi\pi_\ell\,,\; \Phi \in L^2(h)\,.
\]
We have obviously $\sum_{k,\ell} p_{k,\ell}=I$, so that the operator norm of $\sum_{k,\ell} \alpha_{k,\ell} p_{k,\ell}$ acting on $L^2(h)$ is $\sup_{k,\ell} |\alpha_{k,\ell}|$. We get the result writing\\
$\sum_{k,\ell} \alpha_{k,l}\, \pi_k T \pi_\ell \varphi(L)^{-s/2}=\sum_{k,\ell} \alpha_{k,l}\, \pi_k T  \varphi(L)^{-s/2} \pi_\ell = \big(\sum_{k,\ell} \alpha_{k,\ell}\, p_{k,\ell}\big) T \varphi(L)^{-s/2}$.
\end{proof}

\begin{prop}\label{mezzaconvergenza}
For any $s>1$ and $N\geq 1$, set
\[
Y_N(s)=\sum_{k>\ell\geq N} \frac{\varphi(\tl_k(L))^s-\varphi(\tl_\ell(L))^s}{\varphi(\tl_k(L))^s\,(\tl_k(L)-\tl_\ell(L))}\pi_k\, [L,a]\, \pi_\ell\, \varphi(L)^{-s/2}\,.
\]
i) One has $Y_N(s)\in L^2(h)$ with $||Y_N(s)||_2\leq  s\big( \sup_{x\geq \tl_N} \varphi'(x)/\varphi(x)\big) \; ||\,[L,a]\,|| \;
Tr(\varphi^{-s}(L))^{1/2}$\,.

ii) $\lim_{s\downarrow 1} (s-1)^{1/2}\,||Y_1(s)||_2=0$\,.
\end{prop}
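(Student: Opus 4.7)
The plan is to prove (i) as a direct application of Lemma~\ref{stime}, and then to deduce (ii) by splitting $Y_1(s)$ into a tail $Y_N(s)$, to which (i) applies with an arbitrarily small prefactor once $N$ is large, plus a head indexed by only finitely many values of $\ell$, which I can bound by hand using the spectral gaps and finite multiplicities of $L$. For (i), I would apply Lemma~\ref{stime} with $T=[L,a]$ and coefficients $\alpha_{k,\ell}:=(\varphi(\tl_k)^s-\varphi(\tl_\ell)^s)/(\varphi(\tl_k)^s(\tl_k-\tl_\ell))$ for $k>\ell\geq N$ and $\alpha_{k,\ell}:=0$ otherwise. Lemma~\ref{quotients} gives $\sup_{k,\ell}|\alpha_{k,\ell}|\leq s\cdot \text{ess-}\sup_{x\geq \tl_N}\varphi'(x)/\varphi(x)$, and inserting this into the estimate (\ref{HS}) yields the announced Hilbert--Schmidt bound.

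For (ii), I would fix $N\geq 1$ and split
\[
Y_1(s)=Y_N(s)+Z_N(s),\qquad Z_N(s):=\sum_{\ell=1}^{N-1}\sum_{k>\ell}\alpha_{k,\ell}\,\pi_k[L,a]\pi_\ell\,\varphi(L)^{-s/2}.
\]
Setting $\varepsilon_N:=\text{ess-}\sup_{x\geq \tl_N}\varphi'(x)/\varphi(x)$, part (i) together with Lemma~\ref{res0} gives
\[
(s-1)^{1/2}\|Y_N(s)\|_2\leq s\,\varepsilon_N\,\|[L,a]\|\,\bigl[(s-1)\,Tr(\varphi(L)^{-s})\bigr]^{1/2},
\]
and since the bracketed factor tends to $1$ by Lemma~\ref{res0}, one obtains $\limsup_{s\downarrow 1}(s-1)^{1/2}\|Y_N(s)\|_2\leq\varepsilon_N\|[L,a]\|$.

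For the head, the orthogonality of the projectors $p_{k,\ell}$ in $L^2(h)$ used in Lemma~\ref{stime}, together with the uniform bound $|\alpha_{k,\ell}|\leq(\tl_k-\tl_\ell)^{-1}\leq(\tl_{\ell+1}-\tl_\ell)^{-1}$ (which uses $\varphi(\tl_k)^s-\varphi(\tl_\ell)^s\leq\varphi(\tl_k)^s$) and the finite-rank identity $\sum_k\|\pi_k[L,a]\pi_\ell\|_2^2=\|[L,a]\pi_\ell\|_2^2\leq\|[L,a]\|^2 m_\ell$, would yield
\[
\|Z_N(s)\|_2^2\leq \|[L,a]\|^2\sum_{\ell=1}^{N-1}\frac{m_\ell\,\varphi(\tl_\ell)^{-s}}{(\tl_{\ell+1}-\tl_\ell)^2},
\]
a finite sum that stays bounded as $s\downarrow 1$, whence $(s-1)^{1/2}\|Z_N(s)\|_2\to 0$. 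Combining the two estimates gives $\limsup_{s\downarrow 1}(s-1)^{1/2}\|Y_1(s)\|_2\leq\varepsilon_N\|[L,a]\|$ for every $N$, and letting $N\to\infty$ concludes (ii) because $\varepsilon_N\to 0$ by the hypothesis (\ref{subexp}) on $\varphi$.

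The main obstacle I anticipate is precisely this splitting: the estimate from (i) is useless at $N=1$ since nothing forces $\varepsilon_1$ to be small, yet it is $(s-1)^{1/2}\|Y_1(s)\|_2$ that must be controlled. Isolating a finite-rank head circumvents the problem because, for each fixed $\ell$, the $k$-sum of $|\alpha_{k,\ell}|^2$ is controlled by the strictly positive spectral gap $\tl_{\ell+1}-\tl_\ell$, and the remaining $\ell$-sum is finite thanks to $\pi_\ell$ having finite rank $m_\ell$. Everything else reduces to routine bookkeeping with the Hilbert--Schmidt inner product.
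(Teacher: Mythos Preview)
Your proposal is correct and follows essentially the same route as the paper: part (i) is obtained exactly as you indicate, by combining Lemma~\ref{quotients} (to bound the coefficients) with Lemma~\ref{stime}; part (ii) is obtained by the same head/tail splitting $Y_1(s)=Y_N(s)+(Y_1(s)-Y_N(s))$, using (i) and Lemma~\ref{res0} for the tail, and showing the head has a Hilbert--Schmidt norm bounded uniformly in $s$ near $1$. The only cosmetic difference is in the bookkeeping for the head: the paper re-invokes the mechanism of Lemma~\ref{stime} to get $\|Y_1(s)-Y_N(s)\|_2\leq C\,\bigl\|\bigl(\sum_{\ell=1}^{N}\pi_\ell\bigr)\varphi(L)^{-s/2}\bigr\|_2$ directly, whereas you compute the same quantity by hand via the orthogonality of the $p_{k,\ell}$, the gap bound $|\alpha_{k,\ell}|\leq (\tl_{\ell+1}-\tl_\ell)^{-1}$, and $\sum_k\|\pi_k[L,a]\pi_\ell\|_2^2\leq\|[L,a]\|^2 m_\ell$; both arguments yield a finite $\ell$-sum that stays bounded as $s\downarrow 1$.
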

\begin{proof}
To short notation set $\la_k:=\la_k(L)$, etc.. For i), apply Lemmas \ref{quotients} and \ref{stime}. ii) Fix $\e>0$ and $N\ge 1$ such that
\[
{\rm ess-}\sup_{x\geq \tl_N} \varphi'(x)/\varphi(x)\leq \e.
\]
On one hand, $\displaystyle Y_1(s)-Y_N(s)=\sum_{1\leq \ell\leq N\,,\,k\geq \ell} \frac{\varphi(\tl_k)^s-\varphi(\tl_\ell)^s}{\varphi(\tl_k)^s\,(\tl_k-\tl_\ell)}\pi_k\, [L,a]\, \pi_\ell\, \varphi(L)^{-s/2}$ has a Hilbert-Schmidt norm less than
$C\,\left\| \big(\sum_{\ell=1}^N \pi_\ell\,\big)\, \varphi(L)^{-s/2}\right\|_2$ for some constant $C$ depending only on $\varphi$ and $||\,[L,a]\,||$, by Lemma \ref{stime}. Hence $||Y_1(s)-Y_N(s)||_2$  is bounded independently of $s$. For $s$ close enough to $1$, we  then have $(s-1)^{1/2}\,||Y_1(s)-Y_N(s)||_2\leq \e$.

\smallskip On the other hand, applying i), we get $||Y_N(s)||_2\leq \e\,s\,||\,[L,a]\,|| \, Tr(\varphi^{-s})^{1/2}$\,. Applying Lemma \ref{res0}, we get that, for $s$ close enough to $1$, $(s-1)^{1/2}Y_N(s)$ has a Hilbert-Schmidt norm less than $\e\cdot s\cdot \|[L,a]\| \cdot(1+\e)$\,.

\smallskip Summing up, we get that,  for $s$ close enough to 1, $(s-1)^{1/2}Y_1(s)$ has Hilbert-Schmidt norm less that $\e\cdot\big(s\,||\,[L,a]\,|| \,(1+\e)+1\big)$\,.
\end{proof}

\begin{prop}\label{mezzaconvergenza2}
Setting $\displaystyle Z_1(s)= \sum_{1\leq k \leq \ell} \varphi(L)^{-s/2}  \frac{\varphi(\tl_k(L))^s-\varphi(\tl_\ell(L))^s}{\varphi(\tl_\ell(L))^s\,(\tl_k(L)-\tl_\ell(L))}\pi_k\, [L,a]\, \pi_\ell\,$,\\
we have :

i) $Z_1(s)\in L^2(h)$ whenever $s>1$

ii) $\lim_{s\downarrow 1} (s-1)^{1/2} ||Z_1(s)||_2=0$.
\end{prop}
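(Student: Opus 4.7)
The argument will mirror the proof of Proposition \ref{mezzaconvergenza} almost line for line, the only essential change being that $\varphi(L)^{-s/2}$ now sits on the left of each summand and the coefficient is normalized by $\varphi(\tl_\ell)^s$ (the larger of the two values) in place of $\varphi(\tl_k)^s$. Two auxiliary ingredients need to be reformulated: a ``dual'' version of Lemma \ref{quotients}, bounding
\[
\frac{\varphi(\tl_\ell(L))^s - \varphi(\tl_k(L))^s}{\varphi(\tl_\ell(L))^s(\tl_\ell(L) - \tl_k(L))} \leq \text{ess-}\sup\nolimits_{x\geq \tl_N(L)} s\,\frac{\varphi'(x)}{\varphi(x)}\qquad (N \leq k < \ell),
\]
and a ``dual'' version of Lemma \ref{stime}, with $\varphi(L)^{-s/2}$ acting on the left. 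The first follows from the same mean-value computation, using $\varphi(x)^{s-1}/\varphi(\tl_\ell)^s\leq 1/\varphi(x)$ on $[\tl_k,\tl_\ell]$, which is just $\varphi(x)\leq\varphi(\tl_\ell)$. The second goes through by the very same $p_{k,\ell}$-projection computation as in Lemma \ref{stime}, since $\varphi(L)^{-s/2}$ commutes with every spectral projection $\pi_k$ and hence with each $p_{k,\ell}$, so that $\sum_{k,\ell}\alpha_{k,\ell}\varphi(L)^{-s/2}\pi_k T\pi_\ell = (\sum \alpha_{k,\ell} p_{k,\ell})(\varphi(L)^{-s/2}T)$.

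For part (i), the assumptions $\varphi'\in L^\infty_{\rm loc}$ and $\text{ess-}\limsup\varphi'/\varphi=0$ together imply that $\varphi'/\varphi$ is essentially bounded on $[\tl_1(L),+\infty)$; the dual Lemma \ref{quotients} with $N=1$ then bounds the coefficients of $Z_1(s)$ uniformly, and the dual Lemma \ref{stime} yields $Z_1(s)\in L^2(h)$ together with $\|Z_1(s)\|_2 \leq \text{const}\cdot\|[L,a]\|\cdot Tr(\varphi(L)^{-s})^{1/2}$, finite by Lemma \ref{res0}.

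For part (ii) I would imitate the splitting of the previous proposition: fix $\varepsilon>0$, choose $N$ so that $\text{ess-sup}_{x\geq\tl_N(L)}\varphi'/\varphi\leq\varepsilon$, and define $Z_N(s)$ by restricting the sum to $N\leq k\leq\ell$. The dual Lemma \ref{quotients} gives coefficients bounded by $s\varepsilon$ on this range, so the dual Lemma \ref{stime} combined with Lemma \ref{res0} yields
\[
(s-1)^{1/2}\|Z_N(s)\|_2 \leq s\varepsilon\|[L,a]\|\cdot(s-1)^{1/2}Tr(\varphi(L)^{-s})^{1/2}\longrightarrow \varepsilon\|[L,a]\|, \qquad s\downarrow 1.
\]
The complementary piece $Z_1(s)-Z_N(s)$ is supported on indices $1\leq k<N$, so the image of the underlying operator $S(s):=\sum_{k<N,\ell\geq k}\alpha_{k,\ell}\pi_k[L,a]\pi_\ell$ lies in the range of the finite-rank projection $P_N^<:=\sum_{k<N}\pi_k$. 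Applying the $p_{k,\ell}$-trick to the bounded operator $[L,a]$ gives $\|S(s)\|_{op}\leq\sup|\alpha_{k,\ell}|\cdot\|[L,a]\|$, uniformly bounded in $s$ near $1$; using $S(s)=P_N^<S(s)$ and the commutation of $\varphi(L)^{-s/2}$ with $P_N^<$ yields
\[
\|Z_1(s)-Z_N(s)\|_2=\|\varphi(L)^{-s/2}P_N^<S(s)\|_2\leq\|P_N^<\varphi(L)^{-s/2}\|_2\cdot\|S(s)\|_{op},
\]
and $\|P_N^<\varphi(L)^{-s/2}\|_2^2=\sum_{k<N}m_k\varphi(\tl_k(L))^{-s}$ is a finite sum, bounded as $s\downarrow 1$. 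Hence $(s-1)^{1/2}\|Z_1(s)-Z_N(s)\|_2\to 0$, and letting $\varepsilon\downarrow 0$ concludes.

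The only place where care is required is ensuring the finite-rank factorization $Z_1(s)-Z_N(s)=\varphi(L)^{-s/2}P_N^<S(s)$ is set up correctly so that the $L^2$-seminorm is controlled by an HS factor ($P_N^<\varphi(L)^{-s/2}$, bounded in $s$) times an operator-norm factor ($S(s)$, also bounded in $s$); the rest of the proof is a faithful mirror of Proposition \ref{mezzaconvergenza}.
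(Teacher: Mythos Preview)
Your argument is correct in substance, but it is far longer than necessary and takes a genuinely different route from the paper. The paper's proof is a single observation: taking adjoints sends $Z_1(s)$ (up to a sign) to the operator $Y_1(s)$ of Proposition~\ref{mezzaconvergenza} with $a$ replaced by $a^*$. Indeed, $(\pi_k[L,a]\pi_\ell)^* = -\pi_\ell[L,a^*]\pi_k$, the scalar coefficients are real for real $s$, and after relabelling $k\leftrightarrow\ell$ one recovers exactly the $Y_1$-sum for $a^*$. Since $a^*\in\A_L$ whenever $a\in\A_L$ and the Hilbert--Schmidt norm is invariant under adjoints, both i) and ii) follow immediately from Proposition~\ref{mezzaconvergenza} with no further work.

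Your approach instead rebuilds the entire machinery --- a dual Lemma~\ref{quotients}, a dual Lemma~\ref{stime}, and the full $\varepsilon$-$N$ splitting --- which is perfectly valid but redundant given the adjoint symmetry. One small imprecision: you bound $\|S(s)\|_{op}$ by invoking ``the $p_{k,\ell}$-trick'', but that trick controls Hilbert--Schmidt norms, not operator norms. The clean fix is to stay entirely in $L^2(h)$: write $Z_1(s)-Z_N(s) = \big(\sum_{k,\ell}\alpha'_{k,\ell}\,p_{k,\ell}\big)\big(P_N^<\varphi(L)^{-s/2}[L,a]\big)$ (using $\pi_k = \pi_k P_N^<$ for $k<N$ and $\alpha'_{k,\ell}=0$ for $k\geq N$), and then apply Lemma~\ref{stime} directly to the finite-rank Hilbert--Schmidt operator $P_N^<\varphi(L)^{-s/2}[L,a]$, whose HS norm is bounded uniformly in $s$ near $1$. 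With this adjustment your proof goes through, but the paper's adjoint trick is the intended --- and much shorter --- argument.
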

{\it Proof.} Up to a sign, $Z_1(s)^*$ is given by the same formula as $Y_1(s)$, with $\bar s$ substituted to $s$ and $a^*$ substituted to $a$. Apply Proposition \ref{mezzaconvergenza} i) and ii).
~\hfill $\square$

\begin{lem}\label{commutator} (Chain rule) For any $f\in C(\R)$ and $a\in \A_L$  we have, for $k,\ell\geq 1$, $k\not=\ell$\,:
\par \hskip1cm i) $\pi_k\,[a,f(L)]\,\pi_k=0$ and $\pi_k\,[a,f(L)]\,\pi_\ell = (f(\tl_k)-f(\tl_\ell))\,\pi_k\,a\,\pi_\ell$\,.
\par \hskip1cm ii) $\pi_k\,[L,a]\,\pi_\ell = (\tl_k-\tl_\ell)\,\pi_ka\pi_\ell\,.$
\par \hskip1cm iii) $\ds \pi_k\,[a,f(L)]\,\pi_\ell =\frac{ (f(\tl_k(L))-f(\tl_\ell(L)))}{\tl_k(L)-\tl_\ell(L)}\,\pi_k\,[L,a]\,\pi_\ell$\,.
\par In other words, by easily understood abuse of notation, we can write
\begin{equation}\label{comm}
[a,f(L)]= \sum_{k\not=\ell} \frac{ (f(\tl_k(L))-f(\tl_\ell(L)))}{\tl_k(L)-\tl_\ell(L)}\,\pi_k\,[L,a]\,\pi_\ell\,.
\end{equation}
\end{lem}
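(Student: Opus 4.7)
The lemma is essentially a bookkeeping exercise with the spectral projections $\pi_k$ of $L$ at the distinct eigenvalues $\tl_k(L)$; the Lipschitz hypothesis on $a$ enters only to guarantee that $[L,a]$ is a bounded operator, so that (iii) and the formal series (\ref{comm}) represent honest operators. The plan is to establish (ii) first, then deduce (i) by applying the same computation with $f(L)$ in place of $L$, combine the two to obtain (iii), and finally interpret the summation formula via the resolution of the identity.

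For (ii), since $\pi_k$ is the spectral projection of $L$ at $\tl_k(L)$, one has $L\pi_k=\tl_k(L)\pi_k$ and symmetrically $\pi_\ell L=\tl_\ell(L)\pi_\ell$. Expanding
\[
\pi_k[L,a]\pi_\ell = \pi_k L a\pi_\ell - \pi_k a L\pi_\ell
\]
and pulling these scalar eigenvalues past the projections yields $(\tl_k(L)-\tl_\ell(L))\pi_k a\pi_\ell$ at once; only boundedness of $a$ is needed, not the Lipschitz condition. The argument for (i) is identical, with $f(L)\pi_k=f(\tl_k(L))\pi_k$ and $\pi_\ell f(L)=f(\tl_\ell(L))\pi_\ell$ (continuous functional calculus) replacing the case $f(t)=t$; specialising to $\ell=k$ collapses the scalar difference $f(\tl_k)-f(\tl_k)$ to zero and kills the diagonal block.

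For (iii) one combines (i) and (ii): whenever $k\neq\ell$ the denominator $\tl_k(L)-\tl_\ell(L)$ is nonzero, and (ii) allows one to substitute $\pi_k a\pi_\ell = (\tl_k(L)-\tl_\ell(L))^{-1}\pi_k[L,a]\pi_\ell$ into the right-hand side of (i). The summation identity (\ref{comm}) then follows from the resolution of the identity $\sum_k \pi_k = I$ on $(\ker L)^\perp$ — which is all that matters, since $f(L)$ and $[a,f(L)]$ are by convention supported there — by writing $[a,f(L)]=\sum_{k,\ell}\pi_k[a,f(L)]\pi_\ell$ in the weak operator topology: diagonal terms vanish by (i) and each off-diagonal term is given by (iii). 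The only delicate point, which the author explicitly flags with the phrase ``abuse of notation'', is the mode of convergence of this double series; the natural reading is weak operator convergence, which is sufficient for the applications in Propositions \ref{mezzaconvergenza} and \ref{mezzaconvergenza2}, where the individual blocks $\pi_k[L,a]\pi_\ell$ are reassembled into a Hilbert--Schmidt operator via Lemma \ref{stime}. I do not expect any genuine obstacle.
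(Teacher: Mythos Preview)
Your proof is correct and follows the same approach as the paper, which simply declares (i) and (ii) ``straightforward'' and (iii) ``an obvious combination of i) and ii).'' Your version merely spells out the one-line computations and adds a careful remark on the convergence of the series, which the paper does not bother to discuss.
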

{\it Proof.} i) and ii) are straightforward. iii) is an obvious combination of i) and ii).
\hfill $\square$

\begin{prop}\label{Trconv} For any $a\in \A_L$ ( i.e. $[a,L]$ is bounded) one has
\begin{equation}
\lim_{s\downarrow 1} Tr\left(\,\big| \, [a,\varphi(L)^{-s}\,\big|\,\right) =0\,.
\end{equation}
\end{prop}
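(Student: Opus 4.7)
The plan is to establish the clean operator factorisation
\[
[a,\varphi(L)^{-s}] \;=\; -\,Y_1(s)\,\varphi(L)^{-s/2}\;-\;\varphi(L)^{-s/2}\,Z_1(s),
\]
with $Y_1(s)$ and $Z_1(s)$ the operators introduced in Propositions~\ref{mezzaconvergenza} and~\ref{mezzaconvergenza2}, and then close by H\"older in the Schatten ideals together with Lemma~\ref{res0} and the cited asymptotics on $\|Y_1(s)\|_2$ and $\|Z_1(s)\|_2$.

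First I would apply the chain rule, Lemma~\ref{commutator}(iii), with $f(x)=\varphi(x)^{-s}$, writing
\[
[a,\varphi(L)^{-s}]=\sum_{k\ne\ell}\frac{\varphi(\tl_k)^{-s}-\varphi(\tl_\ell)^{-s}}{\tl_k-\tl_\ell}\,\pi_k[L,a]\pi_\ell,
\]
and algebraically factor the numerator as $\varphi(\tl_k)^{-s}-\varphi(\tl_\ell)^{-s}=-(\varphi(\tl_k)^s-\varphi(\tl_\ell)^s)/\big(\varphi(\tl_k)^s\varphi(\tl_\ell)^s\big)$. Splitting the double sum into the parts $k>\ell$ and $k<\ell$, and using the spectral identities $\pi_\ell\varphi(L)^{-s/2}=\varphi(\tl_\ell)^{-s/2}\pi_\ell$ and $\varphi(L)^{-s/2}\pi_k=\varphi(\tl_k)^{-s/2}\pi_k$, one of the two occurrences of the exponent $-s$ can be pulled out as a factor $\varphi(L)^{-s/2}$ to the right of the upper-triangular piece and to the left of the lower-triangular piece. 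Comparing term by term with the definitions, the upper piece is recognised as $-Y_1(s)\varphi(L)^{-s/2}$ and the lower piece as $-\varphi(L)^{-s/2}Z_1(s)$, which is the factorisation displayed above.

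With this identity in hand, the triangle inequality and the Schatten H\"older bound $\|AB\|_1\le\|A\|_2\|B\|_2$ give
\[
Tr\big(\big|\,[a,\varphi(L)^{-s}]\,\big|\big)\;\le\;\|Y_1(s)\|_2\,\|\varphi(L)^{-s/2}\|_2+\|\varphi(L)^{-s/2}\|_2\,\|Z_1(s)\|_2\,.
\]
Lemma~\ref{res0} supplies $\|\varphi(L)^{-s/2}\|_2^2=Tr(\varphi(L)^{-s})\sim (s-1)^{-1}$ while Propositions~\ref{mezzaconvergenza}(ii) and~\ref{mezzaconvergenza2}(ii) give $(s-1)^{1/2}\|Y_1(s)\|_2\to 0$ and $(s-1)^{1/2}\|Z_1(s)\|_2\to 0$ as $s\downarrow 1$. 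Multiplying these asymptotics, the right-hand side above is $o\big((s-1)^{-1}\big)$, which is exactly the conclusion in the form used in Theorem~\ref{residue}(1.b).

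The main obstacle is the bookkeeping that identifies the doubly-indexed sum with the preparatory operators $Y_1(s)$ and $Z_1(s)$: one must sort out the asymmetry $\varphi(\tl_k)^s$ versus $\varphi(\tl_\ell)^s$ in the two definitions and check that the pulled-out $\varphi(L)^{-s/2}$ always lands on the correct side of $\pi_k[L,a]\pi_\ell$. Once the factorisation is pinned down, the remaining analysis is a routine application of H\"older plus the two preparatory Propositions; the scaling cannot be improved by this route, since the elementary model $L=\mathrm{diag}(1,2,3,\ldots)$, $\varphi(x)=x$, $a$ the unilateral shift yields $Tr(|[a,L^{-s}]|)=1$ for every $s>0$, showing that the $(s-1)$-weight produced by the H\"older argument is indeed sharp.
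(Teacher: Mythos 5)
Your proof is correct and follows essentially the same route as the paper: the same chain-rule decomposition of $[a,\varphi(L)^{-s}]$ into the upper- and lower-triangular pieces $Y_1(s)\,\varphi(L)^{-s/2}$ and $\varphi(L)^{-s/2}\,Z_1(s)$, estimated by the Schatten--H\"older inequality (the paper phrases this step via the polar decomposition of $X^{\pm}(s)$, which amounts to the same bound). Note that, exactly like the paper's own argument, what you prove is the $(s-1)$-weighted statement of Theorem~\ref{residue}(1.b) --- the factor $(s-1)$ is evidently missing from the display in Proposition~\ref{Trconv}, and your shift example confirms that it cannot be dropped.
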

\begin{proof}
To short notation set $\la_k:=\la_k(L)$, etc.. Lemma \ref{commutator} allows to write
\begin{equation} \begin{split}
[a,\varphi(L)^{-s}]&=\sum_{k\not=\ell} \frac{ (\varphi(\tl_k)^{-s}-\varphi(\tl_\ell)^{-s})}{\tl_k-\tl_\ell}\,\pi_k\,[L,a]\,\pi_\ell \\
&=\sum_{k\not=\ell} \frac{ (\varphi(\tl_k)^{s}-\varphi(\tl_\ell)^{s})}{\varphi(\tl_k)^{-s}(\tl_k-\tl_\ell)\varphi(\tl_\ell)^{-s}}\,\pi_k\,[L,a]\,\pi_\ell \\
&= X^+(s)+X^-(s)
\end{split}\end{equation}
with
\begin{equation}\begin{split}
X^+(s)&=\sum_{k>\ell\geq 1} \frac{ (\varphi(\tl_k)^{s}-\varphi(\tl_\ell)^{s})}{\varphi(\tl_k)^{-s}(\tl_k-\tl_\ell)\varphi(\tl_\ell)^{-s}}\,\pi_k\,[L,a]\,\pi_\ell \\
&=\sum_{k>\ell\geq 1} \frac{ (\varphi(\tl_k)^{s}-\varphi(\tl_\ell)^{s})}{\varphi(\tl_k)^{-s}(\tl_k-\tl_\ell)}\,\pi_k\,[L,a]\,\pi_\ell \,\varphi(L)^{-s} \\
&=Y_1(s)\varphi(L)^{-s/2}
\end{split}\end{equation}
while
\begin{equation}\begin{split}
X^-(s)&=\sum_{1\leq k < \ell} \frac{ (\varphi(\tl_k)^{s}-\varphi(\tl_\ell)^{s})}{\varphi(\tl_k)^{-s}(\tl_k-\tl_\ell)\varphi(\tl_\ell)^{-s}}\,\pi_k\,[L,a]\,\pi_\ell \\
&=\sum_{1\leq k<\ell}\varphi(L)^{-s} \frac{ (\varphi(\tl_k)^{s}-\varphi(\tl_\ell)^{s})}{\varphi(\tl_\ell)^{-s}(\tl_k-\tl_\ell)}\,\pi_k\,[L,a]\,\pi_\ell \, \\
&=\varphi(L)^{-s/2} Z_1(s)\,.
\end{split}\end{equation}
Let $X^+(s)=u^+(s)\,|\,X^+(s)\,|$ be the polar decomposition of $X_+(s)$. Applying Lemma \ref{res0} and Proposition
\ref{mezzaconvergenza} (2), we get
\begin{equation} \begin{split}
Tr(\,|X^+(s)|\,)&=Tr(u^+(s)^*X^+(s)) \\ &=Tr(u^+(s)^*Y_1(s)\varphi(L)^{-s/2}) \\
&=Tr(\varphi(L)^{-s/2}u^+(s)^* Y_1(s)> \\
&\leq ||u^+(s)\varphi(L)^{-\bar s/2}||_2\,||Y_1(s)||_2 \\
& = O((Re(s)-1)^{-1/2})o((Re(s)-1)^{-1/2})=o(Re(s)-1)^{-1}\,,
\end{split}\end{equation}
which proves $(s-1)Tr(\,|X^+(s)|\,)\to 0$ as $s\downarrow 1$.

\smallskip A similar argument, {\it mutatis mutandis}, provides $(s-1)Tr(\,|X^-(s)|\,)\to 0$ as $s\downarrow 1$.
\end{proof}

\begin{lem}\label{compact} If $T$ is a compact operator, then
$$\lim_{s\downarrow 1} (s-1)Tr(T\,\varphi(L)^{-s})=0\,.$$
\end{lem}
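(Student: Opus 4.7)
The plan is to reduce to finite-rank $T$ by density and use the boundedness of $(s-1)\,Tr(\varphi(L)^{-s})$ proved in Lemma \ref{res0}. Since $\varphi(L)^{-s}$ is trace class for $s>1$ and, by Lemma \ref{res0}, $(s-1)\,Tr(\varphi(L)^{-s}) \to 1$ as $s\downarrow 1$, the family $\{(s-1)\,Tr(\varphi(L)^{-s})\}_{s>1}$ is bounded in a neighborhood of $s=1$, and moreover
\[
\bigl|\,(s-1)\,Tr(B\,\varphi(L)^{-s})\,\bigr| \le \|B\|\cdot (s-1)\,Tr(\varphi(L)^{-s})
\]
for every $B\in\B(h)$, by the trace-norm/operator-norm duality.

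Fix $\e>0$ and choose a finite rank operator $F$ with $\|T-F\|\le \e$. Splitting
\[
(s-1)\,Tr(T\,\varphi(L)^{-s}) \;=\; (s-1)\,Tr(F\,\varphi(L)^{-s}) \;+\; (s-1)\,Tr((T-F)\,\varphi(L)^{-s}),
\]
the remainder term is controlled uniformly by
\[
\bigl|(s-1)\,Tr((T-F)\,\varphi(L)^{-s})\bigr| \le \e\cdot (s-1)\,Tr(\varphi(L)^{-s}),
\]
which has $\limsup_{s\downarrow 1}\le \e$.

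For the main term, I would argue that $Tr(F\,\varphi(L)^{-s})$ is bounded as $s\downarrow 1$. Writing $F=\sum_{i=1}^N |x_i\rangle\langle y_i|$, we have
\[
Tr(F\,\varphi(L)^{-s}) \;=\; \sum_{i=1}^N \langle y_i, \varphi(L)^{-s} x_i\rangle.
\]
Since $\varphi(L)^{-1}$ is compact (hence bounded) and $\varphi(L)^{-s}=\bigl(\varphi(L)^{-1}\bigr)^s$, the functional calculus and dominated convergence with respect to the spectral measure of $L$ give $\langle y_i, \varphi(L)^{-s} x_i\rangle \to \langle y_i, \varphi(L)^{-1} x_i\rangle$ as $s\downarrow 1$, so $Tr(F\,\varphi(L)^{-s})$ is bounded, and therefore $(s-1)\,Tr(F\,\varphi(L)^{-s}) \to 0$. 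Combining, $\limsup_{s\downarrow 1}\bigl|(s-1)\,Tr(T\,\varphi(L)^{-s})\bigr|\le \e$, and letting $\e\to 0$ yields the claim.

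No step here looks genuinely hard: the only point requiring care is the boundedness of $Tr(F\,\varphi(L)^{-s})$ as $s\downarrow 1$, which is immediate from the compactness (equivalently, boundedness) of $\varphi(L)^{-1}$ established via Proposition \ref{volumeform} and Proposition \ref{asymptotics}. The whole argument is a standard $\e/\delta$ density argument, built on the uniform bound $(s-1)\,Tr(\varphi(L)^{-s}) = O(1)$ from Lemma \ref{res0}.
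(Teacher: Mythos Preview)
Your proof is correct and follows essentially the same approach as the paper: approximate $T$ by a finite-rank operator, control the remainder via $\|T-F\|\cdot(s-1)\,Tr(\varphi(L)^{-s})$ using Lemma~\ref{res0}, and observe that $Tr(F\,\varphi(L)^{-s})$ converges to $Tr(F\,\varphi(L)^{-1})$ so that the factor $(s-1)$ kills it. The paper's version is slightly terser on the finite-rank step (it simply asserts the limit exists), while you spell out the spectral-measure argument, but the structure is identical.
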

\begin{proof}
Fix $\e>0$ and $T_0$ a finite rank operator such that $\|T-T_0||\leq \e$. On one hand, $\lim_{s\downarrow 1} Tr\big(T_0\varphi(L)^{-s}\big)=Tr\big(T_0\varphi(L)^{-1}\big)$ exists, so that $\lim_{s\downarrow 1}(s-1)Tr\big(T_0\varphi(L)^{-s}\big)=0$,
which means $\left| Tr\big(T_0\varphi(L)^{-s}\big)\right|\leq \e$ for $s$ close to $1$. On the other hand, one has for $s>1$,
\[
(s-1)\left| Tr\big((T-T_0)\varphi(L)^{-s}\big)\right|\leq \e (s-1)Tr(\varphi(L)^{-s})
\]
with, by Lemma \ref{res0}, $(s-1)Tr(\varphi(L)^{-s})\leq 1+\e$ for $s>1$ close to $1$. Summing up, we have $(s-1)\left| Tr(T\varphi(L)^{-s})\right| \leq \e(2+\e)$ for $s>1$ close to $1$.
\end{proof}

\medskip \subsection{Proofs of the theorem and its corollaries}\-
\vskip0.2truecm
{\it Proof of Theorem \ref{residue}.} (1.a) is Lemma \ref{res0}. (1.b) is Proposition \ref{Trconv} and (1.c) is an obvious consequence of (1.b). In (2), the fact that the $\Omega_s$ are bounded as $s\to 1+$ and that a limit linear form is a state is a consequence of Lemma \ref{res0}. (2.a) comes from Lemma \ref{compact}. (2.b.) and (2.c) come from (1.b) and (1.c).
~\hfill $\square$

\medskip {\it Proof of Corollary \ref{residueF}} We have $\varphi^{-1}=N_L^{-1}(1+g)$ with $g\,: \R_+\to \R_+$ vanishing at infinity. This implies $\varphi(L)^{-1} = N_L(L)^{-1}(I+T)$ with $T$ a compact operator commuting with $L$, $N_L(L)$ and $\varphi(L)$. Apply Lemma \ref{compact} repeatedly for substituting $N_L(L)^{-1}$ to $\varphi(L)^{-1}$ in every successive item of Theorem \ref{residue}.

~\hfill $\square$

\medskip{\it Proof of Corollary \ref{sufficientF}.} Let $\varphi$ be the continuous piecewise affine function on $\R_+$ interpolating affinely between the points
$\tl_k(L)$ and $\tl_{k+1}(L)$,
i.e.
$$\varphi(x)=M_k+(x-\tl_k(L))\frac{M_{k+1}-M_k}{\tl_{k+1}(L)-\tl_k(L)}\quad \text{ whenever }\quad x\in [\tl_k(L),\tl_{k+1}(L)]\,.$$
This is the function constructed in Proposition \ref{continuity} where it is shown to be asymptotically equivalent to $N_L$,
provided that $M_{k+1}/M_k$ tends to $1$ as $k\to \infty$.

$\varphi$ is differentiable on each interval $(\tl_k,\tl_{k+1})$ with derivative $\displaystyle \varphi'(x)=\frac{M_{k+1}-M_k}{\tl_{k+1}-\tl_k}$. Moreover, for $x\in (\tl_k,\tl_{k+1})$ we have $\varphi(x)\geq M_k$ and $\ds \frac{\varphi'(x)}{\varphi(x)}\leq\left(\frac{M_{k+1}}{M_k}-1\right)\,\frac{1}{\tl_{k+1}-\tl_k}$ and by hypothesis
we have $\ds \lim_{x\to +\infty} \frac{\varphi'(x)}{\varphi(x)}=0$.~\hfill $\square$


\subsection{Densities on C$^*$-algebras extensions} We conclude with a remark concerning densities and their volume forms on C$^*$-algebras extensions $A\subset \B(h)$ in the sense of \cite{D}, \cite{HR}
\[
  \begin{tikzcd}
    0 \arrow{r} & \mathcal{K} \arrow{r} & A\arrow{r}{\sigma} & C(X) \arrow{r} & 0,
  \end{tikzcd}
 \]
where $\K$ is the elementary C$^*$-algebra represented in $h$ with finite multiplicity and $X$ is a compact metrizable space.
This framework include the Toeplitz extension and the extension generated by scalar, zero order $\Psi$DO on compact manifolds.

\begin{prop}(Volume forms on extension)
Assume the counting function $N_L$ to be asymptotically continuous. Then,  for any fixed Dixmier ultrafilter $\omega$,
\vskip0.2truecm\noindent
i) the volume form
\[
\Omega_L: \mathcal{B}(h)\to \mathbb{C}\qquad \Omega_L(T):={\rm Tr}_\omega(T\rho(L))
\]
is a state vanishing on the ideal $\mathcal{K}(h)$ of compact operators and thus it factorizes through a state on the Calkin algebra $\mathcal{Q}(h)= \mathcal{B}(h)/\mathcal{K}(h)$;
\vskip0.2truecm\noindent
ii) the restriction of $\Omega_L$ to $A$ is a trace that factorizes through a probability measure $\mu_\omega$ on $X$
\[
\Omega_L (a)=\int_X (\sigma(a))(x)\, \mu_\omega(dx)\qquad a\in A.
\]
Under the assumptions of Theorem \ref{residue} or Corollary \ref{sufficientF}, we also have
\vskip0.2truecm\noindent
iii) $\Omega_L$ is an hypertrace (or amenable trace state) vanishing on the ideal $\K$;
\vskip0.2truecm\noindent
iv) there exists a conditional expectation $E_\omega^L:\B(h)\to L^\infty (X,\mu_\omega)$ such that
\[
\Omega_L(T)=\int_X E^L_\omega (T)\cdot d\mu_\omega\qquad T\in\B(h).
\]
\end{prop}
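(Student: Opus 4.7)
My plan is to dispatch parts (i)--(iii) by combining earlier results with the commutativity of $C(X)$, and then to prove (iv) via a Radon--Nikodym construction driven by the hypertrace identity.

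For (i), the asymptotic continuity of $N_L$ together with Proposition \ref{continuity} gives that $\rho(L)$ is a density, so $Tr_\omega(\rho(L))=1$ and $\Omega_L$ is a state. The vanishing $\Omega_L|_{\K(h)}=0$ is exactly Proposition \ref{volumeform}(ii), hence $\Omega_L$ descends to the Calkin algebra. For (ii), since $\ker\sigma=\K$, $\Omega_L|_A$ factors as $\Omega_L(a)=\widetilde\Omega(\sigma(a))$ for a state $\widetilde\Omega$ on $C(X)$, which by Riesz representation is integration against a probability measure $\mu_\omega$. The commutativity of $C(X)$ yields $\sigma(ab-ba)=0$ for $a,b\in A$, hence $ab-ba\in\K$ and $\Omega_L(ab)=\Omega_L(ba)$, so the restriction is tracial. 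Part (iii) is then obtained by combining Theorem \ref{residue}(2.b)--(2.c) (or its variant Corollary \ref{sufficientF}), which gives the hypertrace property of $\Omega_L$ on the uniform closure $A$ of $\A_L$, with the vanishing on $\K$ from (i).

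For (iv), fix $T\in\B(h)$ and define a linear functional $\phi_T$ on $C(X)$ by $\phi_T(f):=\Omega_L(\widetilde f\,T)$, where $\widetilde f\in A$ is any $\sigma$-lift of $f$ chosen with $\|\widetilde f\|=\|f\|_\infty$ (possible since $\sigma$ is a $C^*$-quotient and hence isometric on equivalence classes). The critical well-definedness step uses (iii): if $\widetilde f_1,\widetilde f_2\in A$ are two such lifts and $K:=\widetilde f_1-\widetilde f_2\in\K\subset A$, then $\Omega_L(KT)=\Omega_L(TK)=0$, the first equality being the hypertrace identity and the second holding because $TK\in\K$ and $\Omega_L|_\K=0$. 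The estimate $|\phi_T(f)|\leq\|T\|\,\|f\|_\infty$ is immediate, so $\phi_T$ is a complex Radon measure on $X$. For $T\geq 0$, choosing $\widetilde f\geq 0$ when $f\geq 0$, the hypertrace rewriting $\Omega_L(\widetilde f T)=\Omega_L(\widetilde f^{1/2}T\widetilde f^{1/2})\geq 0$ shows that $\phi_T$ is positive, and the bound $0\leq T\leq\|T\|I$ yields $\phi_T\leq\|T\|\mu_\omega$.

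The Radon--Nikodym theorem now produces $E^L_\omega(T):=d\phi_T/d\mu_\omega\in L^\infty(X,\mu_\omega)$ with $\|E^L_\omega(T)\|_\infty\leq\|T\|$, extending linearly to a contractive map on all of $\B(h)$. Specializing to $f\equiv 1$ yields $\Omega_L(T)=\phi_T(1)=\int_X E^L_\omega(T)\,d\mu_\omega$. The bimodule identity $E^L_\omega(bTa)=\sigma(b)\sigma(a)E^L_\omega(T)$ for $a,b\in A$ follows from $\Omega_L(\widetilde f\,bTa)=\Omega_L((a\widetilde f b)T)$ (again via (iii)) together with the fact that $a\widetilde f b$ is a lift of $\sigma(a)f\sigma(b)$; this identifies $E^L_\omega$ as a conditional expectation onto $L^\infty(X,\mu_\omega)$. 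The main conceptual obstacle is precisely the well-definedness of $\phi_T$: without the hypertrace identity from (iii) the quantity $\Omega_L(\widetilde f T)$ depends on the chosen lift and the construction collapses, which is why (iv) is placed under the hypotheses of Theorem \ref{residue}/Corollary \ref{sufficientF} rather than merely (i)--(ii).
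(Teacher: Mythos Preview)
Your argument is correct and supplies the details that the paper omits: its own proof consists of the single word ``Straightforward.'' Your treatment of (i)--(iii) via Proposition~\ref{continuity}, the vanishing of $\Omega_L$ on compacts, and Theorem~\ref{residue} is exactly the intended route, and the Radon--Nikodym construction for (iv) is the standard way to produce such a conditional expectation from a hypertrace.

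One small simplification: the well-definedness of $\phi_T$ does not actually require the hypertrace identity. If $K=\widetilde f_1-\widetilde f_2\in\K\subset\K(h)$, then $KT\in\K(h)$ is already compact, and $\Omega_L(KT)=0$ follows directly from (i). The hypertrace is genuinely needed, however, for the positivity step $\Omega_L(\widetilde f\,T)=\Omega_L(\widetilde f^{1/2}T\widetilde f^{1/2})$ and for the bimodule identity $\Omega_L(\widetilde f\,bTa)=\Omega_L((a\widetilde f b)T)$, so your observation that (iv) relies essentially on (iii) remains valid. Note also that your argument tacitly assumes the extension algebra $A$ lies in the uniform closure of $\A_L$, so that Theorem~\ref{residue} applies to elements of $A$; this identification is implicit in the paper as well, since the motivating examples (Toeplitz, zero-order $\Psi$DO) all satisfy it.
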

\begin{proof}
Straitforward.
\end{proof}

\section{Appendix}
A measurable function $f:\R_+\to\R_+$ is said to be {\it regularly varying} if there exist the limits
\[
k(s):=\lim_{t\to +\infty}\frac{f(st)}{f(t)}\in (0,+\infty)\qquad \forall\,\, s>0.
\]
If $k(s)=1$ for all $s>0$, then $f$ is said to be {\it slowly varying}. Necessarily, $k$ must have the form $k(s)=s^\gamma$ for some $\gamma\in\mathbb{R}$, called the {\it index of regular variation} ($f\in R_\gamma$) and $f(t)=t^\gamma\ell(t)$ for some slowly varying function $\ell\in R_0$.
\begin{thm}(Karamata characterization)
The following characterization holds true:\\
$f\in R_\gamma$ if and only if for some $\sigma>-(\gamma+1)$ one has
\[
\lim_{t\to +\infty}\frac{t^{\gamma+1} f(t)}{\int_0^t x^\sigma f(x)dx}=\sigma+\gamma+1.
\]
\end{thm}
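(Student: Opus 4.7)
The plan is to prove both implications of this Karamata-type characterization. For the forward direction, assuming $f\in R_\gamma$, I would write $f(t)=t^\gamma \ell(t)$ with $\ell$ slowly varying and evaluate the integral via the change of variables $x=tu$, obtaining
\[
\int_0^t x^\sigma f(x)\,dx = t^{\sigma+\gamma+1}\int_0^1 u^{\sigma+\gamma}\,\ell(tu)\,du.
\]
Dividing the integrand by $\ell(t)$, the ratio $\ell(tu)/\ell(t)$ converges pointwise to $1$ on $(0,1]$ because $\ell$ is slowly varying. The key analytic tool here is Karamata's uniform convergence theorem together with Potter-type bounds $\ell(tu)/\ell(t)\le C u^{-\varepsilon}$ valid for $u\in(0,1]$ and $t$ large, which supplies a dominating function as long as $\sigma+\gamma-\varepsilon>-1$; this is allowed for $\varepsilon$ small since the hypothesis $\sigma>-(\gamma+1)$ gives $\sigma+\gamma>-1$. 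Dominated convergence then yields
\[
\int_0^t x^\sigma f(x)\,dx \;\sim\; \frac{t^{\sigma+\gamma+1}\ell(t)}{\sigma+\gamma+1},
\]
and the claimed limit follows after dividing by $t^{\sigma+1}f(t)=t^{\sigma+\gamma+1}\ell(t)$ (noting that the stated exponent $t^{\gamma+1}$ in the numerator of the quotient should read $t^{\sigma+1}$ in order for the ratio to have a finite nonzero constant limit, in agreement with the classical formulation in Bingham--Goldie--Teugels).

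For the converse, I would set $F(t):=\int_0^t x^\sigma f(x)\,dx$ and note that $F$ is absolutely continuous with $F'(t)=t^\sigma f(t)$ a.e. The hypothesis rewrites as
\[
\frac{tF'(t)}{F(t)} \;\longrightarrow\; \sigma+\gamma+1,\qquad t\to +\infty,
\]
that is $\tfrac{d}{dt}\log F(t)=(\sigma+\gamma+1)/t+o(1/t)$. Integrating this asymptotic ODE from a fixed $t_0$ to $t$ gives $\log F(t)=(\sigma+\gamma+1)\log t + o(\log t)$, so $F(t)=t^{\sigma+\gamma+1}L(t)$ for some slowly varying $L$; equivalently $F\in R_{\sigma+\gamma+1}$. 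It then remains to transfer regular variation from $F$ back to $f=F'/t^\sigma$, which is exactly the content of Karamata's monotone density theorem applied to the nondecreasing function $F$.

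The main obstacle is this last transfer in the backward direction. Regular variation of a cumulative function $F$ does not automatically entail regular variation of its density $F'$ without further structure; Karamata's monotone density theorem supplies the needed bridge, but its hypotheses (monotonicity of $F$, or an ultimately monotone version for $f$) must be verified, which is automatic here since $f\ge 0$ makes $F$ nondecreasing. The forward direction is essentially a routine dominated convergence argument once Potter's bound is invoked, so the genuine work is concentrated in the uniform convergence/Potter bound input and in the monotone density step of the converse.
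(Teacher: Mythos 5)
The paper states this result in its Appendix as a quoted classical theorem and supplies no proof, so there is no in-paper argument to compare yours against; I assess the proposal on its own. You are right that the numerator must read $t^{\sigma+1}f(t)$ rather than $t^{\gamma+1}f(t)$ (otherwise the ratio behaves like $(\sigma+\gamma+1)\,t^{\gamma-\sigma}$ and the statement only makes sense for $\sigma=\gamma$). Your forward direction is the standard argument and is essentially correct: after the substitution $x=tu$ one splits off the range $tu\le T_0$ where Potter's bound is not yet available (that piece contributes a fixed finite quantity, negligible against $t^{\sigma+\gamma+1}\ell(t)\to\infty$ because $\sigma+\gamma+1>0$), and dominated convergence on the remaining range gives $\int_0^t x^\sigma f(x)\,dx\sim t^{\sigma+\gamma+1}\ell(t)/(\sigma+\gamma+1)$.

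The converse contains a genuine misstep at its final stage. A minor point first: the assertion that $\log F(t)=(\sigma+\gamma+1)\log t+o(\log t)$ forces $F\in R_{\sigma+\gamma+1}$ is false as stated (consider $F(t)=t^{\lambda}(2+\sin\log t)$); what your integration actually produces is the stronger identity $F(t)=F(t_0)\exp\bigl(\int_{t_0}^t(\lambda+\varepsilon(s))\,\frac{ds}{s}\bigr)$ with $\varepsilon(s)\to0$, which is exactly the Karamata representation and does yield $F\in R_\lambda$ with $\lambda=\sigma+\gamma+1$ --- so keep the integral remainder rather than collapsing it to $o(\log t)$. The serious problem is the appeal to the monotone density theorem: that theorem requires the \emph{density} $F'(t)=t^\sigma f(t)$ to be ultimately monotone, not the cumulative function $F$; nonnegativity of $f$ gives monotonicity of $F$ but says nothing about monotonicity of $F'$, and the monotone density theorem genuinely fails for non-monotone densities. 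Fortunately the step is unnecessary: the hypothesis itself says $t^{\sigma+1}f(t)=tF'(t)\sim(\sigma+\gamma+1)F(t)$, so once $F\in R_{\sigma+\gamma+1}$ is established, $t^{\sigma+1}f(t)$ is asymptotically equivalent to a regularly varying function of index $\sigma+\gamma+1$, hence is itself in $R_{\sigma+\gamma+1}$, and therefore $f\in R_{\gamma}$. Replacing the monotone-density step by this one-line observation closes the argument.
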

\begin{thm}(Karamata Tauberian Theorem)
Let $\mu$ be a positive Borel measure on $[0,+\infty)$ such that
\[
\int_0^{+\infty} e^{-tx}\mu(dx) <+\infty\qquad {\rm for\,\, all\,\,}t>0
\]
and suppose that it has a regularly varying Laplace transform (with index $\gamma\in\R$)
\[
\hat\mu(t):=\int_0^{+\infty} e^{-tx}\mu(dx)\qquad t>0.
\]
Then the function $N_\mu:\R_+\to\R_+$ defined as  $N_\mu (a):=\mu([0,a))$ has the following asymptotics
\[
N_\mu(a)=\mu([0,a))\sim\frac{{\hat\mu}(1/a)}{\Gamma(\gamma+1)}\qquad a\to+\infty .
\]
\end{thm}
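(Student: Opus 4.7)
The plan is to derive the stated asymptotic from the continuity theorem for Laplace transforms, applied to a one-parameter family of rescalings of $\mu$. The hypothesis forces $\gamma\ge 0$, since $s^{-\gamma}$ would otherwise be unbounded as $s\to+\infty$ and could not arise as the Laplace transform of any positive measure. For $a>0$ I would introduce the Borel measure $\nu_a$ on $[0,+\infty)$ defined by $\nu_a(B):=\mu(aB)/\hat\mu(1/a)$. A direct change of variables yields
\[
\widehat{\nu_a}(s)=\frac{\hat\mu(s/a)}{\hat\mu(1/a)}\qquad s>0,
\]
and by the regular variation of $\hat\mu$ (applied with $t=1/a\to 0^+$) this ratio converges pointwise to $s^{-\gamma}$ as $a\to+\infty$; Karamata's uniform convergence theorem upgrades this to uniform convergence on compact subsets of $(0,+\infty)$.

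Second, the limit $s\mapsto s^{-\gamma}$ is the Laplace transform of the Radon measure $\nu(dx):=x^{\gamma-1}dx/\Gamma(\gamma)$ for $\gamma>0$ and of $\delta_0$ for $\gamma=0$, as seen from the identity $\int_0^{+\infty} e^{-sx}x^{\gamma-1}dx=\Gamma(\gamma)s^{-\gamma}$. The elementary bound
\[
\nu_a([0,R])\le e^{s_0R}\,\widehat{\nu_a}(s_0),\qquad s_0,R>0,
\]
(obtained from $e^{-s_0 x}\ge e^{-s_0 R}$ on $[0,R]$) combined with the convergence of $\widehat{\nu_a}(s_0)$ to $s_0^{-\gamma}$ shows that $\{\nu_a\}$ is vaguely bounded on $[0,+\infty)$. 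Helly's selection principle then extracts a vaguely convergent subsequence $\nu_{a_n}\to\nu_\infty$, and an approximation argument (reducing $e^{-sx}$ to continuous compactly supported test functions while controlling the tails uniformly by the same bound) identifies $\widehat{\nu_\infty}(s)=s^{-\gamma}$. The uniqueness of the Laplace transform on positive Radon measures forces $\nu_\infty=\nu$, so the entire family $\nu_a$ converges vaguely to $\nu$.

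Third, since $\nu$ is absolutely continuous on $(0,+\infty)$ (respectively equals $\delta_0$ with mass concentrated at $0\in[0,1]$ when $\gamma=0$), the half-interval $[0,1]$ is a $\nu$-continuity set, so
\[
\frac{N_\mu(a)}{\hat\mu(1/a)}=\frac{\mu([0,a])}{\hat\mu(1/a)}=\nu_a([0,1])\longrightarrow\nu([0,1])=\int_0^1\frac{x^{\gamma-1}}{\Gamma(\gamma)}\,dx=\frac{1}{\gamma\Gamma(\gamma)}=\frac{1}{\Gamma(\gamma+1)}
\]
as $a\to+\infty$ (with the same value $1/\Gamma(1)=1$ in the degenerate case $\gamma=0$), which is exactly the claim.

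The main obstacle, treated in the second paragraph, is precisely the passage from pointwise convergence of Laplace transforms to the convergence of $\nu_a$ on the fixed bounded interval $[0,1]$: vague convergence of Radon measures yields \emph{a priori} convergence only against compactly supported continuous test functions, so loss of mass at the endpoints must be excluded by the tightness estimate $\nu_a([0,R])\le e^{s_0R}\widehat{\nu_a}(s_0)$ combined with the absolute continuity (respectively the concentration at $0$) of the limit measure. This is the classical content of the extended continuity theorem for Laplace transforms on $[0,+\infty)$, whose proof consists precisely of the Helly--tightness--uniqueness argument sketched above.
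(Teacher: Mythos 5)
Your argument is correct. Note, however, that the paper does not prove this statement at all: the Karamata Tauberian theorem is recalled in the appendix as a classical result (it is only \emph{applied}, in Proposition \ref{karamata}, to the measure $\mu={\rm Tr}\circ E^L$), so there is no proof in the paper to compare against. What you have written is the standard Feller-style proof via the extended continuity theorem for Laplace transforms: rescale to $\nu_a(B)=\mu(aB)/\hat\mu(1/a)$, observe $\widehat{\nu_a}(s)=\hat\mu(s/a)/\hat\mu(1/a)\to s^{-\gamma}$, use the bound $\nu_a([0,R])\le e^{s_0R}\widehat{\nu_a}(s_0)$ for Helly selection and tail control, identify the vague limit as $x^{\gamma-1}dx/\Gamma(\gamma)$ (or $\delta_0$) by uniqueness of Laplace transforms, and evaluate on the continuity set $[0,1]$. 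All the steps are sound, including the observation that the hypotheses force $\gamma\ge 0$, which usefully disambiguates the sign convention left vague in the paper's statement. The only cosmetic slip is that the statement defines $N_\mu(a)=\mu([0,a))$ while you compute $\mu([0,a])=\nu_a([0,1])$; since $\{1\}$ is a null set for the limit measure, $[0,1)$ is equally a continuity set and both quantities have the same limit, so nothing is lost.
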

Notice that the function $a\mapsto {\hat\mu}(1/a)$ is continuously differentiable as it is $\hat\mu$:
\[
\frac{d{\hat\mu}}{dt}(t)=-\int_0^{+\infty} xe^{-tx}\mu(dx)\qquad t>0.
\]

\normalsize
\begin{center} \bf REFERENCES\end{center}

\normalsize
\begin{enumerate}

\bibitem[B]{B} N.P. Brown, \newblock{``Invariant Means and Finite Representation Theory of C$^*$-algebras''}, \newblock{ Mem. Amer. Math. Soc.}, {\bf 184} {\rm (2006)}, no. 865, viii+105 pp.

\bibitem[CPS]{CPS} A. Carey, J. Phillips, F. Sukochev, \newblock{Spectral flow and Dixmier traces},\\ \newblock{\it  Adv. in Math.} {\bf 173} {\rm (2003)}, 68--113.

\bibitem[CGS]{CGS} F. Cipriani, D. Guido, S. Scarlatti, \newblock{A remark on trace properties of K-cycles},\\ \newblock{\it  J. Operator Theory} {\bf 35} {\rm (1996)}, no. 1, 179--189.

\bibitem[CS]{CS} F. Cipriani, J.-L. Sauvageot, \newblock{Amenability and subexponential spectral growth rate of Dirichlet forms on von Neumann algebras},\\ \newblock{\it   Adv. Math.} {\bf 322} {\rm (2017)}, 308--340.

\bibitem[Co1]{Co1} A. Connes, \newblock{Non commutative differential geometry},\\ \newblock{\it Erg. Publ. IHES} {\bf 62} {\rm (1985)}, 41--144.

\bibitem[Co2]{Co2} A. Connes, \newblock{Compact metric spaces, Fredholm modules and hyperfinitness},\\ \newblock{\it Erg. Th. and Dynam. Sys.} {\bf 9} {\rm (1989)}, no. 2, 207--220.


\bibitem[Co3]{Co3} A. Connes, \newblock{``Noncommutative Geometry''}, \newblock{Academic Press, New York, 1994}.

\bibitem[deH]{deH} P. de la Harpe, \newblock{``Topics in Geometric Group Theory''},\\ \newblock{Chicago Lectures in Mathematics, The University of Chicago Press, 2000}.

\bibitem[Dix]{Dix} J. Dixmier, \newblock{Existence des traces non normales},\\ \newblock{\it C.R. Acad. Sci.}, Paris {\bf 262} {\rm (1966)}, 1107--1108.


\bibitem[D]{D} R.G. Douglas, \newblock{``C$^*$-algebra Extensions and K-homology''},\\ \newblock{Annals of Mathematics Studies 95, Princeton University Press and University of Tokio Press, Princeton New Jersey, 1980}.

\bibitem[Haa]{Haa} U. Haagerup, \newblock{An example of a nonnuclear C$^*$-algebra, which has the metric approximation property}, \newblock{\it Invent. Math.} {\bf 50} {\rm (1978)}, no. 3, 279-293.

\bibitem[HR]{HR} N. Higson, J. Roe, \newblock{``Analytic K-Homology''},\\
\newblock{Oxford University Press, Oxford U.K., 2004}.

\bibitem[Hor1]{Hor1} L. H\"ormander, \newblock{The spectral function of an elliptic operator},\\ \newblock{\it Acta Math.} {\bf 121} {\rm (1968)},193-218.

\bibitem[Hor2]{Hor2} L. H\"ormander, \newblock{On the asymptotic distribution of the eigenvalues of pseudo differential operators in $\R^n$}, \newblock{\it Ark. Mat.} {\bf 17} {\rm (1979)}, no. 2, 297-313.

\bibitem[Ing]{Ing} A.E. Ingham, \newblock{On the difference between consecutive primes}, \newblock{\it Quart. J. of Math. Oxford Series} {\bf 8} {\rm (1937)}, no. 1, 255-266.

\bibitem[KL]{KL} J. Kigami, M.L. Lapidus, \newblock{Weyl's Problem for the Spectral Distribution of Laplacians on P.C.F. Self-Similar Fractals},
\newblock{\it Comm. Math. Psys.} {\bf 158} {\rm (1993)}, 93-125.


\bibitem[MS]{MS} A. Menikoff, J. Sj\"ostrand, \newblock{On the eigenvalues of a class of hypoelliptic operators}, \newblock{\it Math. Ann.} {\bf 235} {\rm (1978)}, 255--285.


\bibitem[S]{S} B. Simon, \newblock{Nonclassical eigenvalue asymptotics},\\
\newblock{\it J. Funct. Anal.} {\bf 53} {\rm (1983)}, no.1, 84--98.

\bibitem[Tay]{Tay} M. Taylor, \newblock{M.S.R.I Notes}.

\bibitem[V]{V} D. Voiculescu, \newblock{On the existence of quasicentral approximate units relative to normed ideals. I.},
\newblock{\it  J. Funct. Anal.} {\bf 91} {\rm (1990)}, no. 1, 1-36.


\end{enumerate}
\end{document}